\newcommand{\kk}{{\mathsf{k}}}
\newcommand{\RR}{{\mathbb{R}}}
\newcommand{\ZZ}{{\mathbb{Z}}}
\newcommand{\CE}{{\mathcal{E}}}
\newcommand{\CF}{{\mathcal{F}}}
\newcommand{\CG}{{\mathcal{G}}}
\newcommand{\CL}{{\mathcal{L}}}
\newcommand{\CO}{{\mathcal{O}}}
\newcommand{\CT}{{\mathcal{T}}}
\newcommand{\CU}{{\mathcal{U}}}
\newcommand{\Gr}{{\mathsf{Gr}}}
\newcommand{\OGr}{{\mathsf{OGr}}}
\newcommand{\IGr}{{\mathsf{IGr}}}
\newcommand{\LGr}{{\mathsf{LGr}}}
\newcommand{\GL}{{\mathsf{GL}}}
\newcommand{\SP}{{\mathsf{Sp}}}
\newcommand{\sfG}{\mathsf{G}}
\newcommand{\sfP}{\mathsf{P}}
\newcommand{\sfT}{\mathsf{T}}
\newcommand{\sfB}{\mathsf{B}}
\newcommand{\YD}{{\mathrm{Y}}}
\newcommand{\YDt}{{\mathrm{\tilde Y}}}
\newcommand{\rmP}{\mathrm{P}}
\newcommand{\BD}{\mathrm{B}}
\newcommand{\SO}{{\mathsf{SO}}}
\newcommand{\RB}{\mathrm{RB}}
\newcommand{\DB}{\mathrm{DB}}
\newcommand{\CUpU}{\CU^\perp\!/\,\CU}
\newcommand{\HH}{\mathrm{HH}}
\newcommand{\rmR}{\mathrm{R}}
\newcommand{\rmS}{\mathrm{S}}
\newcommand{\omegat}{\tilde{\omega}}
\DeclareMathOperator{\dht}{h}
\DeclareMathOperator{\dwd}{w}
\theoremstyle{plain}
\newtheorem{theo}{Theorem}[]
\newtheorem{cor}[theo]{Corollary}
\newtheorem{theorem}{Theorem}[section]
\newtheorem{conjecture}[theorem]{Conjecture}
\newtheorem{question}[theorem]{Question}
\newtheorem{lemma}[theorem]{Lemma}
\newtheorem{proposition}[theorem]{Proposition}
\theoremstyle{definition}
\newtheorem{definition}[theorem]{Definition}
\theoremstyle{remark}
\newtheorem{remark}[theorem]{Remark}
\newtheorem{example}[theorem]{Example}
\title{Hochschild Cohomology of Isotropic Grassmannians}
\author{Anton Fonarev}
\address{\sloppy
\parbox{0.95\textwidth}{
Algebraic Geometry Section, Steklov Mathematical Institute of Russian Academy of Sciences,\\
8 Gubkin str., Moscow 119991 Russia
\hfill
}\bigskip}
\email{avfonarev@mi-ras.ru}
\date{}
\thanks{This work was supported by the Russian Science Foundation grant No. 24-71-10092, \url{https://rscf.ru/en/project/24-71-10092/}.}
\begin{document}

\begin{abstract}
    We prove that nonspecial isotropic Grassmannians---that is,
    all isotropic Grassmannians which are neither
    (co)adjoint nor (co)minuscule, except $\mathsf{OGr}(n-1, 2n+1)$
    for $n\geq 4$---are not Hochschild global, thus establishing
    a conjecture by P.~Belmans and M.~Smirnov. As a corollary,
    we conclude that Bott vanishing fails for all these varieties. 
\end{abstract}

\maketitle

\section{Introduction}

\subsection{Hochschild cohomology}
Hochschild cohomology is an important homological invariant, which is closely related
to deformation theory.
Originally introduced by Gerhard Hochschild for associative
algebras over a field, it was later generalized to broader contexts.
In the present paper, we are interested in Hochschild cohomology
of isotropic Grassmannians. Though one can define Hochschild cohomology
of an algebraic variety in a more conceptual way, we are going
to take a shortcut, made possible by the so-called Hochschild--Kostant--Rosenberg
decomposition. From now on, we will only deal with smooth algebraic varieties over a field
of characteristic 0.

\begin{definition}
    Let $X$ be a smooth algebraic variety over a field of characteristic 0.
    Its $l$-th \emph{Hochschild cohomology} group is defined as
    \begin{equation}\label{eq:hkr}
        \HH^l(X) = \bigoplus_{i+j=l} H^i(X, \Lambda^j\CT_X),
    \end{equation}
    where $\CT_X$ denotes the tangent bundle of $X$.
\end{definition}

It follows immediately from the definition that if $X$ is of dimension $d$,
then $\HH^l(X) = 0$ for $l < 0$ and $l > 2d$. Indeed, negative exterior powers
and sheaf cohomology vanish by convention, and a theorem of Grothendieck
guarantees that sheaf cohomology of any sheaf of abelian groups vanishes
in degrees greater than the dimension of the space ($\Lambda^j\CT_X$ vanishes for $j>d$ since $\CT_X$ is a vector bundle of rank $d$).

Belmans and Smirnov gave the following definition in~\cite{Belmans2023}.
\begin{definition}
    A smooth algebraic variety $X$ is called \emph{Hochschild global} if
    \begin{equation*}
        H^i(X, \Lambda^j\CT_X) = 0 \text{ for all } j \text{ and } i > 0.
    \end{equation*}
    In other words, $X$ is Hochschild global if $\HH^l(X)=H^0(X, \Lambda^l\CT_X)$ for all $l\in\ZZ$.
\end{definition}

Hochschild globality of (co)adjoint and (co)minuscule isotropic Grassmannians
was established in~\cite{Belmans2023}, and the authors conjectured that
those isotropic Grassmannians which are neither (co)adjoint nor (co)minuscule
are not Hochschild global. In the present paper, we give a proof
of a weak form of this conjecture (see Section~\ref{ssec:main} for a precise statement).

\subsection{Isotropic Grassmannians}
Let $\kk$ be an algebraically closed field of characteristic 0, and let
$\sfG$ be a connected simply-connected simple algebraic group over $\kk$
of type $B$, $C$, or $D$. Fix a maximal torus $\sfT\subset \sfG$.
Denote by $\rmR$ the corresponding root system. Fix a subset
$\rmR^+\subset \rmR$ of positive roots, and denote by $\sfB$ the Borel
subgroup associated with the set of negative roots $-\rmR^+$.
With any subset $I\subset \rmS$ of the set of simple roots $\rmS\subset\rmR$
we associate the standard parabolic subgroup $\sfP_I$ containing $\sfB$.
In the present paper, we are interested in Hochschild cohomology of
\emph{isotropic Grassmannians},
i.e., varieties isomorphic to $\sfG/\sfP_i$, where $\sfP_i$ is the subgroup
associated with the set $S\setminus \{\alpha_i\}$, and $\alpha_i$ is
the $i$-th simple root (we use the Bourbaki labelling, see Table~\ref{tb:roots}).

\begin{table}[h]
    \caption{Bourbaki labelling of simple roots}
    \centering
    \begin{tabular}{m{1cm} m{4cm}} 
        type & Dynkin diagram \\
        \hline
        $B_n$ & \begin{tikzpicture}
        \dynkin[labels*={1,2,3, n-1, n}, scale=2, Bourbaki arrow] B{ooo...oo}
        \end{tikzpicture} \\
        $C_n$ & \begin{tikzpicture}
        \dynkin[labels*={1,2,3, n-1, n}, scale=2, Bourbaki arrow] C{ooo...oo}
        \end{tikzpicture} \\
        $D_n$ & \begin{tikzpicture}
        \dynkin[labels={1,2,n-3,n-2,n-1,n}, label directions={above,above,above,right,,}, scale=2, Bourbaki arrow] D{oo...oooo}
        \end{tikzpicture} \\
        \hline
    \end{tabular}
    \label{tb:roots}
\end{table}

Let $V$ be a finite-dimensional vector space over the field $\kk$ equipped with a non-degenerate bilinear
form $\omega\in V^*\otimes V^*$,
which we assume to be symmetric when $\dim V$ is odd and symmetric or antisymmetric when $\dim V$ is even.
If $\omega$ is antisymmetric, denote by $\IGr(k, V)$ the Grassmannian of $k$-dimensional isotropic subspaces in $V$.
If $\omega$ is symmetric, denote by $\OGr(k, V)$ the Grassmannian of $k$-dimensional isotropic subspaces in $V$.
If $\dim V=2n$ and $\omega$ is symmetric, then $\OGr(n, V)$ has two isomorphic connected components.
We pick one of them and denote it by $\OGr_+(n, V)$. When we do not want to specify $V$, we simply write its
dimension as the second parameter, e.g., $\IGr(k, 2n)$. Isotropic Grassmannians have the following geometric
description, see Table~\ref{tb:gr}.

\begin{table}[h]
    \caption{Isotropic Grassmannians}
    \centering
    \begin{tabular}{c l} 
        type & isotropic Grassmannians \\
        \hline
        $B_n$ & $\sfG/\sfP_k\simeq \OGr(k, 2n+1) \text{ for } k=1, \ldots, n $ \\
        $C_n$ & $\sfG/\sfP_k\simeq \IGr(k, 2n) \text{ for } k=1, \ldots, n $\\
        $D_n$ & $\sfG/\sfP_k\simeq \begin{cases}
        \OGr(k, 2n) &  \text{for } k=1, \ldots, n-2  \\
        \OGr_+(n, 2n) &  \text{for } k=n-1,n  \\
        \end{cases}$ \\
        \hline
    \end{tabular}
    \label{tb:gr}
\end{table}

The simple roots $\rmS$ form a basis of the space of weights.
We denote by $\alpha^\vee$ the coroot corresponding to the root $\alpha$.
Using the standard invariant scalar product $(-, -)$ on the space of weights of $\sfG$,
(see~\cite{Bourbaki}), we identify the space of weights of $\sfG$ and the space of coweights
in a way that $\alpha^\vee = \frac{2}{(\alpha, \alpha)}\alpha$ for all roots $\alpha$.

\begin{definition}
    A dominant weight $\lambda$ of $\sfG$ is called
    \begin{enumerate}
        \item \emph{minuscule} if $(\lambda, \alpha^\vee)\leq 1$ for all $\alpha\in\rmR^+$;
        \item \emph{cominuscule} if $(\alpha, \lambda^\vee)\leq 1$ for all $\alpha\in\rmR^+$;
        \item \emph{adjoint} if $\lambda$ is the highest weight of the adjoint representation of $\sfG$;
        \item \emph{coadjoint} if $\lambda$ is the highest short root. 
    \end{enumerate}
\end{definition}

\begin{definition}
    Let $X=\sfG/\sfP$ be an isotropic Grassmannian, and let $\omega$ be the highest weight associated with $\sfP_i$.
    The Grassmannian $X$ is called \emph{(co)minuscule} (resp. \emph{(co)adjoint}) if $\omega$
    is (co)minuscule (resp. (co)adjoint).
\end{definition}

\begin{definition}
    For the sake of the present paper we introduce the following terms.
    \begin{itemize}
        \item Call an isotropic Grassmannian \emph{special} if it is either
        (co)minuscule or (co)adjoint.
        \item Call the Grassmannians $\OGr(n-1, 2n+1)$ for $n\geq 4$ \emph{curious}.
        \item Call the Grassmannians which are neither special nor curious \emph{nonspecial}.
    \end{itemize}
\end{definition}

In Table~\ref{tb:good} we list all (co)minuscule and (co)adjoint isotropic Grassmannians.

\begin{table}[h]
    \caption{(Co)minuscule and (co)adjoint isotropic Grassmannians}
    \centering
    \begin{tabular}{c l l l l} 
        type & minuscule & cominuscule & adjoint & coadjoint \\
        \hline
        $B_n$ & $\OGr(n, 2n+1)$ & $\OGr(1, 2n+1)$ & $\OGr(2, 2n+1)$ & $\OGr(1, 2n+1)$ \\
        $C_n$ & $\IGr(1, 2n)$ & $\IGr(n, 2n)$ & $\IGr(1, 2n)$ & $\IGr(2, 2n)$ \\
        $D_n$ & $\OGr(1, 2n)$, $\OGr_+(n, 2n)$ & $\OGr(1, 2n)$, $\OGr_+(n, 2n)$
        & $\OGr(2, 2n)$ if $n\geq 4$ & $\OGr(2, 2n)$ if $n\geq 4$ \\
        \hline
    \end{tabular}
    \label{tb:good}
\end{table}

In view of our previous discussion, nonspecial isotropic Grassmannians are precisely the following varieties:
\begin{itemize}
    \item $\OGr(k, 2n+1)$ for all $3 \leq k \leq n-2$;
    \item $\IGr(k, 2n)$ for all $3 \leq k \leq n-1$;
    \item $\OGr(k, 2n)$ for all $3 \leq k \leq n-2$.
\end{itemize}

\subsection{Hochschild cohomology of isotropic Grassmannians}
Since the present work arose as an answer to a question from~\cite{Belmans2023},
we briefly recall its main results.
First, the authors prove the following theorem.

\begin{theorem}[{\cite[Theorem~A]{Belmans2023}}]\label{thm:bel-a}
    Let $X$ be a (co)minuscule or a (co)adjoint isotropic Grassmannian.
    Then $X$ is Hochschild global; that is,
    \begin{equation*}
        H^i(X, \Lambda^j\CT_X) = 0 \text{ for all } i>0 \text{ and all } j.
    \end{equation*}
\end{theorem}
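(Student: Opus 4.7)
My plan is to exploit the homogeneity of $X=\sfG/\sfP$: the bundle $\Lambda^j\CT_X$ is induced from the $\sfP$-representation $\Lambda^j(\mathfrak{g}/\mathfrak{p})$, so its cohomology can be computed via the Borel--Weil--Bott theorem after decomposing this representation into irreducibles for the Levi subgroup $\sfL\subset\sfP$.

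The first step is to analyze the $\sfL$-module structure of $\mathfrak{g}/\mathfrak{p}$ using the grading of $\mathfrak{g}$ attached to the fundamental coweight $\omega_i^\vee$ (where $\sfP=\sfP_i$). In the cominuscule case the grading is three-step, the unipotent radical is abelian, and $\mathfrak{g}/\mathfrak{p}$ is an irreducible $\sfL$-module. In the minuscule case in type $B_n$ (the spinor variety $\OGr(n,2n+1)$) and in the (co)adjoint cases the grading has five layers, with a one-dimensional outermost piece in the latter. In each case $\Lambda^j(\mathfrak{g}/\mathfrak{p})$ inherits a short $\sfP$-filtration whose associated graded splits into irreducible $\sfL$-modules via classical plethysm of the individual layers.

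The second step is to apply Borel--Weil--Bott to each irreducible summand $V_\lambda$ appearing in $\mathrm{gr}\,\Lambda^j(\mathfrak{g}/\mathfrak{p})$: I would prove that the shifted weight $\lambda+\rho$ is already regular dominant, so that the cohomology of the corresponding equivariant bundle sits in degree $0$. Since all associated graded pieces then contribute only to $H^0$, the cohomology spectral sequence of the $\sfP$-filtration degenerates and $H^{>0}(X,\Lambda^j\CT_X)=0$ as required.

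The main obstacle is the combinatorial verification that $\lambda+\rho$ stays strictly inside the dominant chamber for every $V_\lambda$ produced in Step~1. Concretely, this amounts to lower bounds on $\langle\lambda+\rho,\alpha_k^\vee\rangle$ for each simple root $\alpha_k\neq\alpha_i$, and must be checked uniformly in the rank across all four classes of parabolics in types $B$, $C$, and $D$. The fact that the weights of $\Lambda^\bullet(\mathfrak{g}/\mathfrak{p})$ are sums of distinct positive roots outside $\mathfrak{l}$ is what ultimately makes the bound tight enough to succeed in exactly the (co)minuscule and (co)adjoint cases, and any proof has to identify precisely the structural feature that fails on the curious Grassmannians $\OGr(n-1,2n+1)$.
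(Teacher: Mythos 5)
First, a point of reference: this paper does not prove Theorem~\ref{thm:bel-a} at all --- it is quoted from \cite{Belmans2023}, and the closest internal analogue of your strategy is the machinery of Section~\ref{s:com} (the filtration~\eqref{eq:ilambda}/\eqref{eq:olambda}, the plethysm lemmas, and the Borel--Bott--Weil reformulation in Theorem~\ref{thm:bbw}), which the author deploys in the opposite direction, to produce \emph{non}vanishing for nonspecial Grassmannians. Your overall plan --- filter $\Lambda^j(\mathfrak{g}/\mathfrak{p})$ by the $\ZZ$-grading attached to the marked node, decompose the graded pieces into irreducible $\sfL$-modules by plethysm, apply Borel--Bott--Weil, and note that if every $E_1$-term lives in cohomological degree $0$ then $H^{>0}(X,\Lambda^j\CT_X)=0$ irrespective of the differentials --- is exactly the standard route and is in the spirit of both the cited proof and this paper's Section~\ref{s:c}--\ref{s:bd}.

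However, your pivotal Step~2 is false as stated. It is not true that every irreducible summand $V_\lambda$ of $\gr\Lambda^j(\mathfrak{g}/\mathfrak{p})$ has $\lambda+\rho$ regular dominant: already on the coadjoint Grassmannian $\IGr(2,2n)$ the piece $\Lambda^2\big((\CUpU)\otimes\CU^*\big)\subset\gr\Lambda^2\CT$ contains the summand $(\CUpU)^{\langle 2\rangle}\otimes\Sigma^{(1,1)}\CU^*$, whose shifted weight $\rho+(1,1,2,0,\ldots,0)=(n+1,n,n,\ldots)$ is \emph{singular}. What is true, and what you must prove instead, is the dichotomy that every summand is either Borel--Bott--Weil--singular (hence acyclic) or has dominant shifted weight (hence contributes only to $H^0$); that dichotomy is precisely where the combinatorial work lies, and it is the analogue of Lemmas~\ref{lm:iab00}--\ref{lm:iab0} here. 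In the special cases it does hold (for $k\le 2$ the constraints $\beta_1\le\lambda_1\le k$ from Lemma~\ref{lm:res} and $\alpha\supseteq\lambda^T$ from Littlewood--Richardson rule out any inversion in $\rho+(\alpha,\beta)$, and for the (co)minuscule maximal parabolics the relevant bundles are $\Sigma^\mu\CU^*$ with $\mu$ dominant), so your architecture can be repaired; but as written, an attempt to verify ``regular dominance'' for all summands would fail at the first nontrivial example. Two smaller inaccuracies: the outermost graded piece $\mathfrak{g}_2$ is one-dimensional only in the adjoint cases --- for coadjoint $\IGr(2,2n)$ it is $S^2\kk^2$, of dimension $3$ --- and the theorem makes no claim about the curious Grassmannians $\OGr(n-1,2n+1)$, so a proof of Theorem~\ref{thm:bel-a} need not ``identify the structural feature that fails'' there; their Hochschild globality is left open (see the Question in the Introduction).
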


\begin{remark}
    All classical Grassmannians $\Gr(k, V)$, associated with $\GL(V)$,
    are both minuscule and cominuscule (the two notions coincide in the
    simply-laced cases).
    Theorem~\ref{thm:bel-a} covers these varieties as well.
\end{remark}

\begin{remark}
    Belmans and Smirnov gave a representation theoretic description of
    $\HH^i(X)=\Gamma(X, \Lambda^i\CT_X)$ for special Grassmannians.
    We refer the interested reader to \cite[Theorems~B~and~C]{Belmans2023}
    for the corresponding results.
\end{remark}

The following conjecture formulates the problem that we address in the present
paper.

\begin{conjecture}[{\cite[Conjecture~F]{Belmans2023}}]\label{conj:bel}
    Let $X$ be an isotropic Grassmannian which is neither (co)minuscule
    nor (co)adjoint. Then $X$ is not Hochschild
    global. That is, for some $i > 0$ and some $j$ one has
    \begin{equation*}
        H^i(X, \Lambda^j\CT_X) \neq 0.
    \end{equation*}
\end{conjecture}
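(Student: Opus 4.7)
The plan is to apply Borel--Bott--Weil to carefully chosen irreducible summands of $\Lambda^j\CT_X$. Writing $X = \sfG/\sfP_k$ with Levi subgroup $\sfL \cong \GL_k \times \mathsf{M}$, where $\mathsf{M}$ is the smaller-rank classical group of the same type, the nilpotent radical of $\sfP_k$ splits $\sfL$-equivariantly, giving
\[
    \CT_X \cong \CU^* \otimes \CUpU \;\oplus\; B, \qquad B = \begin{cases} S^2\CU^* & \text{in type } C, \\ \Lambda^2\CU^* & \text{in types } B, D. \end{cases}
\]
The Cauchy identity for $\Lambda^p(\CU^* \otimes \CUpU)$, the standard plethysms for $\Lambda^q B$, and Littlewood's modification rule for the symplectic or orthogonal structure on $\CUpU$ together decompose $\Lambda^j\CT_X$ into irreducible homogeneous summands of the form $E_{\gamma,\nu} = S_\gamma\CU^* \otimes \Sigma_{[\nu]}(\CUpU)$.

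We then search for a pair $(\gamma,\nu)$ with the following three properties: $E_{\gamma,\nu}$ appears with positive multiplicity in some $\Lambda^j\CT_X$; the shifted weight $\lambda+\rho$ (with $\lambda = (\gamma,\nu)$ viewed as a $\sfG$-weight) is regular; and $\lambda+\rho$ fails $\sfG$-dominance. Under these conditions, Borel--Bott--Weil produces
\[
    H^{\ell(w)}(X, E_{\gamma,\nu}) \cong V^\sfG_{w\cdot\lambda} \neq 0,
\]
where $w \in W(\sfG)$ of positive length $\ell(w)$ sends $\lambda+\rho$ to the dominant chamber, and this summand contributes nontrivially to $H^{\ell(w)}(X, \Lambda^j\CT_X)$.

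As a sample calculation, for $X = \IGr(3, 2n)$ with $n \geq 4$ we may take the summand $\Lambda^2\CU^* \otimes S^2(\CUpU) \subset \Lambda^2\CT_X$. Its $\sfG$-weight $\lambda = (1,1,0,\ldots,0;\,2,0,\ldots,0)$ gives $\lambda + \rho = (n+1, n, n-2, n-1, n-4, \ldots, 1)$, which is regular and becomes $\sfG$-dominant after a single transposition of positions $3$ and $4$; hence $H^1(X, \Lambda^2\CT_X)$ contains the irreducible $\SP_{2n}$-representation of highest weight $\omega_4$. For $\IGr(4, 2n)$ the summand $S_{(2,1,1)}\CU^* \otimes \Sigma_{[2]}(\CUpU) \subset \Lambda^4\CT_X$ yields another length-one witness, with $\lambda + \rho = (n+2, n, n-1, n-3, n-2, n-5, \ldots, 1)$. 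Parallel constructions apply in types $B$ and $D$ after exchanging $S^2\CU^*$ with $\Lambda^2\CU^*$ and invoking the orthogonal version of Littlewood's rule.

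The main obstacle is constructing a uniform family of witnesses across all nonspecial $(k,n)$ in each of the three types. The naive choice $\Lambda^2\CU^* \otimes S^2(\CUpU)$ already fails for $k \geq 4$: the boundary entry of the $\rho$-shift coming from $\CUpU$ collides with the entry from position $k-1$ of the $\GL_k$-block, so one must enlarge $\gamma$ or $\nu$ enough to break the collision while preserving the existence of $E_{\gamma,\nu}$ as a direct summand of $\Lambda^j\CT_X$. A combinatorial case analysis, likely depending on the parity of $k$ and on $\min(k, n-k)$, together with explicit Littlewood--Richardson and Littlewood-modification computations, should provide the required witnesses.
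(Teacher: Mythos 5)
Your overall strategy (decompose exterior powers of the tangent bundle into irreducible equivariant pieces and hunt for Borel--Bott--Weil witnesses) is the same strategy as the paper's, and your $\IGr(3,2n)$ computation correctly reproduces the known $H^1(\IGr(3,V),\Lambda^2\CT)\simeq V^{\langle\omega_4\rangle}$ at the level of a graded piece. But there are two genuine gaps. First, the claimed splitting $\CT_X\cong \CU^*\otimes(\CUpU)\oplus B$ is false: the nilradical of $\sfP_k$ is two-step nilpotent for $1<k<n$, so $\mathfrak g/\mathfrak p$ is a non-semisimple $\sfP$-module and $\CT_X$ is only an extension $0\to(\CUpU)\otimes\CU^*\to\CT\to B\to 0$; it does not decompose as a $\sfG$-equivariant bundle. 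Consequently $\Lambda^j\CT_X$ carries only a filtration with graded pieces $\Lambda^p((\CUpU)\otimes\CU^*)\otimes\Lambda^q B$, and exhibiting a summand $E_{\gamma,\nu}$ of the associated graded with $H^{\ell(w)}\neq 0$ does not yet give $H^{\ell(w)}(X,\Lambda^j\CT_X)\neq 0$: one must show the class survives the spectral sequence of the filtration. This is not a formality --- the paper shows (Remark~\ref{rm:igr-top}) that for $\lfloor k/2\rfloor+1<j<k$ the natural candidate representation also occurs one column over in the next row, so a differential could kill it; the survival argument is exactly where the careful choices are made (in type $C$ the specific exterior power $j=\lfloor k/2\rfloor+1$, where the candidate sits in column $2j-k-1\in\{0,1\}$ and the adjacent rows are shown not to contain it; in types $B$/$D$ a multiplicity count showing $V^{\langle 2\omega_{k+2}\rangle}$ occurs $\lfloor k/2\rfloor$ times in $E_1^{k-2,0}$ but only $\lfloor(k-2)/2\rfloor$ times in the relevant adjacent column).

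Second, the core of the problem --- a uniform family of witnesses for all nonspecial $(k,n)$ in all three types --- is precisely what you defer to ``a combinatorial case analysis \dots should provide the required witnesses,'' so the proposal does not actually prove the statement; it proves (modulo the spectral-sequence issue above) only isolated low-rank cases already known from Belmans--Smirnov. The paper's answer to your ``main obstacle'' is concrete: in type $C$ the witness is $H^{k-j}(\IGr(k,V),\Lambda^j\CT)\simeq V^{\langle(2j-k\mid k+1)\rangle}$ with $j=\lfloor k/2\rfloor+1$, coming from the unique summand $(\CUpU)^{\langle(p)\rangle}\otimes\Sigma^{(q+1\mid j)}\CU^*$ with $p=k-j+1$; in types $B$/$D$ it is $V^{\langle 2\omega_{k+2}\rangle}\subset H^{k-2}(\OGr(k,V),\Lambda^{k+2}\CT)$. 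Finally, note that the statement as posed is the full Conjecture~F, including the curious Grassmannians $\OGr(n-1,2n+1)$, $n\geq 4$; the paper deliberately proves only the weaker form excluding these (and indicates they may in fact be Hochschild global), so any purported uniform witness construction in type $B$ must explain why it does not apply (or fails) when $k=n-1$ --- your proposal does not distinguish this case at all, e.g. the paper's type $B/D$ witness $\Lambda^{k+2}\CT$ with weight $2\omega_{k+2}$ is meaningless there since $k+2>n$.
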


Belmans and Smirnov note in~\cite[Remark~1]{Belmans2023} that Conjecture~\ref{conj:bel}
is optimistic in the sense that it might not hold for orthogonal Grassmannians $\OGr(n-1, 2n+1)$
with $n\geq 4$,
which we call \emph{curious}.
Specifically, they analyzed a certain spectral sequence for $\OGr(3, 9)$
and saw potential vanishing of all higher cohomology of the exterior
powers of the tangent bundle. In the present paper we show that a weaker
form of Conjecture~\ref{conj:bel} holds. Namely, we establish the following.

\begin{conjecture}\label{conj:bel-weak}
    Let $X$ be a nonspecial isotropic Grassmannian. Then $X$ is not Hochschild
    global. That is, for some $i > 0$ and some $j$ one has
    \begin{equation*}
        H^i(X, \Lambda^j\CT_X) \neq 0.
    \end{equation*}
\end{conjecture}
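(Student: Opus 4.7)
My plan is to apply Borel--Weil--Bott to the irreducible homogeneous subquotients of $\Lambda^j\CT_X$ and exhibit, for each nonspecial $X=\sfG/\sfP_k$, at least one pair $(i,j)$ with $i>0$ such that $H^i(X,\Lambda^j\CT_X)\neq 0$. The essential point is that when $X$ is not cominuscule, the nilpotent radical $\mathfrak{n}$ of $\sfP_k$ carries a non-trivial grading by the $\alpha_k$-height with at least two nonzero graded components. Consequently $\CT_X\simeq\sfG\times^{\sfP_k}\mathfrak{n}$ is naturally a \emph{filtered} homogeneous bundle, and each $\Lambda^j\CT_X$ inherits a filtration whose associated graded is a direct sum of irreducible homogeneous bundles indexed by the $\sfL_k$-irreducible summands of $\gr^\bullet\Lambda^j\mathfrak{n}$. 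The proof of Conjecture~\ref{conj:bel-weak} reduces to producing, in each case, an irreducible summand $\CE_\mu\subset\gr^\bullet\Lambda^j\CT_X$ whose cohomology $H^i(X,\CE_\mu)$ is non-zero for some $i>0$, together with a survival argument.

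Concretely, for $\IGr(k,2n)$ with $3\leq k\leq n-1$ the degree-$1$ and degree-$2$ parts of $\mathfrak{n}$ are, as $\sfL_k$-modules, isomorphic to $\CU^*\otimes\CUpU$ and $\rmS^2\CU^*$; the orthogonal cases are analogous, with $\rmS^2\CU^*$ replaced by $\Lambda^2\CU^*$ and the Levi suitably adjusted. For a judicious choice of $j$ and of a summand $\CE_\mu$ involving $\Lambda^a(\rmS^2\CU^*)$ (or its orthogonal counterpart) paired with a small exterior power of $\CU^*\otimes\CUpU$, the Borel--Weil--Bott recipe should produce nonzero cohomology in a single positive degree $i_0$, namely an irreducible $\sfG$-representation $V_\lambda$ whose existence requires the associated weight $\mu+\rho$ to cross exactly one wall of the dominant Weyl chamber. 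The presence of $\rmS^2\CU^*$ (or $\Lambda^2\CU^*$), which is absent in the cominuscule cases and too small in the (co)adjoint cases, is precisely what opens room for such wall-crossings once $k\geq 3$.

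The central obstacle is to control the spectral sequence
\begin{equation*}
    E_1^{p,q}=\bigoplus_{\CE_\mu\subset\gr^p\Lambda^j\CT_X}H^{p+q}(X,\CE_\mu)\Longrightarrow H^{p+q}(X,\Lambda^j\CT_X)
\end{equation*}
and ensure that the chosen contribution is not killed by any differential. I plan to achieve this by arranging the weight $\lambda$ so that the $\sfG$-isotypic component $V_\lambda$ appears at a \emph{single} position $(p_0,q_0)$ of the $E_1$-page with $p_0+q_0=i_0>0$; $\sfG$-equivariance of the spectral sequence together with Schur's lemma then forces every differential to vanish on the $V_\lambda$-isotypic piece, so $V_\lambda$ survives to $E_\infty$ and contributes nontrivially to $H^{i_0}(X,\Lambda^j\CT_X)$.

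Verifying the uniqueness of the appearance of $V_\lambda$ is the main combinatorial task. I expect it to require case-by-case inspection across the three families $B_n$, $C_n$, $D_n$ and across the allowed ranges of $k$, with the admissible weights $\lambda$ produced as extremal dominant weights arising from the branching of a fundamental $\sfG$-representation through the Levi $\sfL_k$. The distinction between the curious Grassmannians $\OGr(n-1,2n+1)$ and their nonspecial relatives is likely to surface at exactly this point, as the failure---for the curious family---of a weight satisfying the unique-appearance condition; this explains why the present method addresses Conjecture~\ref{conj:bel-weak} rather than the stronger Conjecture~\ref{conj:bel}.
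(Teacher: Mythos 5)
Your overall framework coincides with the paper's: the filtration of $\CT_X$ coming from the extension of $S^2\CU^*$ (resp.\ $\Lambda^2\CU^*$) by $(\CUpU)\otimes\CU^*$, the decomposition of the graded pieces of $\Lambda^j\CT_X$ into irreducible bundles $(\CUpU)^{\langle\beta\rangle}\otimes\Sigma^\alpha\CU^*$, Borel--Bott--Weil, and the equivariant spectral sequence. But as written the proposal stops at the point where the actual work begins: no exterior power $j$, no weight $\lambda$, and no cohomological degree are ever specified, and the nonvanishing is only asserted to follow from a ``judicious choice'' that ``should'' cross exactly one wall. The paper's content is precisely these choices and the computations certifying them: $j=\lfloor k/2\rfloor+1$ and the weight $(2j-k\mid k+1)$ in degree $k-j$ for $\IGr(k,2n)$ (Proposition~\ref{pr:igr-main}), and $2\omega_{k+2}$ inside $H^{k-2}(X,\Lambda^{k+2}\CT)$ for $\OGr(k,N)$ (Proposition~\ref{prop:ogr-k-2}), each requiring the vanishing lemmas of the type of Lemmas~\ref{lm:iab00}--\ref{lm:iab0} and a careful inversion count. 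Without exhibiting at least one such explicit instance per family, the proposal is a plan, not a proof.

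More importantly, your survival criterion --- that the chosen $\sfG$-isotypic component appear at a \emph{single} position of the $E_1$-page, whence Schur's lemma kills all differentials --- is too strong to carry the argument through. In type $C$ this near-uniqueness does hold, but only at the specific value $j=\lfloor k/2\rfloor+1$; for $\lfloor k/2\rfloor+1<j<k$ the same representation reappears one row up (Remark~\ref{rm:igr-top}), so a differential could a priori kill it, and the paper must restrict to the smallest $j$ (Lemma~\ref{lm:igr-top}). In types $B$ and $D$ uniqueness fails outright for the weight actually used: $V^{\langle 2\omega_{k+2}\rangle}$ occurs with multiplicity $\lfloor k/2\rfloor$ in $E^{k-2,0}_2$ and with multiplicity $\lfloor (k-2)/2\rfloor$ in $E^{k-3,1}_2$, and survival is established by comparing these multiplicities, not by a single-occurrence argument. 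Your proposal offers no mechanism for this situation and no evidence that a uniquely occurring isotypic component exists in the orthogonal cases at all, so the orthogonal half of Conjecture~\ref{conj:bel-weak} is not reached by the outlined method; likewise, the suggestion that the curious family $\OGr(n-1,2n+1)$ is singled out exactly by failure of the unique-appearance condition is speculation rather than an argument.
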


In~\cite{Belmans2023}, the authors verify Conjecture~\ref{conj:bel-weak}
for nonspecial isotropic Grassmannians $\IGr(3, V)$,
where $V$ is a symplectic vector space of dimension $2n$, $n\geq 4$.
Precisely, they show the following.

\begin{proposition}[{\cite[Proposition~D]{Belmans2023}}]\label{prop:bel-3}
    Let $V$ be a symplectic vector space of dimension $2n$, where $n\geq 4$.
    Then
    \begin{equation}\label{eq:bel-3}
        H^1(\IGr(3, V), \Lambda^2\CT) \simeq V^{\langle \omega_4 \rangle},
    \end{equation}
    where $V^{\langle \omega_4 \rangle}$ is the irreducible representation of
    $\SP(V)\simeq \SP_{2n}$ of the highest weight $\omega_4$.
\end{proposition}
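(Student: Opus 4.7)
The plan is to compute $H^1(X, \Lambda^2\CT_X)$ for $X=\IGr(3, V)$ by a Borel--Weil--Bott analysis of the filtration of $\Lambda^2\CT_X$ induced by the canonical short exact sequence
\[
0 \to \CU^*\otimes\CM \to \CT_X \to S^2\CU^* \to 0,
\]
where $\CU\subset V\otimes\CO_X$ is the tautological rank-$3$ subbundle and $\CM=\CU^\perp/\CU$ is the symplectic rank-$(2n-6)$ quotient. Taking $\Lambda^2$ yields a three-step filtration on $\Lambda^2\CT_X$ whose successive graded pieces, after the standard $\GL_3$- and $\SP_{2n-6}$-plethysms, read
\[
(S^2\CU^*\otimes\Lambda^2\CM)\oplus(\Lambda^2\CU^*\otimes S^2\CM),\quad (S^3\CU^*\oplus S^{(2,1)}\CU^*)\otimes\CM,\quad S^{(3,1)}\CU^*.
\]
Using the splitting $\Lambda^2\CM=\CO\oplus V^{\langle\omega_2\rangle}_{\SP(\CM)}$ induced by the symplectic form on $\CM$, every graded piece further decomposes into $\GL_3\times\SP_{2n-6}$-irreducibles.

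The next step is to apply Borel--Weil--Bott to each irreducible summand: form the sum of its highest weight with $\rho$, check regularity, and, if regular, identify the Weyl element shuffling it into the dominant chamber; the cohomology is then supported in the degree equal to the length of that element, and is the irreducible $\SP_{2n}$-representation with the resulting highest weight. For $n\geq 4$ most summands are acyclic in degrees $\leq 2$, and I expect exactly one summand to produce $V^{\langle\omega_4\rangle}$ in its $H^1$, via a single simple reflection rearranging the shifted weight into $\omega_4+\rho$. A natural first candidate to inspect is one of the subquotients of $\Lambda^2\CU^*\otimes S^2\CM$ or of $S^2\CU^*\otimes V^{\langle\omega_2\rangle}_{\SP(\CM)}$, since these carry weights with an $\epsilon_4$-factor of the right magnitude.

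Finally, to read off $H^1(\Lambda^2\CT_X)$ from the filtration one runs the associated spectral sequence $E_1^{p,q}=H^{p+q}(\gr^p\Lambda^2\CT_X)\Rightarrow H^{p+q}(\Lambda^2\CT_X)$ and verifies that no differential enters or leaves the $V^{\langle\omega_4\rangle}$-entry. Since the source and target of any such differential are explicit $\SP_{2n}$-representations given by Bott's theorem, this reduces to checking that no neighbouring graded piece contributes an isomorphic $\SP_{2n}$-subrepresentation in an adjacent cohomological degree. The main obstacle is precisely this finite but extensive combinatorial bookkeeping: Bott's algorithm must be run, uniformly in $n\geq 4$, for every irreducible summand, and one must single out the unique summand producing $V^{\langle\omega_4\rangle}$ while ruling out coincidental duplications or cancellations elsewhere in the filtration.
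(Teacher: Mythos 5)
Your route is the paper's own: there this statement is recovered as the case $k=3$, $j=2$ of Proposition~\ref{pr:igr-main}, which is proved by filtering $\Lambda^j\CT$ through the extension $0\to(\CUpU)\otimes\CU^*\to\CT\to S^2\CU^*\to0$, decomposing the graded pieces via Lemmas~\ref{lm:lambda-t} and~\ref{lm:lambda-s2}, evaluating each irreducible summand by Borel--Bott--Weil (Theorem~\ref{thm:bbw}), and using equivariance of spectral sequence~\eqref{eq:ss} to see that the relevant entry survives. The weakness of your proposal is that it stops exactly where the content lies: the Bott evaluations are not carried out, and you hedge between two candidate summands, one of which is in fact acyclic. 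Concretely, for $S^2\CU^*\otimes(\CUpU)^{\langle 1,1\rangle}$ the shifted weight $\rho+(2,0,0,1,1,0,\ldots)$ has the repeated entry $n-2$ in positions $3$ and $4$, so it contributes nothing. The unique contributor is $\Lambda^2\CU^*\otimes S^2(\CUpU)$: its shifted weight $(n+1,\,n,\,n-2,\,n-1,\,n-4,\,n-5,\ldots,1)$ has exactly one inversion and sorts to $\rho+\omega_4$, giving $H^1=V^{\langle\omega_4\rangle}$. The middle graded piece $(\CUpU)\otimes\CU^*\otimes S^2\CU^*$ is entirely acyclic (its summands have $\beta\neq0$ and $\dht(\alpha)+\beta_1\leq 3=k$, cf.\ Lemma~\ref{lm:iab00}), and the remaining summands $S^2\CU^*$ and $\Sigma^{(3,1)}\CU^*$ have only global sections.

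The differential check is also lighter than your outline suggests. In the indexing of~\eqref{eq:ss} the nonzero $H^1$ entry sits at $q=0$, the sub end of the filtration, so no differential leaves it; every incoming differential originates in the $H^0$ row at positive filtration index, whose only nonzero entry is $H^0(\Sigma^{(3,1)}\CU^*)=V^{\langle(3,1,0)\rangle}$ (global sections force the $(\CUpU)$-weight to vanish, so all such entries are $V^{\langle\alpha\rangle}$ with $\alpha\in\YD_3$ of height at most $3$), which cannot be isomorphic to $V^{\langle\omega_4\rangle}$, whose height is $4$. Once these finitely many evaluations are written out, your outline becomes a complete proof and coincides with the paper's argument, which performs the same computation uniformly in $k$.
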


The following question remains open even for $\OGr(3, 9)$.

\begin{question}
    Let $X$ be a curious isotropic Grassmannian. That is, $X=\OGr(n-1, 2n+1)$
    for some $n\geq 4$. Is $X$ Hochschild global? 
\end{question}

\subsection{Main results}\label{ssec:main}
The main result of the paper gives an affirmative answer to Conjecture~\ref{conj:bel-weak}.
It immediately follows from the following two theorems. The first theorem
treats nonspecial isotropic Grassmannians in type $C$.

\begin{theo}\label{thm:c}
    Consider a symplectic Grassmannian $\IGr(k, V)$ which is nonspecial; that is, $3\leq k\leq n-1$, where $\dim V = 2n$.
    \begin{enumerate}[label=(\alph*)]
        \item\label{thm:c:1} For any $0\leq l\leq k-1$ one has
        \[
            \HH^l(\IGr(k, V)) = H^0(\IGr(k, V), \Lambda^l\CT).
        \]
        \item\label{thm:c:2} Let $j=\left\lfloor \frac{k}{2} \right\rfloor +1$. Then
        \[
            H^{k-j}(\IGr(k, V), \Lambda^j\CT)\neq 0.
        \]
        For instance, $\IGr(k, V)$ is not Hochschild global.
    \end{enumerate}
\end{theo}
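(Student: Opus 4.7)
The plan is to analyse $\Lambda^j \CT$ through the natural two-step filtration of the tangent bundle on $\IGr(k,V) = \SP_{2n}/\sfP_k$. Using that a first-order deformation $\phi\colon \CU \to V/\CU$ of an isotropic subspace preserves isotropy if and only if the composition $\CU \to V/\CU \twoheadrightarrow V/\CU^\perp \simeq \CU^*$ is a symmetric bilinear form on $\CU$, one obtains the short exact sequence of $\sfP_k$-equivariant bundles
\begin{equation*}
    0 \to \CU^* \otimes \CE \to \CT \to S^2 \CU^* \to 0,
\end{equation*}
where $\CE = \CU^\perp / \CU$ is the symplectic-reduction bundle of rank $2(n-k)$. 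Passing to exterior powers yields a filtration of $\Lambda^j \CT$ with associated graded
\begin{equation*}
    \gr^p \Lambda^j \CT \;\simeq\; \Lambda^{j-p}(\CU^* \otimes \CE) \otimes \Lambda^p(S^2 \CU^*), \qquad 0 \leq p \leq j,
\end{equation*}
and hence a spectral sequence converging to $H^\bullet(\IGr(k,V), \Lambda^j \CT)$.

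Each graded piece is then decomposed into irreducible $\sfP_k$-equivariant bundles by combining Cauchy's formula
\begin{equation*}
    \Lambda^{j-p}(\CU^* \otimes \CE) = \bigoplus_{|\lambda|=j-p} S^\lambda \CU^* \otimes S^{\lambda'}\CE,
\end{equation*}
a standard plethystic decomposition of $\Lambda^p(S^2 \CU^*)$ into Schur functors of $\CU^*$, the Littlewood--Richardson rule (to fuse the two $\CU^*$-factors), and the symplectic branching rule (to decompose $S^\mu \CE$ under $\SP_{2n-2k}$). The cohomology of each resulting irreducible summand on $\IGr(k,V) = \SP_{2n}/\sfP_k$ is then read off from the Borel--Weil--Bott theorem applied to its shifted weight.

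For part~\ref{thm:c:1}, the inequality $l \leq k - 1$ restricts the Schur shapes that can appear on the $\CU^*$-side: their total box count is bounded by $3(k-1)$ and they have at most $k$ rows. Under such bounds I expect the shifted weight $\mu + \rho$ of every admissible summand to lie already in the dominant $\SP_{2n}$-chamber, so that BWB forces vanishing of all higher cohomology for each summand. The spectral sequence then collapses onto the $q = 0$ row and yields $\HH^l(\IGr(k,V)) = H^0(\IGr(k,V), \Lambda^l \CT)$.

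For part~\ref{thm:c:2}, I would exhibit a single irreducible summand of some $\gr^p \Lambda^j \CT$ whose BWB cohomology is concentrated in degree exactly $k-j$ and whose output $\SP_{2n}$-representation does not appear in any other $E_1$-entry; then $\SP_{2n}$-equivariance of the spectral sequence differentials forces this class to survive to the abutment. Guided by Proposition~\ref{prop:bel-3}, where for $k = 3$ the summand $\Lambda^2 \CU^* \otimes S^2 \CE$ of $\gr^0 \Lambda^2 \CT$ produces $V^{\langle \omega_4 \rangle}$ via a single simple reflection $s_3$, the natural candidates involve a hook-shaped Schur functor of $\CU^*$ tensored with an $\SP_{2n-2k}$-irreducible subquotient of some $S^r \CE$, so that the shifted weight requires exactly $k-j$ consecutive simple reflections $s_j, s_{j+1}, \ldots, s_{k-1}$ to become $\SP_{2n}$-dominant. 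The main difficulty is twofold: first, to pin down this summand precisely (which must depend on the parity of $k$), and second, to verify that the resulting $\SP_{2n}$-irreducible appears in no competing graded piece of $\Lambda^j \CT$, which is what ultimately guarantees survival in the spectral sequence.
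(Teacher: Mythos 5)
Your framework (the two-step filtration of $\CT$, Cauchy's formula for $\Lambda^p((\CUpU)\otimes\CU^*)$, the plethysm for $\Lambda^q S^2\CU^*$, Littlewood--Richardson, and Borel--Bott--Weil applied summandwise in an equivariant spectral sequence) is exactly the one the paper uses, but the step you propose as the engine of part~\ref{thm:c:1} is wrong as stated. You expect every admissible summand to have dominant shifted weight, hence no higher cohomology at all; this cannot be true, because part~\ref{thm:c:2} itself asserts $H^{k-j}(\IGr(k,V),\Lambda^{j}\CT)\neq 0$ for $j=\lfloor k/2\rfloor+1\leq k-1$, and part~\ref{thm:c:1} simultaneously requires, e.g.\ for $k=5$, that $H^{1}(\IGr(5,V),\Lambda^{3}\CT)=0$ while $H^{2}(\IGr(5,V),\Lambda^{3}\CT)\neq 0$. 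So blanket vanishing fails precisely in the range you need, and the argument must be degree-sensitive. What actually works (Lemmas~\ref{lm:iab00} and~\ref{lm:iab0} in the paper) is: every summand $(\CUpU)^{\langle\beta\rangle}\otimes\Sigma^{\alpha}\CU^*$ of a graded piece of $\Lambda^{j}\CT$ has $\dht(\alpha)\leq \dht(\lambda^T)+\dht(\mu)\leq p+q=j$ (Lemma~\ref{lm:lr-h}); then either $\beta_1\leq k-j$, in which case the shifted weight is singular or dominant and there is no higher cohomology, or $\beta_1\geq k-j+1$, in which case the entries of $\rho+(\alpha,\beta)$ at positions $j+1,\dots,k$ are all smaller than the entry at position $k+1$, so the weight, if nonsingular, has at least $k-j$ inversions and its unique nonzero cohomology group sits in degree $\geq k-j>l-j$. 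It is this lower bound on the number of inversions, not dominance, that kills $H^{l-j}$ for $l\leq k-1$.

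For part~\ref{thm:c:2} your outline coincides with the paper's strategy, but the decisive identifications are left open. With $i=k-j$, $q=2j-k-1$, $p=j-q$, the only summand of $\gr\,\Lambda^{j}\CT$ contributing to the $i$-th row is the one with $\lambda=(p)$, $\mu=(q+1\mid q)$, $\alpha=(q+1\mid j)$, yielding $E^{2j-k-1,i}_1=V^{\langle(2j-k\mid k+1)\rangle}$, i.e.\ $V^{\langle\omega_{k+1}\rangle}$ or $V^{\langle\omega_1+\omega_{k+1}\rangle}$ according to parity; all other summands in that row are acyclic or contribute in other degrees. Survival then requires two checks: the row $i-1$ (only global sections occur there, with highest weights of height $\leq k-1<k+1$, so the representation cannot appear), and the entries $E^{q',i+1}_1$ with $q'<2j-k-1$. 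Here be warned that your stronger hope---that the representation appears in no competing graded piece of $\Lambda^{j}\CT$---is false for $\lfloor k/2\rfloor+1<j<k$: Remark~\ref{rm:igr-top} exhibits a copy of $V^{\langle(2j-k\mid k+1)\rangle}$ in $E^{2j-k-2,i+1}_1$, which a differential could hit. The theorem's restriction to $j=\lfloor k/2\rfloor+1$ is therefore essential: in that case the receiving columns $q'<2j-k-1$ are empty for $k$ odd and reduce to $q'=0$ for $k$ even, where a short Borel--Bott--Weil computation (Lemma~\ref{lm:igr-top}) excludes the representation. Until the summand is pinned down and these two row checks are carried out, your text is a plan rather than a proof.
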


Theorem~\ref{thm:c} is a combination of Proposition~\ref{pr:ig-hh0}, which
shows part~\ref{thm:c:1}, and Proposition~\ref{pr:igr-main}, which
shows part~\ref{thm:c:2} and, moreover, fully computes the corresponding cohomology group.

The second theorem treats nonspecial isotropic Grassmannians in types $B$ and $D$.

\begin{theo}\label{thm:bd}
    Consider an orthogonal Grassmannian $\OGr(k, V)$ which is nonspecial; that is, $3\leq k\leq n-2$, where $n=\lfloor \frac{N}{2} \rfloor$.
    \begin{enumerate}[label=(\alph*)]
        \item\label{thm:bd:1} For any $0\leq l\leq k-2$ one has
        \[
            \HH^l(\OGr(k, N)) = H^0(\OGr(k, 2n), \Lambda^l\CT).
        \]
        \item\label{thm:bd:2} One has
        \[
            H^{k-2}(\OGr(k, V), \Lambda^{k+2}\CT)\neq 0.
        \]
        For instance, $\OGr(k, V)$ is not Hochschild global.
    \end{enumerate}
\end{theo}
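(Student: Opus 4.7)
The plan is to study $H^*(X, \Lambda^l \CT)$ for $X = \OGr(k, V)$ through the natural two-step $\sfP_k$-equivariant filtration on $\CT$ and apply Borel--Weil--Bott to its graded pieces. At $[\CU] \in X$, the coroot $\alpha_k^\vee$ induces a grading on $\mathfrak{so}(V)$ with $\mathfrak{g}_{-1} \cong U^\vee \otimes W$ and $\mathfrak{g}_{-2} \cong \Lambda^2 U^\vee$ (writing $U = \CU|_{[\CU]}$, $W = \CUpU|_{[\CU]}$), and because $[\mathfrak{u}_{\sfP_k}, \mathfrak{g}_{-1}] \subset \mathfrak{p}$, the summand $\mathfrak{g}_{-1}$ is a $\sfP_k$-submodule of $\mathfrak{g}/\mathfrak{p}$. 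Globalizing yields the short exact sequence of equivariant bundles
\[
    0 \to \CA \to \CT \to \CB \to 0,
\]
with $\CA = \CU^\vee \otimes \CUpU$ and $\CB = \Lambda^2 \CU^\vee$. Its $l$-th exterior power induces a filtration on $\Lambda^l \CT$ with associated graded pieces $\Lambda^a \CA \otimes \Lambda^b \CB$ for $a + b = l$, producing a spectral sequence converging to $H^*(X, \Lambda^l \CT)$.

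For part~\ref{thm:bd:1} I would show that every $E_1$-entry $H^*(X, \Lambda^a\CA \otimes \Lambda^b\CB)$ with $a + b \leq k - 2$ is concentrated in cohomological degree $0$. The Cauchy identity $\Lambda^a\CA = \bigoplus_{\lambda \vdash a} \Sigma^\lambda\CU^\vee \otimes \Sigma^{\lambda^t}(\CUpU)$, the standard plethysm $\Lambda^b(\Lambda^2\CU^\vee) = \bigoplus \Sigma^\mu\CU^\vee$ with $|\mu| = 2b$, Littlewood--Richardson for the $\CU^\vee$ Schur factors, and Littlewood's $GL \to \SO$ branching for $\Sigma^{\lambda^t}(\CUpU)$ express $\Lambda^a\CA \otimes \Lambda^b\CB$ as a direct sum of equivariant bundles with $L$-highest weights $(\alpha;\beta)$ satisfying $|\alpha| = a + 2b$ and $|\beta| \leq a$. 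Concatenating into a $\sfG$-weight and shifting by $\rho$ gives $(\alpha_1 + n - 1, \ldots, \alpha_k + n - k, \beta_1 + n - k - 1, \ldots)$; under the constraint $a + b \leq k - 2$, this vector either lies in the strictly dominant chamber (so Borel--Weil--Bott returns pure $H^0$) or has two equal coordinates (so the bundle is acyclic). A finite case analysis on the number of trailing zeros of $\alpha$ handles all configurations. Hence $E_1^{p,q} = 0$ for $q \geq 1$, the spectral sequence collapses, and $\HH^l(X) = H^0(X, \Lambda^l\CT)$ for $l \leq k - 2$.

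For part~\ref{thm:bd:2} I would exhibit an explicit equivariant summand of $\Lambda^{k+2}\CT$ whose Borel--Weil--Bott cohomology is concentrated in degree exactly $k - 2$. The recipe: pick $(a, b)$ with $a + b = k + 2$, and within $\Lambda^a \CA \otimes \Lambda^b \CB$ isolate an $L$-weight $(\alpha; \beta)$ of the form $\alpha = (\alpha_1, \alpha_2, 0, \ldots, 0)$ with $\beta$ chosen so that the shifted $\sfG$-weight $(\alpha; \beta) + \rho$ is regular and its shortest dominant-chamber representative is obtained by the length-$(k-2)$ cycle $s_3 s_4 \cdots s_k$, which transports the coordinate at position $k + 1$ past positions $3, \ldots, k$. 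Borel--Weil--Bott then places the cohomology in $H^{k-2}$. Finally, since the resulting $\sfG$-irreducible appears in only this one filtration level of $\Lambda^{k+2}\CT$, no spectral-sequence differential can cancel it, and the contribution descends to a nonzero class in $H^{k-2}(X, \Lambda^{k+2}\CT)$.

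The main difficulty is part~\ref{thm:bd:2}: producing an $L$-weight that simultaneously (i) appears with nonzero multiplicity after plethysm, Littlewood--Richardson, and orthogonal branching in a chosen $\Lambda^a \CA \otimes \Lambda^b \CB$, (ii) has Weyl-length exactly $k - 2$ without crossing any singular wall, and (iii) is not matched by a parallel contribution in a different graded piece. Small-$k$ cases---especially $k = 3$ and $k = 4$---and cases with small $N - 2k$ need individual attention, since the plethysm of $\Lambda^b(\Lambda^2\CU^\vee)$ and the orthogonal Schur functor $\Sigma^{\lambda^t}\CUpU$ are both truncated by rank constraints and rule out some naive candidates.
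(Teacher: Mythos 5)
Your overall framework is the same as the paper's (the two-step equivariant filtration of $\CT$, Cauchy and plethysm decompositions, Littlewood--Richardson and orthogonal branching, Borel--Bott--Weil, and the spectral sequence of the filtration), but both halves contain genuine gaps. For part~(a), your claim that every $E_1$-entry $H^\bullet(X,\Lambda^a\CA\otimes\Lambda^b\CB)$ with $a+b\leq k-2$ is concentrated in degree $0$ (``strictly dominant or singular'') is false. For example, for odd $k\geq 5$ take $a=\tfrac{k+1}{2}\leq k-2$, $b=0$, and the summand $(\CUpU)^{\langle a\rangle}\otimes\Lambda^a\CU^*\subset\Lambda^a\CA$: the shifted weight $\rho+(1^a,0^{k-a},a,0,\ldots)$ is regular with exactly $a-1=\tfrac{k-1}{2}$ inversions, so this piece has nonzero cohomology in degree $\tfrac{k-1}{2}>0$. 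What is actually true (and all that is needed, since $\HH^l$ with $l\leq k-2$ only sees $H^i(\Lambda^j\CT)$ with $i\leq k-2-j$) is the weaker statement that each summand $(\CUpU)^{\langle\beta\rangle}\otimes\Sigma^\alpha\CU^*$ occurring in a graded piece of $\Lambda^j\CT$ has $\dht(\alpha)\leq j+1$, hence vanishing cohomology in degrees $0<i<k-j-1$; your stronger dichotomy would have to be abandoned and replaced by this bound.

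The more serious gap is in part~(b). Your survival argument rests on the premise that the chosen $\sfG$-irreducible ``appears in only this one filtration level of $\Lambda^{k+2}\CT$,'' and you offer no verification of it; for the natural candidate it is simply false. The representation $V^{\langle 2\omega_{k+2}\rangle}$ occurs in the level $q=0$ piece $\Lambda^{k+2}\bigl((\CUpU)\otimes\CU^*\bigr)$ in degree $k-2$ with multiplicity $\lfloor k/2\rfloor$ (one copy for each $\beta=(a,b)$, $a\geq b\geq 2$, $a+b=k+2$), \emph{and} in the level $q=1$ piece in degree $k-3$ with multiplicity $\lfloor (k-2)/2\rfloor$, which is exactly the source of potentially cancelling differentials $d_r$ of degree $(-r,1)$. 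So no single copy can be shown to survive by a uniqueness argument; one must compare multiplicities across filtration levels and use equivariance to conclude that $\lfloor (k-2)/2\rfloor<\lfloor k/2\rfloor$ copies at most can be killed, leaving a nonzero class in $H^{k-2}(X,\Lambda^{k+2}\CT)$. This multiplicity count (together with the Borel--Bott--Weil classification of which pairs $(\beta,\alpha)$ can produce $V^{\langle 2\omega_{k+2}\rangle}$, and the fact from the restriction lemma that $(\CUpU)^{\langle\lambda\rangle}$ occurs in $\Sigma^\lambda(\CUpU)$ with multiplicity one) is precisely the content you defer as ``the main difficulty,'' so as written the proposal does not establish the nonvanishing.
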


Theorem~\ref{thm:bd} is a combination of Proposition~\ref{pr:og-hh0}, which
shows part~\ref{thm:bd:1}, and Proposition~\ref{prop:ogr-k-2}, which
shows part~\ref{thm:bd:2}. Note that unlike the case of symplectic
nonspecial Grassmannians, for orthogonal nonspecial Grassmannians
we only show non-vanishing of the corresponding cohomology group.

\subsection{Bott vanishing}
As a corollary of Theorems~\ref{thm:c} and \ref{thm:bd}, we conclude
that Bott vanishing fails for nonspecial isotropic Grassmannians.
Specifically, let us say that a smooth projective variety $X$
of dimension $d$ satisfies \emph{Bott vanishing} if for
any ample line bundle $\CL$ one has
\[
    H^i(X, \Omega_X^j\otimes\CL) = 0
    \quad
    \text{for all }0\leq j \leq d \text{ and all } i>0.
\]
If $X$ is Fano---that is, $\omega_X=\det \Omega_X$ is anti-ample---then
the failure of being Hochschild global trivially implies
the failure of Bott vanishing since
$\Lambda^j\CT_X\simeq \Lambda^{d-j}\Omega_X\otimes \omega_X^{-1}$.
Since all rational homogeneous varieties, in particular, nonspecial
Grassmannians, are Fano, we get the following result.

\begin{cor}
    Bott vanishing fails for nonspecial isotropic Grassmannians.
\end{cor}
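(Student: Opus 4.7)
The plan is to deduce the corollary directly from Theorems~\ref{thm:c} and \ref{thm:bd} via the duality $\Lambda^j\CT_X \simeq \Lambda^{d-j}\Omega_X \otimes \omega_X^{-1}$, exactly as outlined in the discussion immediately preceding the statement. No real new work is needed; the corollary is a formal consequence for any Fano variety that fails to be Hochschild global.

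First I would fix a nonspecial isotropic Grassmannian $X$ of dimension $d$. By Theorem~\ref{thm:c} (in type $C$) or Theorem~\ref{thm:bd} (in types $B$ and $D$), there exist $i>0$ and $j$ with $H^i(X, \Lambda^j\CT_X) \neq 0$. Next I would invoke the standard identification
\[
    \Lambda^j\CT_X \;\simeq\; \Lambda^{d-j}\Omega_X \otimes \omega_X^{-1},
\]
which follows from the perfect pairing $\Lambda^j\CT_X \otimes \Lambda^{d-j}\CT_X \to \Lambda^d\CT_X = \omega_X^{-1}$. Setting $\CL = \omega_X^{-1}$ and $j' = d-j$, the non-vanishing above becomes
\[
    H^i(X, \Omega_X^{j'}\otimes \CL) \neq 0 \quad \text{with } i>0.
\]

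Finally, I would note that $X$, being a rational homogeneous variety $\sfG/\sfP$, is Fano, so $\omega_X^{-1}$ is ample. Hence the line bundle $\CL$ chosen above is ample while the cohomology group $H^i(X, \Omega_X^{j'}\otimes\CL)$ is nonzero for $i>0$, which directly contradicts Bott vanishing for $X$. There is essentially no obstacle: the only thing to check is that the Fano property really holds for all nonspecial isotropic Grassmannians, and this is immediate since the anticanonical bundle of any $\sfG/\sfP$ is the restriction of an ample line bundle from the ambient projective embedding. Thus Bott vanishing fails for every nonspecial isotropic Grassmannian.
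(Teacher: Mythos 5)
Your proposal is correct and follows exactly the paper's argument: the non-vanishing from Theorems~\ref{thm:c} and \ref{thm:bd}, combined with the identification $\Lambda^j\CT_X\simeq \Lambda^{d-j}\Omega_X\otimes\omega_X^{-1}$ and the Fano property of rational homogeneous varieties, shows Bott vanishing fails for $\CL=\omega_X^{-1}$. Nothing further is needed.
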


We refer the interested reader to~\cite[Section~5.3]{Belmans2023}
and~\cite{Belmans2025}
for further results on Bott vanishing for other rational homogeneous
varieties.

\subsection{Organization of the paper} The paper is organized as follows.
In Section~\ref{s:com} we collect all the preliminaries.
In Section~\ref{s:c} we treat nonspecial isotropic Grassmannians
in type $C$, and in Section~\ref{s:bd} we treat nonspecial isotropic
Grassmannians in types $B$ and $D$.

\section{Computational tools}\label{s:com}

\subsection{Young diagrams}
Throughout the paper we use the terms \emph{(Young) diagram} and \emph{partition}
interchangeably. We denote the set of partitions with $k$ parts by
\[
    \YD_k = \{\alpha \in \ZZ^k \mid \alpha_1\geq \alpha_2\geq \cdots\geq \alpha_k \geq 0 \}.
\]
We follow the convention under which $\lambda_1$ is the length of the first
row of the corresponding Young diagram. A \emph{strict} partition
$\lambda\in\YD_k$ is a partition satisfying
\[
    \lambda_1 > \lambda_2 > \cdots > \lambda_k > 0.
\]

For a Young diagram $\lambda\in\YD$, we denote by $\lambda^T$ its transpose:
\[
    {\lambda^T}\!\!_i = |\{j\geq 0 \mid \lambda_j \geq i\}|.
\]
The \emph{size} $\lambda$ is defined as $|\lambda| = \sum_i \lambda_i$.
For instance, $|\lambda| = |\lambda^T|$.
The \emph{width} of $\lambda$ is defined as the length of its first row, and
the \emph{height} of $\lambda$ is defined as the height of its first column.
We trivially have $\dwd(\lambda) = \lambda_1$, $\dht(\lambda)=\dwd(\lambda^T)$,
and $\dwd(\lambda) = \dht(\lambda^T)$.

Let $d(\lambda)$ be the size of the largest rectangle that can be inscribed
in $\lambda$, or, alternatively, the length of its diagonal:
\[
    d(\lambda)=\max\{i \mid \lambda_i\geq i\}.
\]
It is immediate that $d(\lambda)=d(\lambda^T)$.

Fix a nonnegative integer $d\geq 0$. There is a bijection between the set
of Young diagrams $\lambda$ with $d(\lambda)=d$ and pairs of strict
partitions with $d$ parts. Namely, for a box $(i, j)$ in $\lambda$
(which is defined by a pair of positive integers $i>0$ and $j>0$
such that $j\leq \lambda_i$),
define its \emph{arm length} as the number of boxes in the same row $\lambda$
which lie weakly east of $(i, j)$, and its \emph{leg length}
as the number of boxes in the same column of $\lambda$ which lie
weakly south of $(i, j)$. Remark that $(i, i)$ lies in $\lambda$ for all
$i=1,\ldots,d$. Let $a_i$ and $b_i$ denote the arm and leg
lengths of $(i, i)$, respectively. Precisely, $a_i=\lambda_i-i-1$, and
$b_i = {\lambda^T}\!\!_i-i-1$. One thus gets a pair of strict
partitions $a_1 > a_2 > \cdots a_d$ and $b_1 > b_2 > \cdots > b_d$,
and this association is the aforementioned bijection.
We denote the diagram corresponding to such a pair of strict partitions
by $(a_1, a_2, \ldots, a_d\mid b_1, b_2,\ldots, b_d)$. This notation
is commonly called the \emph{hook notation}.
For instance, $(t|1)$ corresponds to the partition $(t)$, while
$(1|t)$ corresponds to the partition $(1, 1, \ldots, 1)\in \YD_t$.
If $\lambda=(a_1, a_2, \ldots, a_d\mid b_1, b_2,\ldots, b_d)$, then
$\lambda^T=(b_1, b_2, \ldots, b_d\mid a_1, a_2, \ldots, a_d)$,
$d(\lambda)=d$, $\dwd(\lambda)=a_1$, $\dht(\lambda)=b_1$, and
$|\lambda| = \sum_i (a_i + b_i) - d$.

We call a diagram $\lambda = (a_1, a_2, \ldots, a_d\mid b_1, b_2,\ldots, b_d)$
$s$-\emph{balanced}
if for all $i=1,\ldots,d$
one has $a_i = b_i + s$. We denote the set of $s$-balanced diagrams by $\BD_s$.
There are two particular cases of balanced diagrams that we will encounter:
\begin{equation}\label{eq:balanced}
    \RB_q = \{\lambda\in \BD_{1} \mid |\lambda| = q\} \quad \text{and} \quad \DB_q = \{\lambda\in \BD_{-1} \mid |\lambda| = q\},
\end{equation}
which we call \emph{right} and \emph{down balanced}, respectively.
Since the transpose of an $s$-balanced diagram is $(-s)$-balanced,
we see that $\RB_q = \{\lambda^T \mid \lambda\in \DB_q\}$.
Right and down balanced diagrams are always of even size.
Here are the sets of right balanced diagrams of small size:
\[
    \ytableausetup{boxsize=.75em}
    \RB_2 = \left\{\ydiagram{2}\right\}, \quad
    \RB_4 = \left\{\ydiagram{3,1}\right\}, \quad
    \RB_6 = \left\{\ydiagram{3, 3},\ \ydiagram{4,1,1}\right\}.
\]
For every $\lambda\in \RB_{2r}$, one has $\dht(\lambda)\leq r$, and the equality
is achieved for exactly one such diagram: $(r+1\mid r)\in \YD_r$.
Similarly, for every $\lambda\in \DB_{2r}$, one has $\dht(\lambda)\leq r+1$,
and if $r>0$, then the equality is achieved for exactly one such diagram:
$(r \mid r+1)\in \YD_{r+1}$. Remark that when $r=0$,
the only element of $\DB_{0}$ is the empty diagram whose height equals $0$,
and our general upper bound for its height, which equals~$1$, is not attained.
This exceptional case will play
an important role when we discuss orthogonal Grassmannians.

\subsection{Weights}\label{ssec:weights}
We use the term \emph{weight} for an element of the dominant cone
$\rmP^+_{\sfG}$ of 
the weight lattice $\rmP_{\sfG}$
of a~(semi)simple algebraic group $\sfG$. When $\sfG=\SP(V)$ or $\sfG=\SO(V)$ is
of type $C_n$ or $B_n$/$D_n$, respectively, the set of partitions $\YD_n$ sits naturally in $\rmP^+_{\sfG}\subset \rmP_{\sfG}$.
Thus, we can speak of $\lambda\in\YD_n$ as of a weight of $\sfG$.
For any $\lambda\in \sfP^+_\sfG$,
we denote by $V^{\langle \beta \rangle}$ the corresponding irreducible representation.
Similarly, given a vector bundle $\CE$ on a scheme $X$ with the structure group $\sfG$, and a dominant weight $\lambda\in \sfP^+_\sfG$, we denote by $\CE^{\langle \lambda \rangle}$ the bundle associated with $\lambda$.
For instance, when $\sfG$ is of type $C_n$ or $B_n$/$D_n$, we can speak of $\CE^{\langle\lambda\rangle}$ for $\lambda\in\YD_n$.
In the following we will need a slightly more refined set than $\YD_n$,
so let us spell out the details.

In the following we denote by $\varepsilon_1, \varepsilon_2, \ldots, \varepsilon_n$ the standard basis of $\RR^n$,
and denote by $\rho$ the sum of the fundamental weights of $\sfG$.
We will need the following couple of definitions.

\begin{definition}
    A weight $\lambda\in \rmP_{\sfG}$ is called \emph{singular}
    if it is fixed by some nontrivial element of the Weyl group $W_{\sfG}$ of $\sfG$.
    A weight that is not singular will be called \emph{nonsingular}.
\end{definition}

\begin{definition}
    A weight $\lambda\in \rmP_{\sfG}$ is called \emph{strictly dominant} if
    $\lambda-\rho$ is dominant. Alternatively, a weight is strictly dominant
    if it is dominant and nonsingular.
\end{definition}

\subsubsection*{Type C}
Assume that $n\geq 2$, and let $\sfG = \SP_{2n}$. We identify
$\rmP_{\sfG}$ with $\ZZ^n\subset \RR^n$. The fundamental weights
are given by
\[
    \omega_1 = \varepsilon_1, \quad
    \omega_2 = \varepsilon_1 + \varepsilon_2, \quad \ldots, \quad
    \omega_n = \varepsilon_1 + \varepsilon_2 + \cdots + \varepsilon_n.   
\]
Their sum is
\[
    \rho = (n, n-1, \ldots, 2, 1)
\]
The dominant cone $\rmP^+_{\sfG}$ is identified with $\YD_n$:
\[
    \rmP^+_{\sfG} = \{ \lambda\in \ZZ^n \mid \lambda_1\geq \lambda_2\geq \cdots \geq \lambda_n\geq 0 \}.
\]
The Weyl group is the group of signed permutations---a semidirect product of the symmetric group $S_n$ and
the abelian group $(\ZZ/2\ZZ)^n$ which acts on $\rmP_{\sfG}$ by permutations
and sign changes. Thus, a weight $\lambda\in\rmP_{\sfG}=\ZZ^n$ is singular
if and only if the absolute values of two elements of $\lambda$ are the same
or one of the elements of $\lambda$ equals $0$.
We conclude that $\lambda$ is strictly dominant if and only if
\[
    \lambda_1 > \lambda_2 > \cdots > \lambda_n > 0.
\]

\subsubsection*{Type B}
Assume that $n\geq 2$, and let $\sfG = \SO_{2n+1}$. We identify
$\rmP_{\sfG}$ with $\ZZ^n + \tfrac{1}{2}\ZZ^n\subset \RR^n$. The fundamental weights
are given by
\[
    \omega_1 = \varepsilon_1, \quad
    \omega_2 = \varepsilon_1 + \varepsilon_2, \quad \ldots, \quad
    \omega_{n-1} = \varepsilon_1 + \varepsilon_2 + \cdots + \varepsilon_{n_1}, \quad   
    \omega_n = \frac{1}{2}(\varepsilon_1 + \varepsilon_2 + \cdots + \varepsilon_n).   
\]
Their sum is
\[
    \rho = \left(n-\tfrac{1}{2},\, n-\tfrac{3}{2},\, \ldots,\, \tfrac{3}{2},\, \tfrac{1}{2}\right)
\]
The dominant cone $\rmP^+_{\sfG}$ is identified with
\[
    \rmP^+_{\sfG} = \left\{ \lambda\in \ZZ^n + \tfrac{1}{2} \ZZ^n \mid \lambda_1\geq \lambda_2\geq \cdots \geq \lambda_n\geq 0 \right\}.
\]
Thus, $\YD_n\subset \rmP^+_{\sfG}$.

The Weyl group is again the group of signed permutations. Thus, a weight $\lambda\in\rmP_{\sfG}=\ZZ^n$ is singular
if and only if the absolute values of two elements of $\lambda$ are the same
or one of the elements of $\lambda$ equals $0$.
We conclude that $\lambda$ is strictly dominant if and only if
\[
    \lambda_1 > \lambda_2 > \cdots > \lambda_n > 0.
\]

\subsubsection*{Type D}
Assume that $n\geq 3$, and let $\sfG = \SO_{2n}$. We identify
$\rmP_{\sfG}$ with $\ZZ^n + \tfrac{1}{2}\ZZ^n\subset \RR^n$. The fundamental weights
are given by
\begin{gather*}
    \omega_1 = \varepsilon_1, \quad
    \omega_2 = \varepsilon_1 + \varepsilon_2, \quad \ldots, \quad
    \omega_{n-2} = \varepsilon_1 + \varepsilon_2 + \cdots + \varepsilon_{n_2}, \\
    \omega_{n-1} = \frac{1}{2}(\varepsilon_1 + \cdots + \varepsilon_{n-1} - \varepsilon_n),\quad
    \omega_n = \frac{1}{2}(\varepsilon_1 + \cdots + 
    \varepsilon_{n-1} + \varepsilon_n).   
\end{gather*}
Their sum is
\[
    \rho = \left(n-1,\, n-2,\, \ldots,\, 1,\, 0\right)
\]
The dominant cone $\rmP^+_{\sfG}$ is identified with
\[
    \rmP^+_{\sfG} = \left\{ \lambda\in \ZZ^n + \tfrac{1}{2} \ZZ^n \mid \lambda_1\geq \lambda_2\geq \cdots \geq |\lambda_n|\geq 0 \right\}.
\]
Thus, $\YD_n\subset \rmP^+_{\sfG}$.

The Weyl group is the group of signed permutations with an even number of sign
changes.
We conclude that $\lambda$ is strictly dominant if and only if
\[
    \lambda_1 > \lambda_2 > \cdots > |\lambda_n| \geq 0.
\]

We adopt the following slightly strange definition.

\begin{definition}
    Let $\sfG$ be of type $B_n$, $C_n$, or $D_n$.
    A weight $\lambda\in\rmP^+_{\sfG}$ is called \emph{integral} if $\lambda\in \rmP^+_{\sfG}\cap \ZZ^n$. We denote the set of integral dominant
    weights by $\YDt_n$.
\end{definition}

From the discussion above, we deduce that in types $C_n$ and $B_n$
one has $\YDt_n=\YD_n$. In type $D_n$, we have $\YD_n\subsetneq \YDt_n$, and
\[
    \YDt_n = \left\{ \lambda\in \ZZ^n \mid \lambda_1\geq \lambda_2\geq \cdots \geq |\lambda_n|\geq 0 \right\}.
\]

\subsection{Schur functor decompositions}
Given a Young diagram $\lambda$, we denote by $\Sigma^\lambda$ the corresponding Schur functor.
We follow the convention under which
$\Sigma^{(t\mid 1)} = S^t$ is the $t$-th symmetric power functor,
and $\Sigma^{(1\mid t)}=\Lambda^t$ is the $t$-th exterior power functor.
When $\lambda\in\rmP^+_{\GL_k}$, where we identify
\[
    \rmP^+_{\GL_k} = \{\lambda \in \ZZ^k \mid \lambda_1\geq \lambda_2\geq \cdots \geq \lambda_k\}.
\]
For a vector bundle $\CE$ of rank $k$, considered with the structure group $\GL_k$,
the bundle $\Sigma^\lambda\CE$ is isomorphic to $\CE^{\langle \lambda \rangle}$,
where $\lambda$ is considered as a dominant weight $\lambda\in\YD_k\subset \rmP^+_{\GL_k}$.

For the convenience of the reader, we present a few well known identities
for Schur functors applied to various bundles which hold in characteristic zero.
The first two are concerned
with taking exterior powers of extensions and tensor products.
In the following, whenever $\Sigma^\lambda$ is applied to a vector
bundle of rank smaller than $\dht(\lambda)$, the result is zero.

\begin{lemma}\label{lm:lambda-f}
    Let $0\to \CE\to \CF\to \CG\to$ be a short exact sequence of vector bundles
    on a scheme $X$, and let $q\geq 0$ be an integer.
    There is a filtration $0= \CF^{-1} \subset \CF^0\subset \cdots \subset \CF^q=\Lambda^q\CF$ on $\Lambda^q\CF$
    with the subqoutients
    \[
        \CF^j/\CF^{j-1}\simeq \Lambda^{q-j}\CE\otimes \Lambda^j\CG
    \]
    for all $j = 0, \dots, q$.
\end{lemma}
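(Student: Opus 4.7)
The plan is to construct the filtration explicitly via wedge multiplication and identify the subquotients by local splitting. First, I would define
\[
    \CF^j = \Im\!\left( \Lambda^{q-j}\CE \otimes \Lambda^j \CF \xrightarrow{\;\wedge\;} \Lambda^q\CF \right),
\]
where the map uses the inclusion $\CE\hookrightarrow \CF$ on the first tensor factor. Setting $\CF^{-1}=0$ and observing $\CF^q=\Lambda^q\CF$ is automatic. The chain $\CF^{j-1}\subset \CF^j$ follows because any pure tensor $e_1\wedge\cdots\wedge e_{q-j+1}\wedge f_1\wedge\cdots\wedge f_{j-1}$ can be reparenthesized as $e_1\wedge\cdots\wedge e_{q-j}\wedge(e_{q-j+1}\wedge f_1\wedge\cdots\wedge f_{j-1})$, an element in $\CF^j$.

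Next I would produce a canonical surjection $\Lambda^{q-j}\CE\otimes \Lambda^j\CG \twoheadrightarrow \CF^j/\CF^{j-1}$. The kernel of the natural surjection $\Lambda^j\CF \twoheadrightarrow \Lambda^j\CG$ is generated by elements having at least one factor lying in $\CE$; wedging such elements with a section of $\Lambda^{q-j}\CE$ yields a pure tensor with at least $q-j+1$ factors from $\CE$, i.e.\ a section of $\CF^{j-1}$. Hence the composite $\Lambda^{q-j}\CE\otimes \Lambda^j\CF\to \CF^j/\CF^{j-1}$ descends to $\Lambda^{q-j}\CE\otimes \Lambda^j\CG$, giving the desired surjection by construction of~$\CF^j$.

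To show this surjection is an isomorphism I would work Zariski-locally on $X$, where the short exact sequence splits as $\CF\simeq \CE\oplus \CG$. Under any such splitting, the standard decomposition
\[
    \Lambda^q(\CE\oplus \CG) \;\simeq\; \bigoplus_{j=0}^q \Lambda^{q-j}\CE\otimes \Lambda^j\CG
\]
identifies $\CF^j$ with $\bigoplus_{j'\leq j}\Lambda^{q-j'}\CE\otimes \Lambda^{j'}\CG$, so the induced map on subquotients is the identity on $\Lambda^{q-j}\CE\otimes \Lambda^j\CG$. This also shows that each $\CF^j$ is a subbundle of the expected rank. Since the surjection is defined naturally (independent of any local splitting), it is globally an isomorphism, completing the proof.

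There is no real obstacle: the only subtle point is verifying that the image $\CF^j$ is a locally free subsheaf, which is handled by the local splitting argument. The characteristic-zero assumption of the paper plays no role here; the lemma is a purely multilinear fact.
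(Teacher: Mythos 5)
Your proof is correct. The paper does not actually prove Lemma~\ref{lm:lambda-f}: it is stated as a well-known identity (alongside the Weyman citations for the neighbouring lemmas), so there is no argument to diverge from, and your construction---defining $\CF^j$ as the image of $\Lambda^{q-j}\CE\otimes\Lambda^j\CF\xrightarrow{\wedge}\Lambda^q\CF$, descending the wedge map to a surjection from $\Lambda^{q-j}\CE\otimes\Lambda^j\CG$, and checking it is an isomorphism via a local splitting $\CF\simeq\CE\oplus\CG$---is exactly the standard proof of this fact, valid over any base without the characteristic-zero hypothesis.
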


\begin{lemma}[{\cite[Corollary~2.3.3]{Weyman}}]\label{lm:lambda-t}
    Let $\CE$ and $\CF$ be vector bundles on a scheme $X$, and let $q\geq 0$
    be an integer. There is a direct sum decomposition
    \[
        \Lambda^q(\CE\otimes \CF) = \bigoplus_{|\lambda|=q}\Sigma^\lambda\CE\otimes \Sigma^{\lambda^T}\CF.
    \]
\end{lemma}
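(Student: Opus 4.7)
The plan is to derive the identity from the dual Cauchy identity for symmetric functions via the Schur--Weyl duality for $\GL(\CE)\times \GL(\CF)$. Since Schur functors are defined functorially on vector bundles and commute with pullback, it suffices to establish the decomposition in the universal setting, i.e.\ for vector spaces $E$ and $F$ over $\kk$; the global statement on a scheme $X$ then follows by applying the associated bundle construction to the principal $\GL(\rank\CE)\times\GL(\rank\CF)$-bundle of pairs of frames of $\CE$ and $\CF$.

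For vector spaces, the space $\Lambda^q(E\otimes F)$ carries a natural action of $\GL(E)\times\GL(F)$. Since we are in characteristic $0$, this group is reductive and the representation decomposes as a direct sum of tensor products of irreducibles $V_E^{\langle \mu \rangle}\otimes V_F^{\langle \nu \rangle}$, so it suffices to compare characters. First I would use the standard generating-series identity
\[
    \sum_{q\geq 0} t^q \cdot \mathrm{ch}\bigl(\Lambda^q(E\otimes F)\bigr) = \prod_{i,j}(1+t x_i y_j),
\]
where $x_i$ and $y_j$ are the formal eigenvalues of $E$ and $F$. Extracting the coefficient of $t^q$ gives $e_q(x_iy_j)$, the elementary symmetric polynomial evaluated at all products $x_iy_j$.

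The key input is then the dual Cauchy identity
\[
    \prod_{i,j}(1+x_iy_j) = \sum_{\lambda} s_\lambda(x)\, s_{\lambda^T}(y),
\]
a standard identity in the theory of symmetric functions (see for instance Macdonald's book). Extracting the homogeneous part of total degree $q$ on both sides yields
\[
    e_q(x_iy_j) = \sum_{|\lambda|=q} s_\lambda(x)\, s_{\lambda^T}(y).
\]
Since $s_\lambda(x)$ is the character of $\Sigma^\lambda E$ as a $\GL(E)$-representation, and analogously for $F$, the character identity lifts to the isomorphism
\[
    \Lambda^q(E\otimes F) \simeq \bigoplus_{|\lambda|=q} \Sigma^\lambda E\otimes \Sigma^{\lambda^T} F,
\]
with the convention that terms with $\dht(\lambda)>\rank E$ or $\dht(\lambda^T)>\rank F$ vanish automatically.

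Globalizing via the splitting principle (or equivalently by noting that the above decomposition is $\GL(E)\times\GL(F)$-equivariant and hence descends to associated bundles) yields the bundle statement. The only real subtlety is ensuring that the isomorphism is natural enough to glue; this is automatic because both sides are defined as Schur functors applied to fixed bundles, and the Cauchy decomposition is realized by explicit Young symmetrizers, which are natural transformations of functors on $\kk$-vector spaces in characteristic~$0$. There is no serious obstacle here, the statement being classical; the point of including it is fixing notation for later use.
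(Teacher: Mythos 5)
Your argument is correct, but it is not what the paper does: the paper offers no proof at all and simply cites Weyman's Corollary~2.3.3, where the Cauchy decomposition is obtained by an explicit, characteristic-free construction (a natural filtration of $\Lambda^q(\CE\otimes\CF)$ with Schur-functor subquotients, built from standard tableaux/straightening; in characteristic $0$ the filtration splits into the stated direct sum). Your route---reduce to vector spaces, decompose $\Lambda^q(E\otimes F)$ as a $\GL(E)\times\GL(F)$-module by comparing characters against the dual Cauchy identity, then globalize by functoriality/equivariance---is a valid alternative in characteristic $0$, which is all this paper needs. Two bookkeeping points: (i) when extracting the relevant piece of $\prod_{i,j}(1+x_iy_j)$ you should sort by bidegree (degree $q$ in the $x$-variables and degree $q$ in the $y$-variables), rather than ``total degree $q$''; (ii) a character comparison only gives an abstract isomorphism, so the globalization genuinely requires the naturality you invoke at the end---either the equivariant splitting of $E^{\otimes q}\otimes F^{\otimes q}$ by idempotents of $\kk[S_q]$, or the remark that both sides are bundles associated to the frame bundle via a fixed isomorphism of $\GL_e\times\GL_f$-representations. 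You address this, so there is no gap; the trade-off is that Weyman's construction works over arbitrary commutative rings and produces a canonical filtration, while your argument is shorter but tied to characteristic $0$ and reductivity, which suffices here.
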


Next, we will need two basic examples of plethysm.
Recall that
$\RB_{2q}$ and $\DB_{2q}$ denote the sets of right and down balanced diagrams
of size $2q$, respectively.

\begin{lemma}[{\cite[Proposition~2.3.9]{Weyman}}]\label{lm:lambda-s2}
    Let $\CE$ be a vector bundle on a scheme $X$,
    and let $q\geq 0$ be an integer. There is a direct sum decomposition
    \[
        \Lambda^q (S^2\CE) = \bigoplus_{\mu\in \RB_{2q}}\Sigma^\mu \CE.
    \]
\end{lemma}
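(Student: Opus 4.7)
The plan is to reduce this plethysm identity to a classical character computation. First, since both sides are polynomial functors in $\CE$, it suffices to verify the decomposition for $\CE = V$ a fixed vector space of dimension at least $2q$, so that every $\Sigma^\mu V$ with $\mu \in \RB_{2q}$ is nonzero. Working in characteristic zero, the category of polynomial $\GL(V)$-representations is semisimple, so the isomorphism class of each side is determined by its $\GL(V)$-character.

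Next, I would compute the character of the left-hand side. If $x_1, \ldots, x_n$ denote the diagonal characters of a maximal torus acting on $V$, then the character of $S^2 V$ is the complete homogeneous symmetric polynomial $h_2(x) = \sum_{i \leq j} x_i x_j$, and consequently
\[
\ch\bigl(\Lambda^q(S^2 V)\bigr) = e_q\bigl(\{x_i x_j\}_{i \leq j}\bigr),
\]
which is by definition the plethysm $e_q[h_2]$. The character of the right-hand side is $\sum_{\mu \in \RB_{2q}} s_\mu$ by the Weyl character formula.

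I would then invoke the classical plethysm identity of Littlewood,
\[
e_q[h_2] \;=\; \sum_{\mu} s_\mu,
\]
where the sum is over partitions of $2q$ whose Frobenius coordinates are of the form $(a_1, \ldots, a_d \mid a_1-1, \ldots, a_d-1)$---that is, precisely the elements of $\RB_{2q}$. Matching characters term by term yields the claimed decomposition of $\Lambda^q(S^2 \CE)$.

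The main obstacle is the plethysm identity itself. The most direct route packages all values of $q$ into the generating identity
\[
\sum_{q \geq 0} t^{q}\, e_q[h_2] \;=\; \prod_{i \leq j}\bigl(1 + t\,x_i x_j\bigr),
\]
and then uses a Cauchy-type expansion (or a combinatorial bijection on semistandard tableaux, as in Macdonald) to recognise the right-hand side as $\sum_\mu t^{|\mu|/2} s_\mu$ summed over right balanced $\mu$. A slightly more self-contained alternative, using only ingredients already in the paper, is to split $V \otimes V = S^2 V \oplus \Lambda^2 V$, expand $\Lambda^q(V \otimes V)$ by Lemma~\ref{lm:lambda-t}, combine with the parallel formula for $\Lambda^q(\Lambda^2 V)$ indexed by $\DB_{2q}$, and then solve for $\Lambda^q(S^2 V)$ inductively in $q$; the inductive step amounts to the transpose symmetry $\RB_{2q} \leftrightarrow \DB_{2q}$ recorded in~\eqref{eq:balanced}.
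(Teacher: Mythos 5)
The paper itself offers no proof of this lemma---it is quoted directly from \cite[Proposition~2.3.9]{Weyman}---so the comparison is really between your sketch and that classical reference. Your main route is the standard argument and is correct in outline: both sides are polynomial functors of degree $2q$ in $\CE$, so (the Schur-functor decomposition being natural and independent of the rank once it is at least $2q$) it suffices to match $\GL(V)$-characters for a single $V$ of large dimension, where the left-hand side is the plethysm $e_q[h_2]$ and the right-hand side is $\sum_{\mu\in\RB_{2q}}s_\mu$; Littlewood's identity $\prod_{i\leq j}(1+t\,x_ix_j)=\sum_\mu t^{|\mu|/2}s_\mu$, summed exactly over diagrams whose diagonal arms exceed the legs by one, i.e. over $\bigcup_q\RB_{2q}$, then closes the argument. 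In effect you are re-deriving the cited statement from the classical symmetric-function identity it rests on, which is a perfectly adequate (and essentially the same) route as the citation.

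One caution: your ``more self-contained alternative'' at the end is not a proof as stated. Expanding $\Lambda^q(V\otimes V)=\bigoplus_{a+b=q}\Lambda^a(S^2V)\otimes\Lambda^b(\Lambda^2V)$ and comparing with Lemma~\ref{lm:lambda-t} leaves, at each inductive stage, the two new unknowns $\Lambda^q(S^2V)$ and $\Lambda^q(\Lambda^2V)$ entering only through their direct sum, so the recursion cannot separate them; the transpose symmetry you invoke (in character terms, $\omega(e_q[h_2])=e_q[e_2]$) merely exchanges the two unknowns rather than determining either, and is itself a plethysm compatibility you would need to prove. Moreover, appealing to ``the parallel formula for $\Lambda^q(\Lambda^2V)$ indexed by $\DB_{2q}$'' is circular, since that is precisely the companion statement (Lemma~\ref{lm:lambda-l2}) cited without proof from the same source. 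So keep the character computation as the actual proof, or simply cite Littlewood/Macdonald/Weyman as the paper does.
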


\begin{lemma}[{\cite[Proposition~2.3.9]{Weyman}}]\label{lm:lambda-l2}
    Let $\CE$ be a vector bundle on a scheme $X$,
    and let $q\geq 0$ be an integer. There is a direct sum decomposition
    \[
        \Lambda^q (\Lambda^2\CE) = \bigoplus_{\mu\in \DB_{2q}}\Sigma^\mu \CE.
    \]
\end{lemma}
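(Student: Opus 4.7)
My plan is to reduce the claim to a classical plethystic character identity and then compute via a generating series. By the splitting principle it suffices to verify the isomorphism when $\CE = \bigoplus_{i=1}^n L_i$ is a direct sum of line bundles; setting $x_i = c_1(L_i)$, both sides are polynomial $\GL_n$-representations in characteristic zero and are therefore determined by their characters. The character of $\Lambda^q(\Lambda^2\CE)$ is the plethysm of $e_q$ at the variables $\{x_i x_j : i < j\}$, while the character of $\bigoplus_\mu \Sigma^\mu \CE$ is $\sum_{\mu \in \DB_{2q}} s_\mu(x)$, where $s_\mu$ denotes the Schur polynomial. The lemma thus reduces to the symmetric-function identity
\[
    e_q[e_2](x) \;=\; \sum_{\mu \in \DB_{2q}} s_\mu(x).
\]

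To prove this identity I would assemble it into the generating series
\[
    \sum_{q \geq 0} t^q\, e_q[e_2](x) \;=\; \prod_{1 \leq i < j \leq n}\bigl(1 + t\, x_i x_j\bigr)
\]
and expand the right-hand side directly into Schur polynomials via Weyl's character formula on the maximal torus of $\GL_n$ (writing each Schur polynomial as a quotient of alternating sums and matching coefficients). The main combinatorial step is to show that the partitions appearing in the Schur expansion are exactly those whose hook decomposition $(a_\bullet \mid b_\bullet)$ satisfies the down-balanced constraint $b_i = a_i + 1$, each with multiplicity one; the strictness $i < j$ in each factor of the product is precisely what forces the shift $b_i - a_i = 1$, while the squarefree character of the factor $(1 + t\, x_i x_j)$ in the $t$-grading forces the multiplicity to be one.

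The hardest part is verifying this down-balanced fingerprint rigorously---both that no other diagrams appear and that each down-balanced diagram appears exactly once. As a sanity check, the parallel identity governing the companion Lemma~\ref{lm:lambda-s2} involves the generating series $\prod_{i \leq j}(1 + t\, x_i x_j)$, whose additional diagonal factors $1 + t\, x_i^2$ reverse the sign of the shift and produce the opposite condition $a_i - b_i = 1$, i.e., the right-balanced diagrams $\RB_{2q}$. The complete combinatorial verification of either statement---matching the explicit product with the sum over balanced hook shapes---is carried out in detail in \cite[Proposition~2.3.9]{Weyman}.
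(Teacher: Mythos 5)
The paper does not actually prove this lemma: it is quoted verbatim from \cite[Proposition~2.3.9]{Weyman}, so the only ``proof'' on record is that citation. Your outline follows the standard route to that classical result---reduce to a character computation and invoke Littlewood's identity $\prod_{i<j}(1+t\,x_ix_j)=\sum_{\mu}t^{|\mu|/2}s_\mu(x)$, the sum running over diagrams whose hook coordinates satisfy $b_i=a_i+1$---and everything you assert is true, including the sanity check that the diagonal factors of $\prod_{i\leq j}(1+t\,x_ix_j)$ flip the shift and give Lemma~\ref{lm:lambda-s2}. But the attempt has a gap exactly where you flag it: identifying the Schur expansion of that product with the down-balanced shapes $\DB_{2q}$, each occurring with multiplicity one, \emph{is} the entire content of the statement, and you close by deferring that step to the same \cite[Proposition~2.3.9]{Weyman} the paper cites. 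So what you have written is a correct framing of the known proof rather than an independent argument: if citing Weyman (or Littlewood/Macdonald) is allowed, the generating-function preamble is superfluous; if it is not, the decisive combinatorial step is missing.

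A smaller point on the reduction itself: the splitting principle as you invoke it (pulling back until $\CE$ splits and setting $x_i=c_1(L_i)$) only controls Chern-class or K-theoretic data and cannot by itself produce a direct-sum decomposition of bundles on $X$, since an isomorphism established on the flag bundle does not descend. The correct mechanism, which you half-state, is functorial: both sides are values on $\CE$ of polynomial functors of degree $2q$, equivalently are bundles associated to $\GL_k$-representations ($k=\rank\CE$) via the frame bundle, and in characteristic zero such representations are semisimple and determined by their characters; the character identity therefore gives an isomorphism of functors, hence of the associated bundles, with no splitting argument needed. With that rephrasing the reduction is sound, and the remaining burden is precisely the Littlewood identity discussed above.
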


The following is a simple consequence of the celebrated Littlewood--Richardson
rule, which allows decomposing products of Schur functors. We do not present
the rule here (nor its particular case know as Pieri's rule),
and refer the reader to~\cite{FultonYT} for details.
\begin{lemma}\label{lm:lr-h}
    Let $\alpha,\beta\in \YD_k$. For any rank $k$ bundle $\CE$ on a scheme $X$
    and any irreducible summand
    $\Sigma^\gamma\CE \subset \Sigma^\beta\CE \otimes \Sigma^\alpha\CE$
    one has
    \[
        \dht (\gamma) \leq \min(\dht (\alpha) + \dht (\beta),\,k).
    \]
\end{lemma}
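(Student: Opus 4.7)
The plan is to invoke the Littlewood--Richardson rule, which gives a decomposition
\[
    \Sigma^\alpha\CE\otimes\Sigma^\beta\CE \;=\; \bigoplus_{\gamma}\, c_{\alpha\beta}^{\gamma}\,\Sigma^\gamma\CE,
\]
where $c_{\alpha\beta}^{\gamma}$ counts Littlewood--Richardson tableaux of skew shape $\gamma/\alpha$ and content $\beta$. The summand $\Sigma^\gamma\CE$ appears with positive multiplicity only if $\alpha\subseteq\gamma$ and at least one such tableau exists. The upper bound $\dht(\gamma)\leq k$ is automatic, since $\Sigma^\gamma\CE$ vanishes as soon as $\dht(\gamma)$ exceeds the rank of $\CE$.

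For the remaining bound $\dht(\gamma)\leq\dht(\alpha)+\dht(\beta)$, I would reason directly on the combinatorics of any such LR tableau $T$ of shape $\gamma/\alpha$ with content $\beta$. Being semistandard, $T$ has strictly increasing columns, so each column contains distinct entries. Since $\beta$ has only $\dht(\beta)$ nonzero parts, the entries of $T$ lie in $\{1,2,\ldots,\dht(\beta)\}$, and hence each column of $\gamma/\alpha$ contains at most $\dht(\beta)$ boxes. Applied to the leftmost column, this gives
\[
    \dht(\gamma) \;=\; \gamma^{T}_{\,1} \;\leq\; \alpha^{T}_{\,1} + \dht(\beta) \;=\; \dht(\alpha)+\dht(\beta),
\]
which is the claimed inequality.

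As the paper itself notes, the lemma is a simple consequence of the Littlewood--Richardson rule, so there is no real obstacle; the only content is the elementary column-count argument above. By the symmetry of the tensor product, the same bound is obtained by running the argument with the roles of $\alpha$ and $\beta$ exchanged, which provides a sanity check but is not needed.
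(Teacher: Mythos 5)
Your proof is correct and follows exactly the route the paper intends: the paper states the lemma as ``a simple consequence of the Littlewood--Richardson rule'' and cites Fulton without giving details, and your column-counting argument (entries of an LR tableau of content $\beta$ lie in $\{1,\ldots,\dht(\beta)\}$, columns strictly increase, so the first column of $\gamma/\alpha$ has at most $\dht(\beta)$ boxes) together with the vanishing of $\Sigma^\gamma\CE$ for $\dht(\gamma)>k$ supplies precisely those details. No gaps.
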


\subsection{Spectral sequence of a filtered bundle}
Most of our cohomology computations will rely on the use of the spectral sequence
whose first page consists of the cohomology groups of the graded pieces
of a~filtration on a vector bundle. As usual, there are many ways to index
spectral sequences. We adopt the following convention. Let
$0= \CF^{-1} \subset \CF^0\subset \cdots \subset \CF^j=\CF$ be a filtration
on a vector bundle $\CF$ on a~scheme~$X$. There is a spectral sequence
$E^{q,i}_r$ with differential $d_r$ of degree $(-r, 1)$ and the first page
\begin{equation}\label{eq:ss}
    E^{q,i}_1 = H^i\big(X, \CF^q/\CF^{q-1}\big),
\end{equation}
which converges to cohomology of $X$; that is, there is a filtration on $H^i(X, \CF)$ with
the associated graded isomorphic to $\oplus_q E^{q, i}_{\infty}$.
We refer to~\cite[\href{https://stacks.math.columbia.edu/tag/0BKK}{Tag 0BKK}]{stacks-project}
for details.

\subsection{Borel--Bott--Weil and vanishing}
From now on,
let $X$ be a nonspecial isotropic Grassmannian of the form $\IGr(k, V)$ or $\OGr(k, V)$,
and let $n=\lfloor(\dim V) / 2\rfloor$.
We denote by $\sfG$ the corresponding simple group.
Whether the corresponding form $\omega$ on $V$ is symplectic
or symmetric, there is a closed embedding $X\hookrightarrow \Gr(k, V)$, and we denote
by $\CU$ the restriction of the rank $k$ tautological subbundle $\CU\subset V$.
The form $\omega$ induces an isomorphism $V^*\stackrel{\sim}{\to} V$,
which induces an isomorphism $(V/\CU)^*\stackrel{\sim}{\to} \CU^\perp$.
The isotropic condition implies that the embedding $\CU\hookrightarrow V$ factors
through $\CU^\perp$. The form $\omega$ induces a non-degenerate bilinear
form on $\CUpU$, which is symplectic (resp. symmetric).
Whether $\CUpU$ is symplectic or orthogonal, $\YD_{n-k}$ naturally sits in
the dominant cone of
its structure group (which is of type $B_{n-k}$, $C_{n-k}$, or $D_{n-k}$).
Recall that given $\lambda\in\YDt_{n-k}$, we denote
by $(\CUpU)^{\langle \lambda \rangle}$ the associated vector bundle
(see~Section~\ref{ssec:weights}).

\begin{example}
    If $\lambda = (2)$, then
    \[
        (\CUpU)^{\langle \lambda \rangle}\simeq \begin{cases}
            S^2(\CUpU) & \text{if } \CUpU \text{ is symplectic},\\
            S^2(\CUpU)/\CO & \text{if } \CUpU \text{ is orthogonal}.
        \end{cases}
    \]
    If $\lambda = (1,1)$, then
    \[
        (\CUpU)^{\langle \lambda \rangle}\simeq \begin{cases}
            \Lambda^2(\CUpU)/\CO & \text{if } \CUpU \text{ is symplectic},\\
            \Lambda^2(\CUpU) & \text{if } \CUpU \text{ is orthogonal}.
        \end{cases}
    \]
\end{example}

The following theorem is essentially a reformulation of the Borel--Bott--Weil theorem
for the cases that are of our interest. Addition and subtraction of sequences
that appear in it is done term-wise.

\begin{theorem}[\text{\cite[Corollaries~(4.3.4), (4.3.7), (4.3.9)]{Weyman}}]\label{thm:bbw}
    Assume that $k \leq n-2$.
    Let $\alpha\in\YD_k$, and $\beta\in\YDt_{n-k}$.
    Denote by $(\alpha, \beta)\in\ZZ^n$ the sequence
    $(\alpha, \beta)=(\alpha_1,\alpha_2,\ldots,\alpha_k,\beta_1, \beta_2,\ldots,\beta_{n-k})$,
    and let $\rho$ be the sum of fundamental weights, see Section~\ref{ssec:weights}. Consider the vector bundle $(\CUpU)^{\langle \beta \rangle}\otimes \Sigma^\alpha\CU^*$.
    \begin{enumerate}
        \item If $\rho+(\alpha, \beta)$ is a singular weight of $\sfG$,
        then
        \[
            H^\bullet(X,(\CUpU)^{\langle \beta \rangle}\otimes \Sigma^\alpha\CU^*)=0.
        \]
        \item If $\rho+(\alpha, \beta)$ is nonsingular weight of $\sfG$,
        let $\ell$ denote the number of inversions in it (considered as a sequence of distinct numbers), and let $\sigma$ be the unique permutation such that $\sigma(\rho+(\alpha, \beta))$ is strictly dominant.
        Then
        \[
            H^\bullet(X,(\CUpU)^{\langle \beta \rangle}\otimes \Sigma^\alpha\CU^*) = V^{\langle\sigma(\rho+(\alpha, \beta)) - \rho\rangle}[-\ell].
        \]
    \end{enumerate}
\end{theorem}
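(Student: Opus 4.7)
The plan is to deduce this theorem from the classical Borel--Bott--Weil theorem applied to the homogeneous variety $X = \sfG/\sfP_k$. I would first identify $(\CUpU)^{\langle \beta \rangle} \otimes \Sigma^\alpha \CU^*$ as the associated vector bundle on $X$ corresponding to the irreducible representation of the Levi subgroup $\sfL_k = \GL_k \times \sfG'$ of $\sfP_k$ with highest weight $(\alpha, \beta)$ in the coordinates of Section~\ref{ssec:weights}: the factor $\Sigma^\alpha \CU^*$ supplies the first $k$ coordinates via the $\GL_k$ part of the Levi, while $(\CUpU)^{\langle \beta \rangle}$ supplies the last $n-k$ coordinates via the classical factor $\sfG'$ of type $B_{n-k}$, $C_{n-k}$, or $D_{n-k}$. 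Matching the duality conventions so that $\CU^*$ (rather than $\CU$) corresponds to nonnegative entries $\alpha_i$ is essentially bookkeeping with Weyman's conventions.

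With this identification, the classical Borel--Bott--Weil theorem yields the two alternatives immediately: vanishing of all cohomology when $\rho + (\alpha, \beta)$ is singular for $W_\sfG$, and $V^{\langle w(\rho + (\alpha, \beta)) - \rho \rangle}$ concentrated in degree $\ell(w)$ otherwise, where $w$ is the unique element of $W_\sfG$ making $\rho + (\alpha, \beta)$ strictly dominant and $\ell(w)$ is its Coxeter length.

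The remaining reformulation is the assertion that $w$ is in fact realized by a pure permutation $\sigma$ (no sign changes) and that $\ell(w)$ equals the number of inversions of the sequence $\rho + (\alpha, \beta)$ viewed as a tuple of distinct numbers. In types $B_n$ and $C_n$, every coordinate of $\rho$ is strictly positive and $\alpha_i, \beta_j \geq 0$, so every entry of $\rho + (\alpha, \beta)$ is strictly positive and only sorting is needed. In type $D_n$ the first $n-1$ entries are strictly positive, while the last entry equals $\beta_{n-k}$ and may be negative; the dominance condition on $\beta \in \YDt_{n-k}$ forces $|\beta_{n-k}| \leq \beta_{n-k-1} < \beta_{n-k-1}+1$, which is the second-to-last entry of $\rho + (\alpha, \beta)$, so sorting by absolute value places $\beta_{n-k}$ in the last slot with its own sign, already compatible with the type $D$ strictly dominant condition $\lambda_1 > \cdots > |\lambda_n| \geq 0$. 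Hence no sign changes are needed and the length of the resulting pure permutation coincides with its inversion count.

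The main obstacle I anticipate is the careful matching of sign conventions between the geometric formulation via $\CU$ and $\CUpU$ and the weight-theoretic conclusion of Borel--Bott--Weil, together with ruling out the sign-change part of $W_\sfG$ in type $D$; once these points are settled, the statement is precisely the content of the Weyman corollaries cited, so no further work is required.
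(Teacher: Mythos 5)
Your strategy---realize $(\CUpU)^{\langle \beta \rangle}\otimes\Sigma^\alpha\CU^*$ as the homogeneous bundle attached to the Levi weight $(\alpha,\beta)$ and invoke classical Borel--Bott--Weil---is exactly how the cited corollaries of Weyman are obtained (the paper gives no proof beyond that citation), and your reduction works in types $B$ and $C$: there every entry of $\rho+(\alpha,\beta)$ is strictly positive, so a regular weight is sorted by a pure permutation whose type-$B/C$ Coxeter length is indeed its inversion number.

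The gap is in type $D$, precisely at the step you flag. Your argument that no sign changes are needed only compares $\lvert\beta_{n-k}\rvert$ with the adjacent entry $\beta_{n-k-1}+1$ of the $\beta$-block; it says nothing about the $k$ entries $n-i+\alpha_i$ coming from the $\GL_k$-block, which can be smaller than $\lvert\beta_{n-k}\rvert$. Concretely, take $X=\OGr(3,12)$ (so $n=6$, $k=3$), $\alpha=0$ and $\beta=(6,6,-6)\in\YDt_3$. Then $\rho+(\alpha,\beta)=(5,4,3,8,7,-6)$ is regular, but no permutation makes it strictly dominant: sorting gives $(8,7,5,4,3,-6)$, which violates $\lambda_{5}>\lvert\lambda_6\rvert$. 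Since elements of $W(D_6)$ flip an even number of signs and the number of negative entries (here one) is invariant, the strictly dominant representative is $(8,7,6,5,4,-3)$, reached only after two sign changes; the resulting cohomology is $V^{\langle(3,3,3,3,3,-3)\rangle}$ placed in degree $\ell(w)=9$, whereas the inversion count of the sequence is $6$. So your claim ``no sign changes are needed, and the length coincides with the inversion count'' is false for general $\beta\in\YDt_{n-k}$ with $\beta_{n-k}<0$, and the proof as written does not establish the statement in that range (one would either restrict to the case where the unique negative entry is minimal in absolute value---e.g.\ $\beta_{n-k}\ge 0$---or work with the full hyperoctahedral action, which is exactly the extra care taken in Weyman's Corollary (4.3.9) for the even orthogonal case). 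Note also that in general $\ell(w)$ is only bounded below by the inversion number, so the exact-degree claim, not just the sign bookkeeping, breaks in this case.
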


\begin{remark}\label{rm:eps}
    We will use the Borel--Bott--Weil theorem to prove some results
    on vanishing of cohomology groups of some vector bundles on $X$
    of the form $(\CUpU)^{\langle \beta \rangle}\otimes \Sigma^\alpha\CU^*$
    for $\alpha\in\YD_k$ and $\beta\in\YDt_{n-k}$. In order to make
    formulas uniform for all types, we introduce the following constant:
    \[
        \epsilon = \begin{cases}
            0 & \text{if }\sfG \text{ is of type } C, \\
            \tfrac{1}{2} & \text{if }\sfG \text{ is of type } B, \\
            1 & \text{if }\sfG \text{ is of type } D. \\
        \end{cases}
    \]
    Then, for $B_n$, $C_n$, and $D_n$, we can write
    \[
        \rho = (n-\epsilon, n-1-\epsilon, \ldots, 1-\epsilon).
    \]
\end{remark}

In the following two lemmas we assume that $1\leq k \leq n-2$.

\begin{lemma}\label{lm:iab00}
    Let $\alpha\in\YD_k$ and $\beta\in\YDt_{n-k}$ be such that $\dht(\alpha)+\beta_1 \leq k$.
    If $\beta \neq 0$, then
    \[
        H^\bullet(X,(\CUpU)^{\langle \beta \rangle}\otimes \Sigma^\alpha\CU^*)=0.
    \]
    If $\beta=0$, then
    \[
        H^{i}(X,(\CUpU)^{\langle \beta \rangle}\otimes \Sigma^\alpha\CU^*)=0\quad \text{for all}\quad i>0.
    \]
\end{lemma}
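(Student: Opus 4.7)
The plan is to invoke the Borel--Bott--Weil theorem (Theorem~\ref{thm:bbw}) on the weight $\rho+(\alpha,\beta)$. Using the uniform expression $\rho=(n-\epsilon,n-1-\epsilon,\ldots,1-\epsilon)$ from Remark~\ref{rm:eps}, its entries are $\alpha_i+n-i+1-\epsilon$ in the $\alpha$-block ($1\leq i\leq k$) and $\beta_{i-k}+n-i+1-\epsilon$ in the $\beta$-block ($k<i\leq n$).

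When $\beta=0$, the sequence $\rho+(\alpha,0)$ is strictly decreasing; its last entry $1-\epsilon$ is positive in types $B$, $C$ and zero in type $D$ (where the definition of strictly dominant allows it), so the weight is strictly dominant with zero inversions. Theorem~\ref{thm:bbw} then concentrates the cohomology in degree~$0$, giving the case $\beta=0$.

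When $\beta\neq 0$, set $h=\dht(\alpha)$, so $\alpha_i=0$ for $h<i\leq k$. In these positions $\rho+(\alpha,\beta)$ takes the consecutive values $n-h-\epsilon,\ldots,n-k+1-\epsilon$, while position $k+1$ carries $\beta_1+n-k-\epsilon$. The hypothesis $h+\beta_1\leq k$ together with $\beta_1\geq 1$ places the index $i_0:=k-\beta_1+1$ in $[h+1,k]$, and a direct substitution shows the entry at $i_0$ is also $\beta_1+n-k-\epsilon$. Hence $\rho+(\alpha,\beta)$ has two coinciding entries, is singular, and Theorem~\ref{thm:bbw} kills its cohomology.

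The only point needing care is in type $D$, where the Weyl group allows only even numbers of sign flips, so one must verify that the repeated entries are strictly positive for the transposition swapping $i_0$ and $k+1$ to already lie in $W_{\sfG}$ and fix the weight. With $k\leq n-2$, $\beta_1\geq 1$, and $\epsilon=1$ in type $D$, the common value $\beta_1+n-k-\epsilon$ is at least $2$, settling the singularity uniformly across types $B$, $C$, $D$.
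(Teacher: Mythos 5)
Your proof is correct and takes essentially the same route as the paper: apply Borel--Bott--Weil (Theorem~\ref{thm:bbw}) to $\rho+(\alpha,\beta)$, observing strict dominance (hence cohomology only in degree $0$) when $\beta=0$, and producing a repeated entry --- your index $i_0=k-\beta_1+1$ versus position $k+1$ --- when $\beta\neq 0$, exactly as the paper does by trapping $\gamma_{k+1}$ among the consecutive values $\gamma_{\dht(\alpha)+1},\ldots,\gamma_k$. Your extra type-$D$ check is harmless but unnecessary: the plain transposition exchanging the two equal coordinates already lies in the Weyl group of $D_n$ (no sign changes involved), so the weight is singular regardless of the positivity of the common value.
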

\begin{proof}
    Consider the~sequence $\gamma=\rho+(\alpha,\beta)$.
    If $\beta = 0$, then $(\alpha,\beta)$ is dominant. By Theorem~\ref{thm:bbw},
    the bundle $(\CUpU)^{\langle \beta \rangle}\otimes \Sigma^\alpha\CU^*\simeq \Sigma^\alpha\CU^*$
    has global sections equal to $V^{\langle \alpha \rangle}$ and no higher cohomology.
    Assume $\beta \neq 0$. In particular, $\beta_1\geq 1$.
    Since $\dht(\alpha)+\beta_1 \leq k$, it follows that $0\leq \dht(\alpha) \leq k-1$. Thus,
    \(
    (\gamma_{\dht(\alpha)+1}, \gamma_{\dht(\alpha)+2}, \ldots, \gamma_k) = 
    (\rho_{\dht(\alpha)+1}, \rho_{\dht(\alpha)+2}, \ldots, \rho_k) = 
    (n-\epsilon-\dht(\alpha), n-\dht(\alpha)-1-\epsilon, \ldots, n-k+1-\epsilon).
    \)
    Meanwhile, $\gamma_{k+1} = n-k-\epsilon + \beta_1$. Since $0 < \beta_1 \leq k-\dht(\alpha)$, we have
    $n-k-\epsilon < \gamma_{k+1} \leq n-\dht(\alpha)-\epsilon$. Thus,
    $\gamma_{k+1} = \gamma_i$ for some $\dht(\alpha)+1\leq i \leq k$,
    and the bundle $(\CUpU)^{\langle \beta \rangle}\otimes \Sigma^\alpha\CU^*$ is acyclic by Theorem~\ref{thm:bbw}.
\end{proof}

The next lemma guarantees vanishing of certain small cohomology groups of some equivariant vector bundles.

\begin{lemma}\label{lm:iab0}
    Let $\alpha\in\YD_k$ be such that $\dht(\alpha) \leq j$ for some integer $0\leq j<k$, and let $\beta\in\YDt_{n-k}$
    be arbitrary.
    Then
    \[
        H^i(X,(\CUpU)^{\langle \beta \rangle}\otimes \Sigma^\alpha\CU^*)=0
    \]
    for all $0 < i < k-j$.
\end{lemma}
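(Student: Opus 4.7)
The plan is a direct application of the Borel--Bott--Weil theorem (Theorem~\ref{thm:bbw}). Setting $\gamma = \rho + (\alpha, \beta)$, if $\gamma$ is singular then the cohomology vanishes entirely and there is nothing to prove; otherwise the cohomology is concentrated in a single degree $\ell$ equal to the number of positive roots $\eta\in\rmR^+$ with $\langle \gamma, \eta^\vee\rangle < 0$. It therefore suffices to show that $\ell = 0$ or $\ell \geq k - j$.

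The key observation is that $\dht(\alpha)\le j$ forces $\alpha_{j+1}=\cdots=\alpha_k=0$, so the middle block of $\gamma$ coincides with a window of $\rho$:
\[
    (\gamma_{j+1},\ldots,\gamma_k)=(\rho_{j+1},\ldots,\rho_k)=(n-\epsilon-j,\,n-\epsilon-j-1,\,\ldots,\,n-\epsilon-k+1).
\]
I would then split into cases according to $\beta_1$, using only the behaviour of $\gamma_{k+1}=n-\epsilon-k+\beta_1$. If $\beta=0$, then $\gamma$ is already strictly dominant (the only nontrivial comparison is $\gamma_k=n-\epsilon-k+1>n-\epsilon-k=\gamma_{k+1}$), so $\ell=0$. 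If $1\le\beta_1\le k-j$, then $\gamma_{k+1}=\rho_{k-\beta_1+1}$ with $k-\beta_1+1\in\{j+1,\ldots,k\}$, whence $\gamma_{k+1}=\gamma_{k-\beta_1+1}$ and $\gamma$ is singular. If $\beta_1\ge k-j+1$, then $\gamma_{k+1}>n-\epsilon-j=\gamma_{j+1}\ge\gamma_i$ for every $i\in\{j+1,\ldots,k\}$, and each of the $k-j$ positive roots $\varepsilon_i-\varepsilon_{k+1}$ with $i$ in this range then pairs negatively with $\gamma$, forcing $\ell\ge k-j$.

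The analysis is uniform across types $B$, $C$, $D$ via the constant $\epsilon$ of Remark~\ref{rm:eps}. The only point of caution is type $D$, where the last coordinate $\beta_{n-k}$ may be negative; however the dominance condition $\beta_1\ge|\beta_{n-k}|$ ensures $\beta\ne 0\Rightarrow\beta_1\ge 1$, so the case distinction on $\beta_1$ alone is exhaustive. I do not anticipate a substantial obstacle: the proof reduces to spotting the singular hyperplane $\gamma_{k+1}=\rho_{k-\beta_1+1}$ in the small-$\beta_1$ regime and to a one-line inversion count in the large-$\beta_1$ regime.
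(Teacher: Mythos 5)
Your proposal is correct and follows essentially the same route as the paper: a direct Borel--Bott--Weil analysis of $\gamma=\rho+(\alpha,\beta)$, split on $\beta_1$, with singularity of $\gamma$ in the regime $1\le\beta_1\le k-j$ and a count of at least $k-j$ inversions coming from the pairs $(i,k+1)$, $j+1\le i\le k$, when $\beta_1\ge k-j+1$. The only cosmetic difference is that the paper delegates the case $\beta_1\le k-j$ to Lemma~\ref{lm:iab00}, whose proof is precisely the computation you carry out inline.
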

\begin{proof}
    If $\beta_1\leq k-j$, then the assumptions of Lemma~\ref{lm:iab00}
    are satisfied, and there is no higher cohomology at all.
    Assume $\beta_1\geq k-j+1$.
    Consider the~sequence $\gamma=\rho+(\alpha,\beta)$. 
    If two elements in $\gamma$ are equal, then the bundle is acyclic.
    Otherwise, we have one nontrivial cohomology group in the degree
    equal to the number of inversions in $\gamma$.
    Since $\alpha_p=0$
    for $j+1\leq p \leq k$, we conclude that $\gamma_p=n+1-p-\epsilon$ for $j+1\leq p \leq k$.
    Since $\gamma_{k+1}=n-k-\epsilon+\beta_1 \geq n-j+1-\epsilon$, the pair $(p, k+1)$ is an
    inversion for all $j+1\leq p\leq k$, so the number of such inversions
    is at least $k-j$.
\end{proof}

\subsection{Restricted representations}
In the following we will need to decompose Schur functors applied
to symplectic and orthogonal bundles into direct sums
of bundles associated with representations of the corresponding
structure groups. The following is definitely known to specialists,
but we were not able to find a convenient reference.

\begin{lemma}\label{lm:res}
    Let $\CE$ be either a symplectic or orthogonal vector bundle
    of dimension $N\geq 2$ on a scheme $X$
    with the corresponding structure group $\sfG$ of type $C_n$, $B_n$, or $D_n$.
    Let $\lambda\in \YD_N$.
    \begin{enumerate}[label=(\arabic*)]
        \item\label{lm:res-i1} For any irreducible summand $\CE^{\langle \beta \rangle}$, $\beta\in \rmP^+_{\sfG}$,
        in the decomposition
        \begin{equation}\label{eq:res}
            \Sigma^\lambda\CE \simeq \bigoplus \CE^{\langle \beta \rangle},
        \end{equation}
        one has $\beta\in \YDt_n$ and
        \[
            \beta_1\leq \lambda_1, \quad \beta_2\leq \lambda_2, \quad \ldots,
            \quad \beta_{n-1}\leq \lambda_{n-1}, \quad |\beta_n|\leq \lambda_n.
        \]
        \item\label{lm:res-i2} If $\dht(\lambda)\leq n$, then $\lambda\in\YD_n\subset\YDt_n$
        and $\CE^{\langle \lambda \rangle}$ appears in decomposition~\eqref{eq:res}
        with multiplicity $1$.
    \end{enumerate}
\end{lemma}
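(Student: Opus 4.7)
The plan is to reduce both claims to statements about $\Sigma^\lambda V$ as a $\sfG$-module and then invoke the classical Littlewood branching rules from $\GL(V)$ to~$\sfG$. Since $\CE$ is associated, as a $\sfG$-equivariant bundle, to the standard representation~$V$, and since Schur functors commute with the formation of associated bundles, the bundle $\Sigma^\lambda \CE$ is the associated bundle of the $\sfG$-module $\Sigma^\lambda V$. Thus any decomposition $\Sigma^\lambda V = \bigoplus V^{\langle \beta \rangle}$ as $\sfG$-modules yields verbatim the decomposition $\Sigma^\lambda \CE = \bigoplus \CE^{\langle \beta \rangle}$, with identical multiplicities.

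For part~\ref{lm:res-i1}, the main tool is Littlewood's restriction rule: in type $C_n$ (with $N = 2n$),
\[
    [\Sigma^\lambda V : V^{\langle \beta \rangle}]_{\sfG} = \sum_\delta c^\lambda_{\beta,\, \delta},
\]
where $\delta$ ranges over partitions with all columns of even length and $c^\lambda_{\beta,\delta}$ are Littlewood--Richardson coefficients; analogous formulas with ``even rows'' hold in types $B_n$ and $D_n$. The basic property $c^\lambda_{\beta,\delta} \neq 0 \Rightarrow \beta \subseteq \lambda$ as Young diagrams yields the bounds $\beta_i \leq \lambda_i$ for $i \leq n-1$ and, in conjunction with the type-$D$ refinement for the sign of $\beta_n$, $|\beta_n| \leq \lambda_n$. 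The requirement that $\beta$ parameterize a genuine irreducible $\sfG$-representation forces $\beta \in \YDt_n$.

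Part~\ref{lm:res-i2} then follows directly: if $\dht(\lambda) \leq n$, then $\lambda \in \YDt_n$, and the multiplicity of $V^{\langle \lambda \rangle}$ equals $\sum_\delta c^\lambda_{\lambda,\delta}$. Since $c^\lambda_{\beta,\delta} \neq 0$ requires $|\beta| + |\delta| = |\lambda|$, setting $\beta = \lambda$ forces $\delta = \varnothing$, and $c^\lambda_{\lambda,\varnothing} = 1$. Hence the multiplicity of $V^{\langle \lambda \rangle}$ is exactly one.

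The main obstacle is the precise formulation in type $D_n$, where irreducible $\SO_{2n}$-representations are indexed by $\beta \in \YDt_n$ with $\beta_n$ possibly negative (capturing the outer automorphism that swaps the two half-spin representations). The naive ``even rows'' rule must be refined here to account for both sign choices of $\beta_n$, thereby producing the condition $|\beta_n| \leq \lambda_n$ rather than $\beta_n \leq \lambda_n$. Should a direct reference for the refined type-$D$ rule prove hard to pin down, one can alternatively prove the bound by expanding the character $s_\lambda(x_1, \ldots, x_n, x_n^{-1}, \ldots, x_1^{-1})$ (or its analogue in types $B$, $D$) into irreducible $\sfG$-characters via Weyl's denominator formula and extracting the containment $\beta \subseteq \lambda$ from the resulting combinatorial identity.
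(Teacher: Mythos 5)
Your reduction from bundles to $\sfG$-modules is fine, and your route is genuinely different from the paper's: the paper proves the lemma geometrically, pulling $\Sigma^\lambda\CE$ back to the relative Lagrangian (resp.\ orthogonal) Grassmannian $p\colon \LGr_X(n,\CE)\to X$, filtering $\Sigma^\lambda(p^*\CE)$ via $0\to\CU\to p^*\CE\to\CU^*\to 0$ into pieces $\Sigma^\mu\CU\otimes\Sigma^{\lambda/\mu}\CU^*$, and reading off both the containment $\beta\subseteq\lambda$ and the multiplicity-one statement from the relative Borel--Weil theorem applied to the zeroth direct image. Your approach instead quotes Littlewood's restriction rule $[\Sigma^\lambda V: V^{\langle\beta\rangle}]=\sum_\delta c^\lambda_{\beta,\delta}$. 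That would be a shorter argument, but it has a genuine gap: Littlewood's rule in this form is valid only in the \emph{stable range} $\dht(\lambda)\leq n$, whereas the lemma (and its use later in the paper, where the diagrams applied to $\CUpU$ can be arbitrarily tall) allows any $\lambda\in\YD_N$ with $N=2n$ or $2n+1$. Outside the stable range the naive sum of Littlewood--Richardson coefficients is wrong, and one must invoke the modification rules of King or Koike--Terada, in which weights get altered by removing border strips and contributions can cancel with signs; your part~\ref{lm:res-i1}, which is exactly the non-stable statement, therefore does not follow from what you cite, and you would additionally have to check that the containments $\beta_i\leq\lambda_i$, $|\beta_n|\leq\lambda_n$ survive the modification procedure. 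Part~\ref{lm:res-i2} is precisely the stable-range case, so your argument ($\delta=\varnothing$ forces multiplicity $c^\lambda_{\lambda,\varnothing}=1$) is correct there, modulo the second issue below.

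The second, smaller issue is the one you flag yourself but do not resolve: in types $B$ and $D$ Littlewood's rule computes branching from $\GL_N$ to the full orthogonal group $\mathsf{O}_N$, not to $\SO_N$; one must then track how $\mathsf{O}_N$-irreducibles decompose under $\SO_N$ (this is where the two signs of $\beta_n$ in type $D_n$, i.e.\ the half-spin asymmetry, enter, and where the bound $|\beta_n|\leq\lambda_n$ rather than $\beta_n\leq\lambda_n$ comes from). Your proposed fallback via expanding $s_\lambda$ evaluated on $(x_1,\ldots,x_n,x_n^{-1},\ldots,x_1^{-1})$ against the Weyl denominator could in principle handle both issues at once, but as written it is only a sketch; to make the proof complete you would either carry out that character computation or switch to an argument, like the paper's geometric one, that does not rely on the stable-range hypothesis.
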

\begin{proof}
    For simplicity, we only treat the case when $\sfG$ is of type $C_n$.
    Types $B_n$ and $D_n$ can be dealt with analogously.
    The bundle $\CE$ is symplectic of dimension $2n$.
    Consider the relative Lagrangian Grassmannian $p: \LGr_X(n, \CE)\to X$,
    and let $\CU\subset p^*\CE$ denote the universal subbundle of dimension $n$.
    One has a short exact sequence
    \begin{equation}\label{eq:res-univc}
        0\to \CU \to p^*\CE \to \CU^* \to 0.
    \end{equation}
    Note that $p_*p^*\Sigma^\lambda\CE \simeq \Sigma^\lambda\CE$, where the functors are taken in the derived sense.

    Short exact sequence~\eqref{eq:res-univc} induces a filtration on $\Sigma^\lambda \left(p^*\CE\right)\simeq p^*\Sigma^\lambda\CE$
    with the associated quotients isomorphic to
    \begin{equation}\label{eq:res-quotc}
        \bigoplus_{\mu\subseteq \lambda} \Sigma^\mu\CU\otimes \Sigma^{\lambda/\mu}\CU^*.
    \end{equation}
    Since everything is equivariant,
    a relative version of spectral sequence~\eqref{eq:ss} shows that
    $\Sigma^\lambda\CE$ is a subbundle of the bundle
    \[
        \bigoplus_{\mu\subseteq \lambda} R^0p_* \Sigma^\mu\CU\otimes \Sigma^{\lambda/\mu}\CU^*.
    \]

    Decompose $\Sigma^{\lambda/\mu}\CU^*$ as a direct sum (with possible
    multiplicities) $\Sigma^{\lambda/\mu}\CU^*\simeq \oplus \Sigma^\nu\CU^*$.
    Then $\nu\subseteq \lambda$. Finally,
    decompose $\Sigma^\mu\CU\otimes\Sigma^\nu\CU^*\simeq \oplus \Sigma^\alpha\CU^*$, where $\alpha\in\ZZ^n$,
    $\alpha_1\geq \alpha_2\geq\cdots\geq \alpha_n$, are weights of $\GL_n$.
    From the relative Borel--Weil theorem we know that
    $R^0\Sigma^\alpha\CU^*\simeq \CE^{\langle \alpha \rangle}$
    if $\alpha$ is $\sfG$-dominant, and $R^0\Sigma^\alpha\CU^*=0$ otherwise.
    Now, $\alpha\subseteq \lambda$ (which follows from the Littlewood--Richardson rule), and in the former case dominance simply means that $\alpha\in\YD_n=\YDt_n$. This proves~\ref{lm:res-i1}.

    Now,~\ref{lm:res-i2} follows from observing the $\Sigma^\lambda\CU^*$
    appears only once as a direct summand in~\eqref{eq:res-quotc},
    and $\CE^{\langle \lambda \rangle}$ does not appear in any
    $R^1p_*\Sigma^\alpha\CU^*$ (the first row of the spectral sequence).
\end{proof}

\section{Symplectic Grassmannians}\label{s:c}
\subsection{Setup}
Let $X$ be nonspecial isotropic of the form $\IGr(k, V)$, where
$V$ is a $2n$-dimensional vector space over $\kk$ equipped with a non-degenerate
symplectic form $\omega$. Recall that $3\leq k \leq n-1$.

The tangent bundle $\CT$ of $\IGr(k, V)$
is a (non-irreducible) equivariant vector bundle, which is an~extension of two irreducible
equivariant bundles
\begin{equation}\label{eq:tigr}
    0\to (\CUpU)\otimes \CU^* \to \CT \to S^2\CU^* \to 0,
\end{equation}
where $\CU$ is the tautological rank $k$ bundle, and $\CU^\perp=(V/\CU)^*$.
Applying Lemma~\ref{lm:lambda-f} to~\eqref{eq:tigr}, for any integer $j\geq 0$
we get a filtration on $\Lambda^j\CT_X$ with
the graded pieces of the form
\begin{equation}\label{eq:ilambda}
    \Lambda^p\big((\CUpU)\otimes \CU^*\big)\otimes \Lambda^q S^2\CU^*,
\end{equation}
where $q=0,\dots,j$, and $p=j-q$. We can further decompose
\begin{equation}\label{eq:ig-pq}
\begin{aligned}
    \Lambda^p\big((\CUpU)\otimes \CU^*\big)\otimes \Lambda^q S^2\CU^*
    & \simeq
    \bigoplus_{|\lambda|=p} \Sigma^\lambda(\CUpU)\otimes \Sigma^{\lambda^T}\CU^*\otimes\Lambda^q S^2\CU^* \\
    & \simeq
    \bigoplus_{\substack{|\lambda|=p,\\ \mu \in \RB_{2q}}} \Sigma^\lambda(\CUpU)\otimes \Sigma^{\lambda^T}\CU^*\otimes\Sigma^\mu\CU^*,
\end{aligned}
\end{equation}
where the first isomorphism in~\eqref{eq:ig-pq} follows from Lemma~\ref{lm:lambda-t},
while the second isomorphism in~\eqref{eq:ig-pq} follows from Lemma~\ref{lm:lambda-s2}.

\subsection{Vanishing}\label{ssec:igr-vanish}
We first show that up to degree $k-1$ Hochschild cohomology
of $\IGr(k, V)$ is Hochschild global. That is, we are going to prove the following.

\begin{proposition}\label{pr:ig-hh0}
    For any integer $0\leq l \leq k-1$ one has $\HH^l(\IGr(k, V))=H^0(\IGr(k, V),\Lambda^j\CT)$.
\end{proposition}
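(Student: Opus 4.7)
The plan is to show the equivalent statement that $H^i(X, \Lambda^j \CT) = 0$ for all pairs $(i, j)$ with $i \geq 1$ and $i + j \leq k - 1$. Fixing such a $j$ (so that $0 \leq j \leq k - 2$), the goal becomes verifying $H^i(X, \Lambda^j \CT) = 0$ in the range $0 < i < k - j$.

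My strategy is to use the filtration on $\Lambda^j \CT$ induced by~\eqref{eq:tigr} via Lemma~\ref{lm:lambda-f}, together with the further decomposition~\eqref{eq:ig-pq} of its graded pieces. A typical summand has the form
\[
    \Sigma^\lambda(\CUpU) \otimes \Sigma^{\lambda^T}\CU^* \otimes \Sigma^\mu \CU^*,
\]
with $|\lambda| = p$, $\mu \in \RB_{2q}$, and $p + q = j$. Using Lemma~\ref{lm:res}, I decompose $\Sigma^\lambda(\CUpU)$ as a sum of $(\CUpU)^{\langle \beta \rangle}$'s, and the Littlewood--Richardson rule decomposes $\Sigma^{\lambda^T}\CU^* \otimes \Sigma^\mu \CU^*$ into summands $\Sigma^\alpha \CU^*$. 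Thus every irreducible summand of every graded piece is of the form $(\CUpU)^{\langle \beta \rangle} \otimes \Sigma^\alpha \CU^*$, exactly the shape handled by Lemma~\ref{lm:iab0}.

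The key height bound now comes from Lemma~\ref{lm:lr-h}, combined with the facts $\dht(\lambda^T) = \dwd(\lambda) \leq |\lambda| = p$ and $\dht(\mu) \leq q$ for $\mu \in \RB_{2q}$ (as noted immediately after the definition of $\RB_{2q}$): any summand $\Sigma^\alpha \CU^*$ of $\Sigma^{\lambda^T}\CU^* \otimes \Sigma^\mu \CU^*$ satisfies
\[
    \dht(\alpha) \;\leq\; \dht(\lambda^T) + \dht(\mu) \;\leq\; p + q \;=\; j.
\]
Since $j \leq k - 2 < k$, Lemma~\ref{lm:iab0} applies and gives
\[
    H^i\bigl(X, (\CUpU)^{\langle \beta \rangle} \otimes \Sigma^\alpha \CU^*\bigr) = 0 \quad \text{for all } 0 < i < k - j.
\]
This vanishing holds for every irreducible summand of every graded piece, hence for every graded piece of the filtration. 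Feeding this into the spectral sequence~\eqref{eq:ss} yields $E_1^{q, i} = 0$ throughout the range $0 < i < k - j$, so $E_\infty$ vanishes there as well and therefore $H^i(X, \Lambda^j \CT) = 0$ for all such $i$, which is exactly what is needed.

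The main point—and the only non-bookkeeping step—is the combinatorial bound $\dht(\alpha) \leq j$. Once that is in place, the proof is a direct assembly of the filtration, the Littlewood--Richardson decomposition, Lemma~\ref{lm:res}, the vanishing Lemma~\ref{lm:iab0}, and the filtration spectral sequence. The fact that $\mu \in \RB_{2q}$ forces $\dht(\mu) \leq q$ (rather than just $\leq 2q$) is what makes the budget $p + q = j$ precisely tight for the range required.
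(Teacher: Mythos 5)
Your proposal is correct and follows essentially the same route as the paper: the same filtration of $\Lambda^j\CT$ coming from~\eqref{eq:tigr}, the same decomposition~\eqref{eq:ig-pq} of the graded pieces, the same height bound $\dht(\alpha)\leq \dht(\lambda^T)+\dht(\mu)\leq p+q=j$ via Lemma~\ref{lm:lr-h}, and the same appeal to Lemma~\ref{lm:iab0} together with the spectral sequence~\eqref{eq:ss} (this is exactly the paper's Lemma~\ref{lm:ig-ijk}). The only cosmetic difference is that you cite Lemma~\ref{lm:res} to produce the summands $(\CUpU)^{\langle\beta\rangle}$, which the paper only needs later; since Lemma~\ref{lm:iab0} imposes no condition on $\beta$, this does not affect the argument.
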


Remember that, according to our definition,
\[
    \HH^l(\IGr(k, V)) = \bigoplus_{j=0}^{l} H^{l-j}(\IGr(k, V), \Lambda^j\CT).
\]
Thus, in order to prove Proposition~\ref{pr:ig-hh0},
for any integer $0 < l \leq k-1$, and any $0\leq j < l$, we need to show
that $H^{l-j}(\IGr(k, V), \Lambda^j\CT)=0$. We show the latter
by studying cohomology
spectral sequence~\eqref{eq:ss} associated with filtration~\eqref{eq:ilambda}.

\begin{lemma}\label{lm:ig-ijk}
    Let $i > 0$ and $j\geq 0$ be such that $i+j<k$.
    Then all the terms in the $i$-th row $E^{\bullet, i}_1$ of cohomology
    spectral sequence~\eqref{eq:ss} associated with filtration~\eqref{eq:ilambda}
    vanish. In particular, one has $H^i(\IGr(k, V),\Lambda^j\CT) = 0$.
\end{lemma}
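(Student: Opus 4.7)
The plan is to show that each summand appearing in the decomposition~\eqref{eq:ig-pq} of the graded pieces of filtration~\eqref{eq:ilambda} is acyclic in degree $i$, and then deduce both assertions (the second via the spectral sequence~\eqref{eq:ss}). Fix integers $p,q\geq 0$ with $p+q=j$, a partition $\lambda$ with $|\lambda|=p$, and a right balanced diagram $\mu\in\RB_{2q}$; this pins down a summand
\[
\Sigma^\lambda(\CUpU)\otimes\Sigma^{\lambda^T}\CU^*\otimes\Sigma^\mu\CU^*.
\]

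First I would apply Lemma~\ref{lm:res} to write $\Sigma^\lambda(\CUpU)$ as a direct sum of bundles $(\CUpU)^{\langle\beta\rangle}$ with $\beta\in\YDt_{n-k}$, and apply the Littlewood--Richardson rule to $\Sigma^{\lambda^T}\CU^*\otimes\Sigma^\mu\CU^*$ to obtain a direct sum of $\Sigma^\gamma\CU^*$. By Lemma~\ref{lm:lr-h}, each such $\gamma$ satisfies
\[
\dht(\gamma)\leq \dht(\lambda^T)+\dht(\mu)=\dwd(\lambda)+\dht(\mu).
\]
Combining the trivial bound $\dwd(\lambda)\leq|\lambda|=p$ with the observation recorded after~\eqref{eq:balanced} that $\dht(\mu)\leq q$ for any $\mu\in\RB_{2q}$ yields $\dht(\gamma)\leq p+q=j$.

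Since $j<k$, the pair $(\gamma,\beta)$ meets the hypotheses of Lemma~\ref{lm:iab0} with height parameter $j$, and that lemma asserts
\[
H^i\bigl(X,(\CUpU)^{\langle\beta\rangle}\otimes\Sigma^\gamma\CU^*\bigr)=0\quad\text{for all }0<i<k-j.
\]
The assumption $i+j<k$ is precisely $i<k-j$, so each direct summand of the $i$-th row of $E^{\bullet,i}_1$ vanishes; summing over the finitely many choices of $p$, $q$, $\lambda$, $\mu$, $\beta$, $\gamma$ gives vanishing of the entire row, and the conclusion about $H^i(\IGr(k,V),\Lambda^j\CT)$ follows from convergence of the spectral sequence.

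There is no serious obstacle once the machinery of Section~\ref{s:com} is in place; the proof is essentially a bookkeeping assembly of the Littlewood--Richardson height bound with the Borel--Bott--Weil-type vanishing of Lemma~\ref{lm:iab0}. The only genuinely tight ingredient is the bound $\dht(\mu)\leq q$ for $\mu\in\RB_{2q}$: the naive estimate $\dht(\mu)\leq 2q$ would give only $\dht(\gamma)\leq p+2q$ and cover roughly half of the stated range. It is precisely the fact that $\Lambda^q S^2\CU^*$ decomposes into right balanced diagrams (Lemma~\ref{lm:lambda-s2}), rather than into arbitrary diagrams of size $2q$, that gives enough room for the argument to reach codegree $k-1-j$.
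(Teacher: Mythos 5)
Your argument is correct and follows essentially the same route as the paper: decompose the graded pieces via~\eqref{eq:ig-pq}, bound $\dht$ of the $\GL_k$-weights by $\dht(\lambda^T)+\dht(\mu)\leq p+q=j$ using Lemma~\ref{lm:lr-h} together with $\dht(\mu)\leq q$ for $\mu\in\RB_{2q}$, and then invoke the Borel--Bott--Weil vanishing of Lemma~\ref{lm:iab0} in the range $0<i<k-j$. Your closing remark correctly identifies the key quantitative input (right balanced diagrams have height at most $q$), which is exactly what the paper's proof relies on as well.
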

\begin{proof}
    Let $0\leq q \leq j$, and let $p=j-q$. Decomposition~\eqref{eq:ig-pq}
    implies that

    \[
        E^{q,i}_1 \simeq
        \bigoplus_{\substack{|\lambda|=p,\\ \mu \in \RB_{2q}}} H^i(\IGr(k, V),\Sigma^\lambda(\CUpU)\otimes \Sigma^{\lambda^T}\CU^*\otimes\Sigma^\mu\CU^*).
    \]
    Thus, it is enough to show that for any $\lambda$ with $|\lambda|=p$, and for any $\mu\in\mathrm{RB}_q$ one has
    \begin{equation*}
        H^i(\IGr(k, V),\Sigma^\lambda(\CUpU)\otimes \Sigma^{\lambda^T}\CU^*\otimes\Sigma^\mu\CU^*) = 0
    \end{equation*}
    if $0 < i < k-j$. Assume the latter.
    The bundle $\Sigma^\lambda(\CUpU)\otimes \Sigma^{\lambda^T}\CU^*\otimes\Sigma^\mu\CU^*$
    is a direct sum of the bundles $(\CUpU)^{\langle \beta \rangle}\otimes \Sigma^\alpha\CU^*$,
    where $(\CUpU)^{\langle \beta \rangle}$ and $\Sigma^\alpha\CU^*$ run
    over the irreducible summands of $\Sigma^\lambda(\CUpU)$ and
    $\Sigma^{\lambda^T}\CU^*\otimes\Sigma^\mu\CU^*$, respectively.
    From Lemma~\ref{lm:lr-h} we know that  $\dht (\alpha)\leq \dht (\lambda^T) + \dht (\mu)\leq p + q = j$.
    Thus, the conditions of Lemma~\ref{lm:iab0} are satisfied, and
    $H^i(\IGr(k, V),(\CUpU)^{\langle \beta \rangle}\otimes \Sigma^\alpha\CU^*)=0$,
    which completes the proof.
\end{proof}

\begin{proof}[Proof of Proposition~\ref{pr:ig-hh0}]
    Follows immediately from Lemma~\ref{lm:ig-ijk}.
\end{proof}

For small exterior powers of the tangent bundle we can show in a similar fashion
that there is no higher cohomology at all.
\begin{lemma}\label{lm:ig-jk}
    For any integer $j\geq 0$ such that $2j\leq k$,
    one has $H^i(\IGr(k, V),\Lambda^j\CT) = 0$ for all $i > 0$.
\end{lemma}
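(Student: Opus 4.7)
The plan is to run the same spectral sequence argument as in Lemma~\ref{lm:ig-ijk}, but to apply the stronger vanishing criterion of Lemma~\ref{lm:iab00} in place of Lemma~\ref{lm:iab0}. So I would fix $j$ with $2j\leq k$, consider filtration~\eqref{eq:ilambda} on $\Lambda^j\CT$, and reduce to showing that every graded piece is acyclic; that is, that for every $0\leq q\leq j$, every $\lambda\in\YD$ with $|\lambda|=p:=j-q$, and every $\mu\in\RB_{2q}$, the bundle
\[
    \Sigma^\lambda(\CUpU)\otimes \Sigma^{\lambda^T}\CU^*\otimes\Sigma^\mu\CU^*
\]
has no higher cohomology on $\IGr(k,V)$.

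The next step is to decompose this bundle into a direct sum of bundles of the form $(\CUpU)^{\langle \beta \rangle}\otimes \Sigma^\alpha\CU^*$, where $\beta$ ranges over summands in $\Sigma^\lambda(\CUpU)$ and $\alpha$ ranges over summands in $\Sigma^{\lambda^T}\CU^*\otimes \Sigma^\mu\CU^*$, and to check the hypothesis $\dht(\alpha)+\beta_1\leq k$ of Lemma~\ref{lm:iab00}. By Lemma~\ref{lm:lr-h} together with the bounds $\dht(\lambda^T)=\lambda_1\leq p$ and $\dht(\mu)\leq q$ (the latter being the standard bound for $\RB_{2q}$), one gets $\dht(\alpha)\leq p+q=j$. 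By Lemma~\ref{lm:res}\ref{lm:res-i1} applied to the symplectic bundle $\CUpU$, one gets $\beta_1\leq \lambda_1\leq p\leq j$. Hence $\dht(\alpha)+\beta_1\leq 2j\leq k$, which is exactly the hypothesis required.

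Lemma~\ref{lm:iab00} then kills every summand outright when $\beta\neq 0$ and leaves only $H^0$ when $\beta=0$, so the first page of spectral sequence~\eqref{eq:ss} satisfies $E^{q,i}_1=0$ for every $q$ and every $i>0$. The convergence of the spectral sequence then yields $H^i(\IGr(k,V),\Lambda^j\CT)=0$ for all $i>0$, as desired.

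There is no real obstacle here: the proof is a direct strengthening of Lemma~\ref{lm:ig-ijk}, and the hypothesis $2j\leq k$ is built precisely to guarantee the uniform bound $\dht(\alpha)+\beta_1\leq k$ needed to upgrade from the partial vanishing in Lemma~\ref{lm:iab0} to the total vanishing in Lemma~\ref{lm:iab00}. The only thing one must be careful about is that $\beta_1$ is bounded by $\lambda_1$ rather than $|\lambda|$, but here the weaker inequality $\beta_1\leq |\lambda|=p$ is already enough.
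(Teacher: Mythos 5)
Your proposal is correct and is essentially the paper's own argument: the same filtration and decomposition, the bound $\dht(\alpha)\leq p+q=j$ from Lemma~\ref{lm:lr-h}, the bound $\beta_1\leq\lambda_1\leq p$ from Lemma~\ref{lm:res}, and the vanishing criterion $\dht(\alpha)+\beta_1\leq k$ of Lemma~\ref{lm:iab00}. The only difference is presentational --- the paper runs the same inequalities as a proof by contradiction (and nominally cites Lemma~\ref{lm:iab0}, though the inequality it invokes is really that of Lemma~\ref{lm:iab00}), while you apply the criterion directly.
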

\begin{proof}
    We follow the proof of Lemma~\ref{lm:ig-ijk} and consider the same filtration.
    Let $(\CUpU)^{\langle \beta\rangle}\otimes \Sigma^\alpha\CU^*$ be an~irreducible
    summand in $\Sigma^\lambda(\CUpU)\otimes \Sigma^{\lambda^T}\CU^*\otimes\Sigma^\mu\CU^*$.
    By Lemma~\ref{lm:res}, $\beta\subset \lambda$, so $\dht(\beta)\leq \dht(\lambda) \leq p$,
    and $\dwd(\beta)\leq \dwd(\lambda) \leq p$.
    By Lemma~\ref{lm:lr-h}, $\dht (\alpha)\leq \dht (\lambda^T) + \dht (\mu)\leq p + q = j$.
    Assume that $H^i(\IGr(k, V), (\CUpU)^{\langle \beta\rangle}\otimes \Sigma^\alpha\CU^*)\neq 0$ for some $i>0$.
    From Lemma~\ref{lm:iab0}, we conclude that $\dht(\alpha)+\dwd(\beta) > k$.
    In particular, $j+p > k$, which implies $p > k-j$. Since $p \leq j$,
    one must have $2j > k$, which contradicts our assumptions.
\end{proof}

\subsection{Cohomology group \texorpdfstring{$\HH^k(\IGr(k, V))$}{k-th cohomology}}
We are now going to show that $\IGr(k, V)$ is not Hochschild global
by studying the~first cohomology group not covered by Proposition~\ref{pr:ig-hh0};
namely, we will look at
\[
    \HH^k(\IGr(k, V)) = \bigoplus_{i+j=k} H^i(\IGr(k, V), \Lambda^j\CT).
\]
From our point of view, it has the ``boring part'':
degree $k$ polyvector fields $H^0(\IGr(k, V), \Lambda^j\CT)$.
Meanwhile, we have seen in Lemma~\ref{lm:ig-jk} that if $2j\leq k$, then all higher cohomology
groups of $\Lambda^j\CT$ vanish. Recall that our assumption is that $3 \leq k \leq n-1$.
Recall that  $\omega_i$ denotes the $i$-th fundamental weight of $\SP(V)$
under Bourbaki labelling (see Section~\ref{ssec:weights}).

\begin{proposition}\label{pr:igr-main}
    Let $j = \left\lfloor \frac{k}{2} \right\rfloor+1$. Then
    \begin{equation}\label{eq:igr-main}
        H^{k-j}(\IGr(k, V), \Lambda^j\CT) \simeq V^{\langle (2j-k \mid k+1) \rangle} = 
        \begin{cases}
            V^{\langle \omega_{k+1} \rangle}, & \text{if } k \text{ is odd}, \\
            V^{\langle \omega_1+\omega_{k+1} \rangle}, & \text{if } k \text{ is even}.
        \end{cases}
    \end{equation}
\end{proposition}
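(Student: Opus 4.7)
The plan is to analyze the row $E_1^{\bullet,\,k-j}$ of spectral sequence~\eqref{eq:ss} associated with filtration~\eqref{eq:ilambda} on $\Lambda^j\CT$, and to show it is concentrated in a single column where it is isomorphic to $V^{\langle(2j-k\mid k+1)\rangle}$. By~\eqref{eq:ig-pq}, each summand of the $q$-th graded piece has the form $\Sigma^\lambda(\CUpU)\otimes\Sigma^{\lambda^T}\CU^*\otimes\Sigma^\mu\CU^*$ with $|\lambda|=p=j-q$ and $\mu\in\RB_{2q}$, which decomposes further into irreducible pieces $(\CUpU)^{\langle\beta\rangle}\otimes\Sigma^\alpha\CU^*$ whose cohomology is governed by Theorem~\ref{thm:bbw}.

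The crucial step is a sandwich bound on $\dht(\alpha)$. By Lemma~\ref{lm:iab0}, nonvanishing of $H^{k-j}$ of such a summand requires $\dht(\alpha)\geq j$. Conversely, Lemma~\ref{lm:lr-h} applied to $\Sigma^\alpha\CU^*\subset\Sigma^{\lambda^T}\CU^*\otimes\Sigma^\mu\CU^*$ yields $\dht(\alpha)\leq\dht(\lambda^T)+\dht(\mu)=\dwd(\lambda)+\dht(\mu)$, and combining $\dwd(\lambda)\leq|\lambda|=p$ with the bound $\dht(\mu)\leq q$ valid for $\mu\in\RB_{2q}$ (as recalled just after~\eqref{eq:balanced}), we obtain $\dht(\alpha)\leq p+q=j$. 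Equality in all three inequalities forces $\lambda=(p)$ to be a single row, $\mu=(q+1\mid q)$ to be the unique height-$q$ element of $\RB_{2q}$, and by Pieri's rule applied to $\Lambda^p\CU^*\otimes\Sigma^\mu\CU^*$ the unique partition of height $j$ appearing is $\alpha=(q+1,\,1^{j-1},\,0^{k-j})$. Matching total weights --- $|\alpha|+|\beta|=|\gamma|=2j$ and $|\alpha|=j+q$ --- gives $|\beta|=j-q=|\lambda|$, so no contraction in the branching of $\Sigma^\lambda\CUpU$ is possible, and $\beta=\lambda=(j-q)$.

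Next I would compute the Borel--Bott--Weil sequence
\[
\rho+(\alpha,\beta)=(n+q+1,\,n,\,n-1,\,\ldots,\,n-j+2,\,n-j,\,\ldots,\,n-k+1,\,n-k+j-q,\,n-k-1,\,\ldots,\,1),
\]
and check when the off-pattern entry $n-k+j-q$ at position $k+1$ coincides with another entry. A direct case analysis shows it lies in $\{n-i+2:\,2\leq i\leq j\}$ exactly when $q\leq 2j-k-2$, in $\{n-i+1:\,j+1\leq i\leq k\}$ exactly when $q\geq 2j-k$, and equals the ``missing'' value $n-j+1$ precisely when $q=2j-k-1$. Hence $\rho+(\alpha,\beta)$ is singular (so the bundle is acyclic) for every $q\neq 2j-k-1$. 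In the unique nonsingular case $q=2j-k-1$, reordering requires exactly $k-j$ adjacent swaps to push $n-j+1$ past the block $n-j,n-j-1,\ldots,n-k+1$, and Theorem~\ref{thm:bbw} produces $V^{\langle(2j-k,\,1^k)\rangle}=V^{\langle(2j-k\mid k+1)\rangle}$ in $H^{k-j}$ with multiplicity one.

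This gives $E_1^{q,\,k-j}=0$ for $q\neq 2j-k-1$ and $E_1^{2j-k-1,\,k-j}\simeq V^{\langle(2j-k\mid k+1)\rangle}$. The same type of analysis applied in neighbouring rows shows that $V^{\langle(2j-k\mid k+1)\rangle}$ cannot appear in any other $E_1^{q',\,i'}$: any Borel--Bott--Weil preimage of $\gamma$ corresponds to splitting $\rho+\gamma$ by skipping one entry of the top $(k+1)$-block, and the right-balanced hook requirement on $\mu$ (arm minus leg equal to one) singles out $(q',i')=(2j-k-1,\,k-j)$. Hence all differentials $d_r$ annihilate this copy of $V^{\langle(2j-k\mid k+1)\rangle}$ by Schur's lemma, it survives to $E_\infty$, and one gets the desired isomorphism. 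The identification with $V^{\langle\omega_{k+1}\rangle}$ for $k$ odd (resp.\ $V^{\langle\omega_1+\omega_{k+1}\rangle}$ for $k$ even) follows from $2j-k\in\{1,2\}$ and rewriting the hook weight in terms of fundamental weights. The main obstacle will be the collision analysis for $\rho+(\alpha,\beta)$, where the precise arithmetic of $j=\lfloor k/2\rfloor+1$ enters.
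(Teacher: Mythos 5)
Your row computation follows the paper's own argument almost verbatim and is essentially correct: the sandwich $\dht(\alpha)=j$ forcing $\lambda=(p)$, $\mu=(q+1\mid q)$, $\alpha=(q+1\mid j)$, and the Borel--Bott--Weil collision analysis giving vanishing for $p\neq k-j+1$ and $V^{\langle(2j-k\mid k+1)\rangle}$ in degree $k-j$ for $p=k-j+1$, is exactly Lemma 3.5 of the paper. (Your justification that $\beta=\lambda$ via ``weight matching'' is slightly circular for the purpose of computing the whole row --- it only rules out contributions \emph{of the target representation} --- but the conclusion is saved because $\Sigma^{(p)}$ of a symplectic bundle is already irreducible, which is the reason the paper gives; note also that this collision analysis only uses $2j>k$, not the exact value of $j$.)

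The genuine gap is in the survival step. You assert that $V^{\langle(2j-k\mid k+1)\rangle}$ ``cannot appear in any other $E_1^{q',i'}$'' because BBW preimages come from skipping one entry of the top block and ``the right-balanced hook requirement on $\mu$ singles out $(q',i')=(2j-k-1,k-j)$.'' This is not a proof, and as a structural principle it cannot be right: the same reasoning applied to any $\lfloor k/2\rfloor<j<k$ would give the analogous uniqueness, whereas the paper's Remark~\ref{rm:igr-top} exhibits, for $j>\lfloor k/2\rfloor+1$, a copy of $V^{\langle(2j-k\mid k+1)\rangle}$ inside $E_1^{2j-k-2,\,i+1}$, i.e.\ exactly in the target of an outgoing differential. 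So the hypothesis $j=\lfloor k/2\rfloor+1$ is needed precisely at this step (not, as your closing sentence suggests, in the collision analysis): for $k$ odd one has $2j-k-1=0$ and there is nothing to the left, while for $k$ even one must separately kill the single dangerous term $E_1^{0,\,i+1}=H^{i+1}\bigl(\Lambda^{j}((\CUpU)\otimes\CU^*)\bigr)$; the paper's Lemma~\ref{lm:igr-top} does this by checking that the only summand of full width, $\lambda=(j)$, is acyclic by BBW, and that all summands with $\dwd(\lambda)\leq j-1$ fall under Lemma~\ref{lm:iab00}. One must also rule out incoming differentials from the row $i-1$ (trivial for $i-1>0$ by the vanishing of that row, and for $i=1$ by a height count on global sections), which your blanket claim subsumes but does not establish. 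Ruling out the other BBW preimages requires the branching constraint $\beta\subseteq\lambda$ together with size/parity and height bounds from Lemmas~\ref{lm:res} and~\ref{lm:lr-h}; the hook condition on $\mu$ alone does not do it. Until this part is carried out, the claimed isomorphism is only a computation of one $E_1$-term, not of $H^{k-j}(\IGr(k,V),\Lambda^j\CT)$.
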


\begin{remark}
    When $k=3$, Proposition~\ref{pr:igr-main} implies that
    \[
        H^1(\IGr(3, V), \Lambda^2\CT) \simeq V^{\langle \omega_4 \rangle}.
    \]
    Thus, we recover~\cite[Proposition~D]{Belmans2023}.
\end{remark}

It follows from Proposition~\ref{pr:igr-main} that $\IGr(k, V)$ is not Hochschild global.
Considering what we have shown in Section~\ref{ssec:igr-vanish},
$\HH^k(\IGr(k, V))$ is the smallest (in terms of degree)
non-global Hochschild cohomology group.
From Lemma~\ref{lm:ig-jk}, we know that $\Lambda^{\left\lfloor \frac{k}{2} \right\rfloor+1}\CT$
is the smallest exterior power that non-trivially contributes to $\HH^k(\IGr(k, V))$.
On the other end of the spectrum, we have $H^{0}(\IGr(k, V), \Lambda^k\CT)$---polyvector
fields of degree $k$. It might be interesting to fully compute
$\HH^k(\IGr(k, V))$; that is, to study $H^{k-j}(\IGr(k, V), \Lambda^j\CT)$
for $\left\lfloor \frac{k}{2} \right\rfloor+1 < j < k$.

\begin{question}\label{q:igr-main}
    Let $\left\lfloor \frac{k}{2} \right\rfloor < j < k$. Is it true that
    \begin{equation}\label{eq:igr-main-q}
        H^{k-j}(\IGr(k, V), \Lambda^j\CT) \simeq V^{\langle (2j-k \mid k+1) \rangle} = V^{\langle (2j-k-1)\omega_1+\omega_{k+1} \rangle}?
    \end{equation}
\end{question}

The rest of the section is devoted to our proof of Proposition~\ref{pr:igr-main}.
In order to compute the desired cohomology groups, we will use
filtration~\eqref{eq:ilambda} and spectral sequence~\eqref{eq:ss}
associated with~it.
We begin by working in greater generality
and fix positive integers $i>0$ and $j>\left\lfloor \frac{k}{2}\right\rfloor$ such that $i+j=k$. The terms of our spectral sequence are
\begin{equation}\label{eq:igr-ss}
    E^{q,i}_1 = H^i\big(\IGr(k, V),\, \Lambda^p\big((\CUpU)\otimes \CU^*\big)\otimes \Lambda^q S^2\CU^*\big),
\end{equation}
where $q=0,\ldots, j$, and $p=j-q$.

The following shows that there is only one non-zero term in the $i$-th row of~\eqref{eq:igr-ss}.

\begin{lemma}
    For $i$, $j$, $p$, and $q$ as above, one has
    \begin{equation*}
        H^i\big(\IGr(k, V),\, \Lambda^p\big((\CUpU)\otimes \CU^*\big)\otimes \Lambda^q S^2\CU^*\big) = \begin{cases}
            V^{\langle (2j-k \mid k+1) \rangle}, & \text{if}\ p=k-j+1, \\
            0, & \text{otherwise.}
        \end{cases}
    \end{equation*}
\end{lemma}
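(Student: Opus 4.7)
The plan is to decompose the bundle into its $\sfG$-equivariant irreducible summands and apply the Borel--Bott--Weil theorem (Theorem~\ref{thm:bbw}) to each. Starting from~\eqref{eq:ig-pq}, Lemma~\ref{lm:res} splits $\Sigma^\lambda \CUpU$ into bundles $(\CUpU)^{\langle \beta \rangle}$ with $\beta\subseteq\lambda$, and the Littlewood--Richardson rule splits $\Sigma^{\lambda^T}\CU^* \otimes \Sigma^\mu \CU^*$ into bundles $\Sigma^\alpha \CU^*$. Each irreducible summand of the whole bundle is thus of the form $(\CUpU)^{\langle \beta \rangle} \otimes \Sigma^\alpha \CU^*$ for some choice of $(\lambda, \mu, \alpha, \beta)$ with $|\lambda|=p$, $\mu \in \RB_{2q}$, and $\alpha, \beta$ as above.

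The first reduction is Lemma~\ref{lm:iab0}: since $i = k - j$, any summand with $\dht(\alpha) < j$ has vanishing $H^i$. Combined with the bounds $\dht(\alpha) \leq \dht(\lambda^T) + \dht(\mu) \leq \dwd(\lambda) + \dht(\mu) \leq p + q = j$ (from Lemma~\ref{lm:lr-h}), equality $\dht(\alpha) = j$ forces $\dwd(\lambda) = p$ (so $\lambda = (p)$), $\dht(\mu) = q$ (so $\mu = (q+1\mid q)$ is the unique maximal-height element of $\RB_{2q}$), and that $\alpha$ is the unique summand of $\Sigma^{(1^p)} \CU^* \otimes \Sigma^\mu \CU^*$ of height $j$, namely $\alpha = (q+1, 1^{j-1})$. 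Since the symplectic bundle $\CUpU$ admits no $\sfG$-invariant contraction on symmetric tensors, $\Sigma^{(p)}\CUpU = S^p \CUpU = (\CUpU)^{\langle (p)\rangle}$ is irreducible, forcing $\beta = (p)$.

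The heart is a Borel--Bott--Weil calculation for the unique remaining summand $(\CUpU)^{\langle (p)\rangle} \otimes \Sigma^{(q+1, 1^{j-1})}\CU^*$, viewed as a function of $p \in [0, j]$. I would write out $\gamma = \rho + (\alpha, \beta)$ and observe that among its entries, only $\gamma_1 = n + j - p + 1$ and $\gamma_{k+1} = n - k + p$ depend on $p$. A short case analysis of when $\gamma_{k+1}$ coincides with any other entry shows $\gamma$ is singular precisely for $p \in [1, i] \cup [i+2, j]$, with Lemma~\ref{lm:iab00} covering the first range and a direct coincidence $\gamma_{k+1} = \gamma_{k-p+2}$ covering the second. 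The two nonsingular cases are $p = 0$, where $\gamma$ is already strictly decreasing ($0$ inversions, contributing to $H^0$ rather than $H^i$), and $p = i + 1 = k - j + 1$, where the misplaced entry $\gamma_{k+1} = n - j + 1$ must be moved left by $i$ adjacent transpositions past the strictly smaller entries $n - j, n - j - 1, \ldots, n - k + 1$; the resulting dominant sequence minus $\rho$ equals $(2j - k, 1^k, 0^{n-k-1})$, which is the hook $(2j - k \mid k + 1)$.

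The main obstacle is the case analysis on $p$ to verify singularity of $\gamma$ for all irrelevant $p$ and to count inversions exactly for $p = i + 1$. The remaining ingredients---uniqueness of $\alpha$, irreducibility of $S^p \CUpU$ for symplectic $\CUpU$, and the hook reading---are routine.
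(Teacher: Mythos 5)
Your proposal is correct and follows essentially the same route as the paper: reduce via the height bound $\dht(\alpha)\leq p+q=j$ and Lemma~\ref{lm:iab0} to the single summand $(\CUpU)^{\langle (p)\rangle}\otimes\Sigma^{(q+1\mid j)}\CU^*$, then run the Borel--Bott--Weil case analysis in $p$, finding acyclicity except at $p=0$ (sections only) and $p=k-j+1$ (exactly $i$ inversions, yielding $V^{\langle (2j-k\mid k+1)\rangle}$). The only cosmetic difference is that you invoke Lemma~\ref{lm:iab00} for $1\leq p\leq k-j$, where the paper instead exhibits the coincidence $\gamma_{k+1}=\gamma_{k+1-p}$ directly; both are valid.
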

\begin{proof}
    Consider decomposition~\eqref{eq:ig-pq}
    \[
        \Lambda^p\big((\CUpU)\otimes \CU^*\big)\otimes \Lambda^q S^2\CU^*
        \simeq
        \bigoplus_{\substack{|\lambda|=p,\\ \mu \in \RB_{2q}}} \Sigma^\lambda(\CUpU)\otimes \Sigma^{\lambda^T}\CU^*\otimes\Sigma^\mu\CU^*,
    \]
    and let $\CE^{\beta,\alpha}=(\CUpU)^{\langle \beta \rangle}\otimes \Sigma^\alpha\CU^*$
    be an irreducible equivariant summand in $\Sigma^\lambda(\CUpU)\otimes \Sigma^{\lambda^T}\CU^*\otimes\Sigma^\mu\CU^*$.
    Observe that $\dht(\lambda^T)\leq p$ and $\dht(\mu)\leq q$.
    Thus, $\dht(\alpha)\leq \dht(\lambda^T) + \dht(\mu)\leq p+q = j$.
    If $\dht(\alpha) \leq j-1$, then from Lemma~\ref{lm:iab0} we conclude
    that $H^i(\IGr(k, V), \CE^{\beta,\alpha}) = 0$ since $i = k-j < k - (j-1)$.
    Thus, if $H^i(\IGr(k, V), \CE^{\beta,\alpha})$ does not vanish,
    then $\dht(\alpha)=j$. The latter means that $\dht(\lambda^T) = p$ and
    $\dht(\mu)=q$, which uniquely determines $\lambda$ and $\mu$:
    since $|\lambda|=p$, we conclude that $\lambda=(p)$, and, since $\mu\in \RB_{2q}$, we conclude that $\mu = (q+1\mid q)$. It follows from the Littlewood--Richardson/Pieri's rule that there is only one irreducible summand $\Sigma^\alpha\CU^*\subset\Sigma^{(p)^T}\CU^*\otimes\Sigma^{(q+1\,|\,q)}\CU^*$
    such that $\dht(\alpha)=p+q=j$: one has $\alpha = (q+1\,|\,j)$, and
    this summand appears with multiplicity one. Meanwhile, $\Sigma^{(p)}(\CUpU)=(\CUpU)^{\langle (p) \rangle}$ is an irreducible equivariant
    bundle. Thus, one has
    \[
        H^i\big(\IGr(k, V),\, \Lambda^p\big((\CUpU)\otimes \CU^*\big)\otimes \Lambda^q S^2\CU^*\big)
        \simeq H^i\big(\IGr(k, V),\, (\CUpU)^{\langle (p) \rangle}\otimes \Sigma^{(q+1\,|\,j)}\CU^*\big).
    \]
    We compute the latter using our simplified version of the Borel--Bott--Weil theorem (Theorem~\ref{thm:bbw}). Consider $\gamma=\rho+(\alpha, \beta)$:
    \begin{equation}
        \gamma = (n+q+1, n, n-1, \ldots, n-j+2, \underbrace{n-j, n-j-1, \ldots, n-k+1}_{k-j \text{ terms}}, n-k+p, n-k-1, \ldots, 2, 1).
    \end{equation}
    For instance, $\gamma_t=n+2-t$ for $t=2, \ldots, j$ and
    $\gamma_t = n+1-t$ for $t=j+1, \ldots, k$.
    Recall that we work under the assumption that $2j>k$. In particular,
    $k-j+1\leq j$. Consider the following cases for $0\leq p\leq j$.
    If $p=0$, then $\gamma$ is strictly decreasing, and the corresponding
    bundle has no higher cohomology. If $0 < p < k-j+1$, then $\gamma_{k+1}=n-k+p$
    equals $\gamma_{k+1-p}$, and the bundle is acyclic.
    Next, if~$k-j+1<p\leq j$, then $\gamma_{k+1}=\gamma_{k+2-p}$, and the bundle is
    acyclic once again. Finally, if $p=k-j+1$ (thus, $q=j-p=2j-k-1$), then all the terms in~$\gamma$
    are distinct, and there are
    $k-j=i$ inversions in~$\gamma$ given by the pairs of indices $(t, k+1)$, where $t=j+1, \ldots, k$.
    Once we put the elements of $\gamma$ in decreasing order and subtract
    $\rho$, we get precisely the weight $(q+1\mid k+1)=(2j-k\mid k+1)$.
\end{proof}

So far we have shown that the only nontrivial term in $E^{\bullet, i}_1$ is
\[
    E^{2j-k-1, i}_1 = V^{\langle (2j-k \mid k+1) \rangle}.
\]
Our spectral sequence~\eqref{eq:igr-ss} is equivariant, its $r$-th differential
us of degree $(-r, 1)$
so if we want to show that this term survives in $E^{\bullet, i}_\infty$,
it is enough to show that $V^{\langle (2j-k \mid k+1) \rangle}$
does not appear in $E^{s, i-1}_1$ for $s> 2j-k-1$ and in $E^{t, i+1}_1$
for $t < 2j-k-1$.
The former is rather easy. We show that $V^{\langle (2j-k \mid k+1) \rangle}$
does not appear in any term of the row $E^{\bullet, i-1}_1$.

\begin{lemma}
    The representation $V^{\langle (2j-k \mid k+1) \rangle}$ does not appear
    in $E^{\bullet, i-1}_1$.
\end{lemma}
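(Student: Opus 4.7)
The plan is to expand each $E^{q, i-1}_1$ using decomposition~\eqref{eq:ig-pq}, reduce to a uniform height bound on the $\CU^*$-part of every irreducible summand, and then split into two subcases according to whether the target cohomological degree $k - j - 1$ is positive or zero. First, I would unfold $E^{q, i-1}_1$ (with $p + q = j$) as a direct sum of cohomologies of bundles $(\CUpU)^{\langle\beta\rangle}\otimes\Sigma^\alpha\CU^*$ coming from the decomposition of $\Sigma^\lambda(\CUpU)\otimes\Sigma^{\lambda^T}\CU^*\otimes\Sigma^\mu\CU^*$ for $|\lambda| = p$ and $\mu \in \RB_{2q}$. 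Applying Lemma~\ref{lm:lr-h} to the tensor product $\Sigma^{\lambda^T}\CU^*\otimes\Sigma^\mu\CU^*$ yields the uniform bound $\dht(\alpha) \leq \dht(\lambda^T) + \dht(\mu) \leq p + q = j$.

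With this bound in hand, I would split into two cases. When $k - j - 1 \geq 1$, which, for $j = \lfloor k/2 \rfloor + 1$, is equivalent to $k \geq 5$, Lemma~\ref{lm:iab0} applied with the parameter $j$ immediately forces $H^{k-j-1}(X, (\CUpU)^{\langle\beta\rangle}\otimes\Sigma^\alpha\CU^*) = 0$ for every summand, so the entire row $E^{\bullet, i-1}_1$ vanishes. When $k - j - 1 = 0$, which forces $k \in \{3, 4\}$ and $j = k - 1$, I would argue directly via Theorem~\ref{thm:bbw}: a copy of $V^{\langle(2j-k\mid k+1)\rangle}$ in $H^0((\CUpU)^{\langle\beta\rangle}\otimes\Sigma^\alpha\CU^*)$ would require $\rho + (\alpha, \beta)$ strictly dominant with $(\alpha, \beta) = (2j-k, 1^k, 0^{n-k-1})$, and in particular $\alpha_k = 1$, giving $\dht(\alpha) = k > k - 1 = j$, which contradicts the uniform bound.

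The main obstacle I expect is precisely the borderline case $k - j - 1 = 0$, where Lemma~\ref{lm:iab0} supplies no information; the resolution hinges on a simple geometric feature of the target weight, namely that $(2j-k\mid k+1) = (2j-k, 1^k)$ has exactly $k + 1$ rows, one more than the height of $\alpha$ permitted by Lemma~\ref{lm:lr-h}, so there is simply not enough room in the $\alpha$-part to produce such a weight through the identity element of the Weyl group. Both subcases therefore ultimately rest on the same height inequality $\dht(\alpha) \leq j$, which is the heart of the argument.
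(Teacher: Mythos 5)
Your proposal is correct and takes essentially the same approach as the paper: for $i-1>0$ the row vanishes via the height bound $\dht(\alpha)\le j$ fed into Lemma~\ref{lm:iab0} (this is exactly the paper's Lemma~\ref{lm:ig-ijk}), and for $i=1$ the paper likewise combines Borel--Bott--Weil with $\dht(\alpha)\le j=k-1$, only phrased by first forcing $\beta=0$ and then comparing heights rather than reading off $\alpha_k=1$ directly. The only slip is the parenthetical identification of the two cases with $k\ge 5$ and $k\in\{3,4\}$, which holds only under $j=\lfloor k/2\rfloor+1$ (an assumption of the next lemma, not of this one), but since neither case of your argument actually uses it, the proof stands as written.
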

\begin{proof}
    If $i-1>0$, then from Lemma~\ref{lm:ig-ijk}, we see that $E^{\bullet, i-1}_1=0$.
    Assume $i=1$, which implies $j=k-1$.
    Then $E^{q,0}_1 = H^0\big(\IGr(k, V),\, \Lambda^p\big((\CUpU)\otimes \CU^*\big)\otimes \Lambda^q S^2\CU^*\big)$, where
    $\Lambda^p\big((\CUpU)\otimes \CU^*\big)\otimes \Lambda^q S^2\CU^*$
    is a~direct sum of vector bundles of the form
    $\CE^{\alpha, \beta}=(\CUpU)^{\langle \beta \rangle}\otimes \Sigma^\alpha\CU^*$
    with $\dht(\alpha)\leq p+q=j=k-1$.
    From the Borel--Bott--Weil theorem we know that the bundle $\CE^{\alpha, \beta}$
    contributes to global sections if and only if all the terms
    in~$\gamma=\rho+(\alpha, \beta)$ are strictly decreasing.
    Assume the latter.
    Since $\dht(\alpha)\leq k-1$, one has
    $n-k+1=\gamma_k > \gamma_{k+1} = n-k+\beta_1$.
    Since $\beta_1\geq 0$, we conclude that
    $\beta_1=0$. Thus, $\beta=0$. Finally, remark that $H^0(\IGr(k, V), \CE^{\alpha, 0}) = V^{\langle \alpha \rangle}$, but $V^{\langle \alpha \rangle}$ can not be isomorphic
    to $V^{\langle (2j-k \mid k+1) \rangle}$ since $\dht(\alpha) \leq k-1$ and $\dht\!\big((2j-k \mid k+1)\big) = k+1$.

\end{proof}

It is tempting to conjecture that the representation
$V^{\langle (2j-k \mid k+1) \rangle}$ does not appear in the $(i+1)$-st
row of the spectral sequence, $E^{\bullet, i+1}_1$.
In Lemma~\ref{lm:igr-top} we show that it is indeed true when
$j=\lfloor\frac{k}{2}\rfloor+1$. However, it is not true
for $\lfloor\frac{k}{2}\rfloor+1 < j < k$,
see Remark~\ref{rm:igr-top}.

\begin{lemma}\label{lm:igr-top}
    Assume $j=\lfloor\frac{k}{2}\rfloor+1$.
    The representation $V^{\langle (2j-k \mid k+1) \rangle}$ does not appear
    in $E^{q, i+1}_1$ for $q < 2j-k-1$. In particular,
    $E^{2j-k-1, i}_1 = E^{2j-k-1, i}_\infty$.
\end{lemma}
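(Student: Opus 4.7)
My plan is to first reduce the statement to a single cohomology computation. Since $j = \lfloor k/2 \rfloor + 1$, one has $2j - k - 1 = 1$ for $k$ even and $2j - k - 1 = 0$ for $k$ odd, so the range $q < 2j - k - 1$ is empty when $k$ is odd and consists of the single value $q = 0$ when $k$ is even. For $k$ even, this reduces to showing that no copy of $V^{\langle (2 \mid k+1) \rangle}$ appears in $H^{k/2}\bigl(\IGr(k, V), \Lambda^{k/2+1}((\CUpU) \otimes \CU^*)\bigr)$, since $p = j = k/2 + 1$ and $i + 1 = k/2$ in this range.

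My approach mirrors the arguments already used in this section: decompose the bundle via Lemmas~\ref{lm:lambda-t} and~\ref{lm:res} into equivariant summands $(\CUpU)^{\langle \beta\rangle}\otimes \Sigma^{\lambda^T}\CU^*$ indexed by partitions $\lambda$ of size $k/2 + 1$ and by $\beta \subseteq \lambda$, then apply Borel--Bott--Weil (Theorem~\ref{thm:bbw}) to each piece. For such a summand to contribute $V^{\langle (2 \mid k+1)\rangle}$, the multiset of entries of $\rho + (\lambda^T, \beta)$ must equal that of the target $\rho + (2 \mid k+1) = (n+2, n, n-1, \ldots, n-k+1, n-k-1, \ldots, 1)$, which is $\{1,\ldots,n+2\}$ with $n+1$ and $n-k$ removed. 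Summing coordinates on both sides gives $|\lambda^T| + |\beta| = k+2$, hence $|\beta| = |\lambda|$, and together with $\beta \subseteq \lambda$ this forces $\beta = \lambda$. Matching the first two entries of the shifted sequence against the target then pins down $\dht(\lambda) = 2$ (since position~$1$ equals $n + \dht(\lambda)$ and the value $n+1$ is absent from the target) and $\lambda_2 = 1$ (by the analogous argument at position~$2$, using $k \geq 4$ to exclude $\lambda^T_2 = 0$). The unique candidate is therefore $\lambda = (k/2, 1)$.

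The final step is to check that even this candidate fails. A direct computation shows that position $k/2 + 1$ of $\rho + (\lambda^T, \lambda)$ contributes $n - k/2$ (since $\lambda^T_{k/2+1} = 0$), while position $k+1$ contributes $n + 1 - (k+1) + \lambda_1 = n - k/2$ as well, so the sequence is singular and the summand is acyclic. (In the edge case $n = k+1$ the summand is not even present, as $\beta = (k/2, 1)$ violates the height bound $\dht(\beta) \leq n - k = 1$ from Lemma~\ref{lm:res}.) The identification $E^{2j-k-1, i}_1 = E^{2j-k-1, i}_\infty$ is then formal from equivariance: every differential preserves isotypic components, and the $V^{\langle (2j-k \mid k+1)\rangle}$-piece at $(2j-k-1, i)$ can only interact with row $i-1$ (ruled out by the preceding lemma) or with columns $q < 2j-k-1$ of row $i+1$ (ruled out by what we just proved). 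The main obstacle I expect is the combinatorial pin-down of $\lambda$; the tight size equality $|\beta| = |\lambda|$ combined with the two-coordinate matching is precisely what makes the analysis work, and one should expect the corresponding statement to fail for larger $j$, as foreshadowed by Remark~\ref{rm:igr-top}.
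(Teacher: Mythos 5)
Your proof is correct, and its skeleton is the same as the paper's: reduce by parity of $k$ (the range $q<2j-k-1$ is empty for $k$ odd), and for $k$ even analyze the single term $E^{0,i+1}_1=H^{k/2}\bigl(\IGr(k,V),\Lambda^{k/2+1}((\CUpU)\otimes\CU^*)\bigr)$ via the decomposition into summands $(\CUpU)^{\langle\beta\rangle}\otimes\Sigma^{\lambda^T}\CU^*$ and Borel--Bott--Weil. The finishing step differs, though. The paper proves the stronger statement that the whole group vanishes: it splits on $\dwd(\lambda)$, handling $\dwd(\lambda)=k/2+1$ (i.e.\ $\lambda$ a single row) by a direct Borel--Bott--Weil check and $\dwd(\lambda)\leq k/2$ by the inequality $\beta_1+\dht(\lambda^T)\leq 2\dwd(\lambda)\leq k$ together with Lemma~\ref{lm:iab00}. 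You instead track only the isotypic component $V^{\langle(2\mid k+1)\rangle}$: the coordinate-sum comparison forces $|\beta|=|\lambda|$, hence $\beta=\lambda$ by Lemma~\ref{lm:res}, the first two entries of $\rho+(\lambda^T,\lambda)$ pin down $\lambda=(k/2,1)$, and that unique candidate dies because $\rho+(\lambda^T,\lambda)$ has the repeated entry $n-k/2$ (your treatment of the edge case $n=k+1$, where $(\CUpU)^{\langle(k/2,1)\rangle}$ does not occur for rank reasons, is also fine). Your version yields slightly less (it does not show $E^{0,i+1}_1$ is free of other representations, which the paper's argument gives for free), but it is exactly the multiplicity-tracking style the paper itself uses later for the orthogonal case in Proposition~\ref{prop:ogr-k-2}, and it makes transparent why the statement is tight at $j=\lfloor k/2\rfloor+1$, consistent with Remark~\ref{rm:igr-top}. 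The concluding deduction $E^{2j-k-1,i}_1=E^{2j-k-1,i}_\infty$ from equivariance and the degree $(-r,1)$ of the differentials is the same as in the paper.
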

\begin{proof}
    The second statement follows from the spectral sequence being equivariant.
    Let us show the first one. Consider two cases, based on the parity of $k$.
    Assume that $k=2t-1$ for some $t\geq 2$.
    Then $j=t$, and $2j-k-1=0$. Since all the terms $E^{q,\bullet}_\bullet$
    are zero for $q<0$, the statement follows.
    Now, consider the case $k=2t$ for some $t\geq 2$. Then $j=t+1$,
    and $2j-k-1=1$. Thus, we only need to show that $V^{\langle (2 \mid k+1) \rangle}$ does not appear in
    \[
        E^{0,i+1}_1 = H^t\big(\IGr(k, V), \Lambda^{t+1}((\CUpU)\otimes \CU^*)\big) \simeq
        \bigoplus_{|\lambda|=t+1}
        H^t\big(\IGr(k, V), \Sigma^\lambda(\CUpU)\otimes \Sigma^{\lambda^T}\CU^*\big).
    \]
    Let $\CE^{\beta}=(\CUpU)^{\langle \beta \rangle}\otimes \Sigma^{\lambda^T}\CU^*$ be an irreducible summand.
    If $\dwd(\lambda)=t+1$, then $\lambda=(t+1)$ and
    $\Sigma^\lambda(\CUpU)\otimes \Sigma^{\lambda^T}\CU^* = \Sigma^{\langle(t+1)\rangle}(\CUpU)\otimes \Sigma^{(1|t+1)}\CU^*$. The cohomology of the
    latter bundle vanishes by the Borel--Bott--Weil theorem.
    If $\dwd(\lambda)\leq t$, then
    $\beta_1+\dht(\lambda^T)\leq 2\dwd(\lambda) \leq k$;
    so, Lemma~\ref{lm:iab00} implies the vanishing.
\end{proof}

\begin{remark}\label{rm:igr-top}
    Assume that $\lfloor\frac{k}{2}\rfloor+1 < j < k$.
    As before, let $i=k-j$, let $q=2j-k-2$, and let $p=j-q$. We claim that there is an irreducible summand
    in $E^{2j-k-2, i+1}_1$ isomorphic to $E^{2j-k-1, i}_1 = V^{\langle (2j-k \mid k+1) \rangle}$.
    In particular, the differential $d^{2j-k-1, i}_1$ might kill
    $E^{2j-k-1, i}_1$.
    Consider the $q$-th term of the filtration~\eqref{eq:ilambda}.
    It~contains a summand isomorphic to $S^{p}(\CUpU)\otimes \Lambda^p \CU^*\otimes \Sigma^{(q+1\,|\,q)}\CU^*$.
    Recall that $S^{p}(\CUpU)=(\CUpU)^{\langle (p) \rangle}$.
    Meanwhile,
    $\Lambda^p \CU^*\otimes \Sigma^{(q+1\,|\,q)}\CU^*$
    contains an irreducible summand isomorphic to
    $\Sigma^{(q+2\,|\,q+p-1)}\CU^*$.
    A simple Borel--Bott--Weil computation shows that
    \[
        H^{i+1}\big(\IGr(k, V),\, (\CUpU)^{\langle (p) \rangle}\otimes \Sigma^{(q+2\,|\,j-1)}\CU^*\big) \simeq V^{\langle (2j-k \mid k+1) \rangle}.
    \]
\end{remark}

\section{Orthogonal Grassmannians}\label{s:bd}
\subsection{Setup}
Let $X$ be nonspecial isotropic of the form $\OGr(k, V)$, where
$V$ is a vector space over $\kk$ equipped with a non-degenerate
symmetric form $\omega$. Put $n=\lfloor \frac{\dim V}{2}\rfloor$.
Recall $3\leq k\leq n-2$. 

The tangent bundle $\CT$ of $\OGr(k, V)$
is a (non-irreducible) equivariant vector bundle, which is an~extension of two irreducible
equivariant bundles
\begin{equation}\label{eq:togr}
    0\to (\CUpU)\otimes \CU^* \to \CT \to \Lambda^2\CU^* \to 0,
\end{equation}
where $\CU$ is the tautological rank $k$ bundle, and $\CU^\perp\simeq(V/\CU)^*$.
The difference between the orthogonal and the symplectic case is that
the term $S^2\CU^*$ gets replaced by $\Lambda^2\CU^*$.
Applying Lemma~\ref{lm:lambda-f} to~\eqref{eq:tigr}, for any integer $j\geq 0$
we get a filtration on $\Lambda^j\CT_X$ with
the graded pieces of the form
\begin{equation}\label{eq:olambda}
    \Lambda^p\big((\CUpU)\otimes \CU^*\big)\otimes \Lambda^q \Lambda^2\CU^*,
\end{equation}
where $q=0,\dots,j$, and $p=j-q$. We can further decompose
\begin{equation}\label{eq:og-pq}
\begin{aligned}
    \Lambda^p\big((\CUpU)\otimes \CU^*\big)\otimes \Lambda^q \Lambda^2\CU^*
    & \simeq
    \bigoplus_{|\lambda|=p} \Sigma^\lambda(\CUpU)\otimes \Sigma^{\lambda^T}\CU^*\otimes\Lambda^q \Lambda^2\CU^* \\
    & \simeq
    \bigoplus_{\substack{|\lambda|=p,\\ \mu \in \DB_{2q}}} \Sigma^\lambda(\CUpU)\otimes \Sigma^{\lambda^T}\CU^*\otimes\Sigma^\mu\CU^*,
\end{aligned}
\end{equation}
where the first isomorphism in~\eqref{eq:og-pq} follows from Lemma~\ref{lm:lambda-t},
while the second isomorphism in~\eqref{eq:og-pq} follows from Lemma~\ref{lm:lambda-l2}.

\subsection{Vanishing} We first show that up to degree $k-2$ Hochschild cohomology
of $\OGr(k, V)$ is Hochschild global. That is, we are going to prove the following.

\begin{proposition}\label{pr:og-hh0}
    For any integer $0\leq l \leq k-2$ one has $\HH^l(\OGr(k, V))=H^0(\OGr(k, V),\Lambda^j\CT)$.
\end{proposition}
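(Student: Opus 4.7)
The plan is to mirror the proof of Proposition~\ref{pr:ig-hh0}, replacing filtration~\eqref{eq:ilambda} with its orthogonal analogue~\eqref{eq:olambda} and the set $\RB_{2q}$ with $\DB_{2q}$. By the definition of Hochschild cohomology, Proposition~\ref{pr:og-hh0} reduces to showing
\[
    H^i(\OGr(k, V), \Lambda^j\CT) = 0
    \qquad \text{whenever } i > 0 \text{ and } i + j \leq k-2.
\]
To establish this, I would study the spectral sequence~\eqref{eq:ss} associated with filtration~\eqref{eq:olambda}, whose first page in the $i$-th row is
\[
    E^{q,i}_1 = H^i\bigl(\OGr(k, V),\, \Lambda^p((\CUpU)\otimes\CU^*)\otimes \Lambda^q\Lambda^2\CU^*\bigr),
\]
with $p = j - q$, and show that every term in this row vanishes under the above hypotheses.

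The argument follows the same three-step structure as in the symplectic case. First, use decomposition~\eqref{eq:og-pq} to split each graded piece into direct summands of the form $\Sigma^\lambda(\CUpU) \otimes \Sigma^{\lambda^T}\CU^* \otimes \Sigma^\mu\CU^*$ with $|\lambda| = p$ and $\mu \in \DB_{2q}$. Second, apply Lemma~\ref{lm:res} to $\Sigma^\lambda(\CUpU)$ and the Littlewood--Richardson rule to $\Sigma^{\lambda^T}\CU^* \otimes \Sigma^\mu\CU^*$, reducing each summand to a direct sum of bundles of the form $(\CUpU)^{\langle \beta \rangle} \otimes \Sigma^\alpha\CU^*$. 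Third, invoke Lemma~\ref{lm:iab0} to conclude vanishing of the corresponding cohomology group.

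The key quantitative input is the height bound on $\alpha$. By Lemma~\ref{lm:lr-h} together with the trivial bound $\dht(\lambda^T) \leq p$, we have $\dht(\alpha) \leq p + \dht(\mu)$; and from the discussion following~\eqref{eq:balanced} one has $\dht(\mu) \leq q + 1$ for $\mu \in \DB_{2q}$. Hence $\dht(\alpha) \leq p + q + 1 = j + 1$. Since $j \leq k - 2$, this gives $\dht(\alpha) \leq j + 1 \leq k - 1 < k$, so Lemma~\ref{lm:iab0} applied with its parameter set to $j + 1$ yields $H^i(\OGr(k,V), (\CUpU)^{\langle\beta\rangle}\otimes\Sigma^\alpha\CU^*) = 0$ for all $0 < i < k - j - 1$, which is exactly the range $i + j \leq k - 2$ we need.

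The main subtlety, compared with the symplectic situation, is the extra ``$+1$'' in the height bound for $\DB_{2q}$ versus $\RB_{2q}$ (attained for $q \geq 1$ by the unique diagram $(q \mid q + 1)$). This is the structural reason why the vanishing range is one degree shorter than in Proposition~\ref{pr:ig-hh0}, giving Hochschild globality of $\OGr(k,V)$ only up to degree $k - 2$ rather than $k - 1$. Beyond this careful bookkeeping I do not anticipate any serious obstacle; the exceptional $q = 0$ case (where $\mu = \emptyset$ has height $0$ rather than the generic bound $1$) only improves the estimate to $\dht(\alpha) \leq j$, which strengthens the conclusion and is compatible with the uniform bound used above.
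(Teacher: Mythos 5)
Your proposal is correct and follows essentially the same route as the paper: the same spectral sequence for filtration~\eqref{eq:olambda} with decomposition~\eqref{eq:og-pq}, the same height bound $\dht(\alpha)\leq p+\dht(\mu)\leq j+1$ coming from $\dht(\mu)\leq q+1$ for $\mu\in\DB_{2q}$, and the same application of Lemma~\ref{lm:iab0} to get vanishing in the range $0<i<k-j-1$, i.e.\ $i+j\leq k-2$. The only cosmetic difference is your explicit invocation of Lemma~\ref{lm:res}, which the paper's argument does not need at this point since only the bound on $\dht(\alpha)$ matters.
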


Recall that
\[
    \HH^l(\OGr(k, V)) = \bigoplus_{j=0}^{l} H^{l-j}(\OGr(k, V), \Lambda^j\CT).
\]
In order to prove Proposition~\ref{pr:ig-hh0},
for any integer $0\leq l \leq k-2$ and any $0\leq j < l$ we show
that $H^{l-j}(\IGr(k, V), \Lambda^j\CT)=0$
by studying cohomology
spectral sequence~\eqref{eq:ss} for the filtration with the associated subquotients~\eqref{eq:olambda}.

\begin{lemma}\label{lm:og-ijk}
    Consider integers $i > 0$ and $j\geq 0$ such that $i+j\leq k-2$.
    Then all the terms in the $i$-th row $E^{\bullet, i}_1$ of
    spectral sequence~\eqref{eq:ss} associated with the filtration with subquotients~\eqref{eq:olambda}
    vanish. In particular, one has $H^i(\IGr(k, V),\Lambda^j\CT) = 0$.
\end{lemma}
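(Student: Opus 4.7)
The plan is to mirror the proof of Lemma~\ref{lm:ig-ijk} almost verbatim, with one crucial numerical adjustment coming from the replacement of right-balanced diagrams by down-balanced diagrams in the decomposition of $\Lambda^q\Lambda^2\CU^*$. Recall that in the symplectic case $\mu\in\RB_{2q}$ satisfies $\dht(\mu)\leq q$, whereas here $\mu\in\DB_{2q}$ only satisfies $\dht(\mu)\leq q+1$. It is exactly this shift by one that accounts for the weaker hypothesis $i+j\leq k-2$ compared to $i+j<k$ in the symplectic setting.

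First, I would fix $i>0$ and $j\geq 0$ with $i+j\leq k-2$, let $0\leq q\leq j$, and set $p=j-q$. Using decomposition~\eqref{eq:og-pq}, it suffices to show vanishing of $H^i(\OGr(k,V),\Sigma^\lambda(\CUpU)\otimes\Sigma^{\lambda^T}\CU^*\otimes\Sigma^\mu\CU^*)$ for every $\lambda$ with $|\lambda|=p$ and every $\mu\in\DB_{2q}$. I decompose such a bundle as a direct sum of equivariant summands of the form $(\CUpU)^{\langle\beta\rangle}\otimes\Sigma^\alpha\CU^*$, where $\Sigma^\alpha\CU^*$ is an irreducible constituent of $\Sigma^{\lambda^T}\CU^*\otimes\Sigma^\mu\CU^*$.

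Next, I would estimate $\dht(\alpha)$. By Lemma~\ref{lm:lr-h},
\[
\dht(\alpha)\leq \dht(\lambda^T)+\dht(\mu)\leq p+(q+1)=j+1,
\]
using $\dht(\lambda^T)\leq |\lambda|=p$ together with the bound $\dht(\mu)\leq q+1$ recalled above. Since we assume $i+j\leq k-2$, in particular $j\leq k-2$, so $j+1\leq k-1<k$; thus Lemma~\ref{lm:iab0} applies with the height parameter $j+1$, and yields vanishing of $H^i(\OGr(k,V),(\CUpU)^{\langle\beta\rangle}\otimes\Sigma^\alpha\CU^*)$ for every $0<i<k-(j+1)$, which is exactly the range $i+j\leq k-2$ we need. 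Summing over all summands proves that $E^{q,i}_1=0$ for all $q$, and the last statement follows from convergence of the spectral sequence.

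I expect no real obstacle here; the argument is a mechanical transcription of the symplectic case, and the only thing to be careful about is using the correct bound $\dht(\mu)\leq q+1$ for $\mu\in\DB_{2q}$. The extra unit in this bound, coming from the exceptional shape $(r\mid r+1)$, is precisely what forces the threshold to drop from $i+j<k$ to $i+j\leq k-2$; this is also the reason why, as the paper flags earlier, the behavior of $\DB_0$ (versus $\RB_0$) plays a distinguished role in the orthogonal setting.
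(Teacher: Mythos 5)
Your proof is correct and follows essentially the same route as the paper: decompose via~\eqref{eq:og-pq}, bound $\dht(\alpha)\leq \dht(\lambda^T)+\dht(\mu)\leq p+(q+1)=j+1$ using Lemma~\ref{lm:lr-h} and the down-balanced bound, and then apply Lemma~\ref{lm:iab0} to get vanishing for $0<i<k-(j+1)$, which is exactly the range $i+j\leq k-2$. Your observation that the shift from $\RB_{2q}$ to $\DB_{2q}$ is what weakens the threshold relative to Lemma~\ref{lm:ig-ijk} matches the paper's reasoning precisely.
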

\begin{proof}
    Consider decomposition~\eqref{eq:og-pq}.
    Let us show that for any $p,q\geq 0$ such that $p+q=j$,
    for any $\lambda$ with $|\lambda|=p$, and for any $\mu\in\mathrm{DB}_q$ one has
    \begin{equation*}
        H^i(\OGr(k, V),\Sigma^\lambda(\CUpU)\otimes \Sigma^{\lambda^T}\CU^*\otimes\Sigma^\mu\CU^*) = 0
    \end{equation*}
    if $0 < i < k-j-1$.
    Let $(\CUpU)^{\langle \beta \rangle}\subset \Sigma^\lambda(\CUpU)$
    and $\Sigma^\alpha\CU^*\subset \Sigma^{\lambda^T}\CU^*\otimes\Sigma^\mu\CU^*$
    be irreducible summands. 
    Then $(\CUpU)^{\langle \beta \rangle}\otimes \Sigma^\alpha\CU^*$ is an irreducible
    summand in $\Sigma^\lambda(\CUpU)\otimes \Sigma^{\lambda^T}\CU^*\otimes\Sigma^\mu\CU^*$.
    From Lemma~\ref{lm:lr-h} we know that  $\dht (\alpha)\leq \dht (\lambda^T) + \dht (\mu)\leq p + (q+1) = j+1$.
    Thus, the conditions of Lemma~\ref{lm:iab0} are satisfied, and
    $H^i(\OGr(k, V),(\CUpU)^{\langle \beta \rangle}\otimes \Sigma^\alpha\CU^*)=0$,
    which completes the proof.
\end{proof}

\begin{proof}[Proof of Proposition~\ref{pr:og-hh0}]
    Follows immediately from Lemma~\ref{lm:og-ijk}.
\end{proof}

The following statement is an analogue, in the orthogonal case, of Lemma~\ref{lm:ig-jk}.
\begin{lemma}\label{lm:og-jk}
    For any integer $j\geq 0$ such that $2j < k$,
    one has $H^i(\IGr(k, V),\Lambda^j\CT) = 0$ for all $i > 0$.
\end{lemma}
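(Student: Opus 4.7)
The plan is to adapt the proof of Lemma~\ref{lm:ig-jk} almost verbatim, using filtration~\eqref{eq:olambda} and decomposition~\eqref{eq:og-pq} in place of their symplectic analogues. The only meaningful change is that summands $\mu$ now range over $\DB_{2q}$ rather than $\RB_{2q}$, and for down-balanced diagrams one has $\dht(\mu)\leq q+1$ rather than $\dht(\mu)\leq q$. This loss of one in the height bound is precisely what forces the hypothesis to be strengthened from $2j\leq k$ (as in the symplectic case) to $2j<k$.

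Concretely, I would consider the spectral sequence \eqref{eq:ss} attached to the filtration whose subquotients are \eqref{eq:olambda}, and prove that its $i$-th row vanishes for every $i>0$ under the hypothesis $2j<k$. For this, it suffices to check that every irreducible equivariant summand of the form $(\CUpU)^{\langle\beta\rangle}\otimes \Sigma^\alpha\CU^*$ appearing in
\[
    \Sigma^\lambda(\CUpU)\otimes \Sigma^{\lambda^T}\CU^*\otimes \Sigma^\mu\CU^*,
    \qquad |\lambda|=p,\ \mu\in \DB_{2q},\ p+q=j,
\]
is acyclic in positive degrees. By Lemma~\ref{lm:res}\ref{lm:res-i1} we have $\beta\subseteq \lambda$, hence $\dwd(\beta)\leq \dwd(\lambda)\leq p$. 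By Lemma~\ref{lm:lr-h} and the bound $\dht(\mu)\leq q+1$, we obtain $\dht(\alpha)\leq \dht(\lambda^T)+\dht(\mu)\leq p+(q+1)=j+1$.

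Assuming now that $H^i\bigl(\OGr(k,V),(\CUpU)^{\langle\beta\rangle}\otimes \Sigma^\alpha\CU^*\bigr)\neq 0$ for some $i>0$, the contrapositive of Lemma~\ref{lm:iab00} (together with the case $\beta=0$ already yielding no higher cohomology) forces $\dht(\alpha)+\dwd(\beta)>k$. Substituting the two bounds above gives $(j+1)+p>k$, i.e.\ $p\geq k-j$. Combined with $p\leq j$, this yields $k\leq 2j$, contradicting the hypothesis $2j<k$. Therefore the whole $i$-th row of the spectral sequence vanishes for all $i>0$, and consequently $H^i(\OGr(k,V),\Lambda^j\CT)=0$ for all $i>0$.

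There is no real obstacle here; the argument is mechanical once one tracks the height of $\mu$ carefully. The only thing worth noting is the degenerate case $q=0$, where $\mu$ is empty and $\dht(\mu)=0$ rather than $1$, mentioned in the discussion following~\eqref{eq:balanced}; this case is strictly better than the uniform bound $\dht(\mu)\leq q+1$ used above, so the estimate $\dht(\alpha)\leq j+1$ is valid across all $q$ and the argument goes through uniformly.
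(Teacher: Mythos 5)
Your proof is correct and follows essentially the same route as the paper: the same filtration and decomposition~\eqref{eq:og-pq}, the bound $\dwd(\beta)\leq\dwd(\lambda)\leq p$ from Lemma~\ref{lm:res}, the height bound $\dht(\alpha)\leq p+q+1=j+1$ via $\dht(\mu)\leq q+1$ for $\mu\in\DB_{2q}$, and the contradiction $2j\geq k$ obtained from the contrapositive of the Borel--Bott--Weil vanishing lemma. The remark about the degenerate case $q=0$ is harmless but unnecessary, since the uniform bound suffices.
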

\begin{proof}
    We follow the proof of Lemma~\ref{lm:ig-ijk} and consider the same filtration.
    Let $(\CUpU)^{\langle \beta\rangle}\otimes \Sigma^\alpha\CU^*$ be an~irreducible
    summand in $\Sigma^\lambda(\CUpU)\otimes \Sigma^{\lambda^T}\CU^*\otimes\Sigma^\mu\CU^*$.
    By Lemma~\ref{lm:res}, $\beta_1\leq \lambda_1\leq p$.
    Since $\mu\in\DB_q$, $\dht(\mu)\leq q+1$. Thus,
    $\dht (\alpha)\leq \dht (\lambda^T) + \dht (\mu)\leq p + q + 1 = j+1$.
    If $H^i(\IGr(k, V), (\CUpU)^{\langle \beta\rangle}\otimes \Sigma^\alpha\CU^*)\neq 0$ for some $i>0$,
    then it follows from Lemma~\ref{lm:iab0} that $\dht(\alpha)+\beta_1 > k$.
    In particular, $j+1+p > k$, which implies $p \geq k-j$. Since $p \leq j$,
    one must have $2j \geq k$, which contradicts our assumptions.
\end{proof}

\subsection{Cohomology group \texorpdfstring{$\HH^{2k}(\OGr(k, V))$}{2k-th cohomology}}

We are going to show that nonspecial orthogonal Grassmannians fail to be Hochschild global
by studying their $2k$-th Hochschild cohomology groups.

In order to formulate the following results in a uniform manner, we introduce the following
notation.
In type $B_n$ denote
\[
    \omegat_i = \begin{cases}
        \omega_i & \text{if } i=0,\ldots, n-1, \\
        2\omega_n & \text{if } i=n.
    \end{cases}
\]
In type $D_n$ denote
\[
    \omegat_i = \begin{cases}
        \omega_i & \text{if } i=0,\ldots, n-2, \\
        \omega_{n-1} + \omega_n & \text{if } i=n-1, \\
        2\omega_n & \text{if } i=n.
    \end{cases}
\]
Considering our embedding of $\YD_n$ in the dominant cone $\rmP^+_{\sfG}$,
the weight $\omegat_i$ simply equals
\[ \omegat_i = (\underbrace{1, 1, \ldots, 1}_{k \text{ times}}, 0, \ldots, 0).\]

\begin{proposition}\label{prop:ogr-k-2}
    Let $X=\OGr(k, V)$ be nonspecial. Then $V^{\langle 2\omega_{k+2} \rangle}\subset H^{k-2}(X, \Lambda^{k+2}\CT)$.
\end{proposition}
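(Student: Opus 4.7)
The plan is to exhibit $W = V^{\langle 2\omega_{k+2}\rangle}$ as a summand of $H^{k-2}(X, \Lambda^{k+2}\CT)$ by isolating a specific contribution inside the $q=0$ piece of filtration~\eqref{eq:olambda} and controlling the obstruction from the higher pieces. Setting $\CF^0 = \Lambda^{k+2}((\CUpU) \otimes \CU^*)$ and $\CQ = \Lambda^{k+2}\CT/\CF^0$, the short exact sequence
\[
    0 \to \CF^0 \to \Lambda^{k+2}\CT \to \CQ \to 0
\]
yields a long exact sequence in cohomology that bounds the multiplicity of $W$ in $H^{k-2}(X, \Lambda^{k+2}\CT)$ from below by the difference of the multiplicities of $W$ in $H^{k-2}(X, \CF^0)$ and in $H^{k-3}(X, \CQ)$. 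I would show this difference is exactly $1$.

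To count the multiplicity of $W$ in $H^{k-2}(\CF^0)$, I would decompose $\CF^0 = \bigoplus_{|\lambda|=k+2} \Sigma^\lambda\CUpU \otimes \Sigma^{\lambda^T}\CU^*$ by Lemma~\ref{lm:lambda-t}, further decompose each $\Sigma^\lambda\CUpU$ via Lemma~\ref{lm:res}, and apply Theorem~\ref{thm:bbw}. Parametrize the admissible $(\alpha, \beta)$ by subsets $S \subset \{1, \ldots, n\}$ of size $k$ recording which components of the target weight $\rho + 2\omega_{k+2}$ sit in the $\alpha$-block of $\gamma = \rho + (\alpha, \beta)$. The requirement that $\gamma$ sorts to $\rho + 2\omega_{k+2}$ with exactly $k-2$ inversions pins down $S = [1, k+2]\setminus\{m_1, m_2\}$ with $3\leq m_1 < m_2\leq k+2$ and $m_1+m_2=k+5$; in every such case one finds $\alpha = (2^{m_1-1}, 1^{m_2-m_1-1})$ and $\lambda = \beta = (m_2-2, m_1-1)$, so Lemma~\ref{lm:res}(2) gives multiplicity exactly $1$. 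The total is $N_0 = \lfloor k/2 \rfloor$.

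To bound the multiplicity of $W$ in $H^{k-3}(\CQ)$, I would use the inherited filtration and its spectral sequence to get the upper bound $\sum_{q\geq 1}$ of the multiplicity of $W$ in $H^{k-3}$ of the $q$-th subquotient. The identity $|\alpha| = 2|S\cap [1, k+2]| - L$, where $L$ is the number of inversions (read off from Borel--Bott--Weil), combined with the piece constraints $|\alpha| = k+2+q$ and $L = k-3$, forces $q=1$ via the trivial bound $|S\cap[1,k+2]|\leq k$. The candidate pairs are $(m_1, m_2)$ with $4\leq m_1<m_2\leq k+2$ and $m_1+m_2=k+6$, each giving $\alpha = (2^{m_1-1}, 1^{m_2-m_1-1})$ and $\beta = (k+3-m_1, m_1-2)$. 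Applying Pieri's rule to $\Sigma^{\lambda^T}\CU^* \otimes \Lambda^2\CU^*$ yields up to three possible $\lambda$ (removing the vertical $2$-strip from two length-$2$ rows, from one row of each length, or from two length-$1$ rows), and Lemma~\ref{lm:res}(1) selects only the mixed-length outcome $\lambda = (m_2-3, m_1-2) = \beta$; this succeeds precisely when $m_2\geq m_1+2$. For the unique degenerate pair $m_2 = m_1+1$ (which arises only for odd $k$, at $m_1 = (k+5)/2$), the two prescribed boxes would land in the same row of $\alpha = (2^{m_1-1})$, and no Pieri outcome is compatible with $\beta\subset\lambda$; so it contributes nothing. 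A direct count gives $N_1 = N_0 - 1$, so the long exact sequence delivers multiplicity at least $1$, as required.

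The main technical obstacle is the combination of Pieri's rule with the necessary condition $\beta\subset\lambda$ of Lemma~\ref{lm:res}(1) in the analysis of $H^{k-3}(\CQ)$, and in particular the careful isolation of the exceptional degenerate pair for odd $k$: this degeneracy is exactly what forces the drop from $N_0$ to $N_1 = N_0 - 1$ and thereby yields one surviving copy of $W$ in $H^{k-2}(X, \Lambda^{k+2}\CT)$.
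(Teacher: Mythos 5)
Your proposal is correct and follows essentially the same route as the paper: you isolate the $p=k+2$, $q=0$ graded piece of filtration~\eqref{eq:olambda}, count the multiplicity of $V^{\langle 2\omega_{k+2}\rangle}$ there as $\lfloor k/2\rfloor$ via Borel--Bott--Weil together with Lemma~\ref{lm:res}, and then bound the possible cancellation coming from $H^{k-3}$ of the $q\geq 1$ pieces by $\lfloor (k-2)/2\rfloor$ using the same degree/size bookkeeping, Pieri's rule for $\otimes\Lambda^2\CU^*$, and the containment $\beta\subseteq\lambda$ from Lemma~\ref{lm:res}, exactly as in the paper's proof (which packages the BBW analysis as Lemma~\ref{lm:ogr-k-2} and phrases the comparison via the equivariant spectral sequence rather than your long exact sequence, a purely cosmetic difference). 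The only slight imprecision is your closing remark that the degenerate pair $m_2=m_1+1$ is what forces the drop to $N_1=N_0-1$: that pair exists only for odd $k$, while for even $k$ the drop is a parity effect in the count, as your own enumeration already shows.
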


Remark that the very statement of Proposition~\ref{prop:ogr-k-2} makes no sense for the curious Grassmannian
$\OGr(n-1, 2n+1)$, since $k=n-1$ implies $k+2>n$.

The following lemma is the key statement in our proof of Proposition~\ref{prop:ogr-k-2}.
Recall that $\epsilon$ was introduced in Remark~\ref{rm:eps}.

\begin{lemma}\label{lm:ogr-k-2}
    Let $X=\OGr(k, V)$ be nonspecial, let $a\geq b\geq 0$ be integers,
    and let $\alpha\in\YD_k$. Then
    \[
        H^l(X, (\CUpU)^{\langle a, b \rangle}\otimes \Sigma^\alpha\CU^*) =
        V^{\langle 2\omega_{k+2} \rangle}
    \]
    if and only if $a \leq k+2$, $b \geq 2$, $\alpha^T = (k+2-b, k+2-a)$, and $l = a+b-4$.
\end{lemma}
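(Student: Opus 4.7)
The plan is to apply Borel--Bott--Weil (Theorem~\ref{thm:bbw}) directly. Writing $\beta = (a, b, 0, \ldots, 0)$, the sequence $\gamma = \rho + (\alpha, \beta)$ has entries
\[
\gamma_i = n+1-i-\epsilon + \alpha_i \ \ (1\leq i \leq k), \quad \gamma_{k+1} = n+a-k-\epsilon, \quad \gamma_{k+2} = n+b-k-1-\epsilon,
\]
and $\gamma_i = n+1-i-\epsilon$ for $i \geq k+3$. The target strictly dominant weight $\rho + 2\omega_{k+2}$ has entries $n+3-i-\epsilon$ for $i\leq k+2$ and $n+1-i-\epsilon$ for $i\geq k+3$, so it takes exactly the values
\[
T \;=\; \{n+2-\epsilon,\, n+1-\epsilon,\, \ldots,\, n-k+1-\epsilon\} \,\sqcup\, \{n-k-2-\epsilon,\, \ldots,\, 1-\epsilon\},
\]
the two values $n-k-\epsilon$ and $n-k-1-\epsilon$ being absent. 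Since $k \leq n-2$, every $\gamma_i$ is non-negative (and the only possible zero, $\gamma_n$ in type $D$, matches the zero of $T$), so no Weyl-group sign flip is ever needed and it suffices to find a permutation of $\gamma$ equal to $\rho + 2\omega_{k+2}$.

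For necessity, the entries $\gamma_{k+3}, \ldots, \gamma_n$ automatically exhaust the bottom block of $T$, so the condition reduces to $\{\gamma_1, \ldots, \gamma_{k+2}\} = \{n-k+1-\epsilon, \ldots, n+2-\epsilon\}$ as multisets. For $\gamma_{k+1}$ and $\gamma_{k+2}$ to land in this top block and avoid the forbidden values, one must have $1 \leq a \leq k+2$ and $b \geq 2$. Setting $v_j := n+3-j-\epsilon$, write $\gamma_i = v_{m_i}$ with $m_i := i+2-\alpha_i$; the partition condition $\alpha_i \geq \alpha_{i+1}$ is equivalent to $m_{i+1} \geq m_i + 1$, so $(m_1, \ldots, m_k)$ is automatically the increasing enumeration of a $k$-subset of $\{1, \ldots, k+2\}$. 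Requiring this subset to be the complement of $\{k+3-a,\,k+4-b\}$ pins down $\alpha$ uniquely as the partition with $\alpha_i = 2$ for $i \leq k+2-a$, $\alpha_i = 1$ for $k+3-a \leq i \leq k+2-b$, and $\alpha_i = 0$ otherwise --- i.e., $\alpha^T = (k+2-b,\, k+2-a)$.

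For sufficiency and the value of $l$, the same computation shows that under the stated conditions $\gamma$ is non-singular and sorts to $\rho + 2\omega_{k+2}$. The cohomological degree equals the number of ascents in $\gamma$, i.e., pairs $(i, j)$ with $i < j$ and $\gamma_i < \gamma_j$. Within each of the three descending runs of $\gamma_1, \ldots, \gamma_k$ (where $\alpha_i$ is constantly $2$, $1$, or $0$) and within $\gamma_{k+3}, \ldots, \gamma_n$ there are no inversions, and there are none between consecutive runs. All inversions come from $\gamma_{k+1}$ exceeding the entries at positions $k+3-a, \ldots, k$ (yielding $a-2$) and from $\gamma_{k+2}$ exceeding the entries at positions $k+3-b, \ldots, k$ (yielding $b-2$); hence $\ell = a+b-4$.

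The one bookkeeping obstacle is ensuring that no exotic configurations are missed --- singular $\gamma$ with coincidences other than those ruled out by the multiset match, or in type $D$ rearrangements requiring a nonzero number of sign flips. Both are eliminated by the observation that all $\gamma_i$ are non-negative throughout the parameter range, which reduces the entire analysis to the clean combinatorial identification of $\alpha$ from the missing-index pair $(k+3-a,\, k+4-b)$.
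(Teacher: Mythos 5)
Your proposal is correct and follows essentially the same route as the paper: a direct Borel--Bott--Weil analysis of $\rho+(\alpha,\beta)$, forcing the multiset of its entries to match that of $\rho+2\omega_{k+2}$ (which yields $1\leq a\leq k+2$, $b\geq 2$, and pins down $\alpha^T=(k+2-b,k+2-a)$), and then counting inversions to get $\ell=a+b-4$. The only difference is cosmetic: you make explicit the bookkeeping (the index map $m_i=i+2-\alpha_i$ and the non-negativity of all entries ruling out sign flips in types $B$/$D$) that the paper leaves implicit.
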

\begin{proof}
    Assume $H^l(X, (\CUpU)^{\langle a, b \rangle}\otimes \Sigma^\alpha\CU^*) = V^{\langle 2\omega_{k+2} \rangle}$.
    By the Borel--Bott--Weil theorem, the elements of the sequence
    \begin{equation}\label{eq:ogr-k-2}
        (n-\epsilon+\alpha_1, n-1-\epsilon+\alpha_2, \ldots, n-k+1-\epsilon+\alpha_1, n-k-\epsilon+a, n-k-1-\epsilon+b)
    \end{equation}
    are all distinct and, once put in decreasing order, form the sequence
    \[
        (n+2-\epsilon, n+1-\epsilon, \ldots, n-k+1-\epsilon).
    \]
    Thus, $n-k-\epsilon+a\leq n+2-\epsilon$, which implies $a\leq k+2$, and $n-k-1-\epsilon+b\geq n-k+1-\epsilon$,
    which implies $b\geq 2$.
    Since the first $k$ elements in~\eqref{eq:ogr-k-2} are strictly decreasing,
    they must form the sequence
    \[
        (n+2, n+1, \ldots, \widehat{n-k+a}, \ldots, \widehat{n-k-1+b}, \ldots, n-k+1).
    \]
    Thus,
    \[
    \alpha_i = 
    \begin{cases}
        2  & \text{for } i=1,\ldots,k+2-a, \\
        1  & \text{for } i=k+3-a,\ldots,k+2-b, \\
        0  & \text{otherwise}.
    \end{cases}
    \]
    We conclude that $\alpha^T=(k+2-b, k+2-a)$. Finally,
    for such $\alpha$ all the elements in~\ref{eq:ogr-k-2}
    are as prescribed by the Borel--Bott--Weil theorem, and
    the number of inversions equals precisely $a+b-4$, which ends the proof.
\end{proof}

\begin{proof}[Proof of Proposition~\ref{prop:ogr-k-2}]
    Consider spectral sequence~\eqref{eq:ss} associated with filtration~\eqref{eq:olambda}
    on $\Lambda^{k+2}\CT$. Let us show that
    $V^{\langle 2\omega_{k+2} \rangle}\subset E^{k-2, 0}_\infty$.

    First, we compute the multiplicity of $V^{\langle 2\omega_{k+2} \rangle}$ in
    \[
        E^{k-2, 0}_2 = \bigoplus_{|\lambda| = k+2}H^{k-2}(X, \Sigma^\lambda(\CUpU)\otimes \Sigma^{\lambda^T}\CU^*).
    \]
    Let $\lambda$ be such that $|\lambda|=k+2$. Let $(\CUpU)^{\langle\beta\rangle}\subset\Sigma^\lambda(\CUpU)$
    be an irreducible summand such that
    $H^{k-2}(X, (\CUpU)^{\langle\beta\rangle}\otimes \Sigma^{\lambda^T}\CU^*) = V^{\langle 2\omega_{k+2} \rangle} = V^{\langle 2^{k+2}\rangle}$,
    where $2^h = (h, h)^T$.
    From the Borel--Bott--Weil theorem we see that $\left(\lambda^T\right)_1\leq 2$,
    so $\dht(\lambda)=\dwd(\lambda^T)\leq 2$.
    Again, from Borel--Bott--Weil one easily gets $|\beta|+|\lambda^T| = |2^{k+2}| = 2(k+2)$.
    Here, for $\beta\in\YD_k$ we still put $|\beta|=\sum_{i=1}^k \beta_i$ even if $\beta_k < 0$.
    From Lemma~\ref{lm:res}, we conclude that $\beta=\lambda$.
    In the same lemma we showed that $(\CUpU)^{\langle\lambda\rangle}$ appears in $\Sigma^\lambda(\CUpU)$
    with multiplicity $1$.
    Finally, assume $\lambda=(a, b)$, where $a+b=k+2$,
    is such that
    $H^{k-2}(X, (\CUpU)^{\langle\lambda\rangle}\otimes \Sigma^{\lambda^T}\CU^*) = V^{\langle 2\omega_{k+2} \rangle}$.
    By Lemma~\ref{lm:ogr-k-2}, the only restriction on $\lambda$ is that $b\geq 2$. Thus, the multiplicity of $V^{\langle 2\omega_{k+2} \rangle}$
    in $E^{k-2,0}_2$ equals the number of integer pairs $a\geq b\geq 2$ with $a+b=k+2$,
    which equals $\lfloor \frac{k}{2} \rfloor$.

    Next, we show that the sum of multiplicities of $V^{\langle 2\omega_{k+2} \rangle}$
    in $E^{k-3,q}_2$ for all $q>0$ is strictly smaller than $\lfloor \frac{k-1}{2} \rfloor$.
    Since the spectral sequence is equivariant, this will show that the multiplicity
    of $V^{\langle 2\omega_{k+2} \rangle}$ in $E^{k-2,0}_\infty$ is strictly positive.
    Let $q>0$. Then
    \[
        E^{k-3, q}_2 = \bigoplus_{\substack{|\lambda|=k+2-q,\\ \mu \in \DB_{2q}}}H^{k-3}(X, \Sigma^\lambda(\CUpU)\otimes \Sigma^{\lambda^T}\CU^*\otimes\Sigma^{\mu}\CU^*).
    \]
    Fix a pair $\lambda$, $|\lambda|=k+2-q$, and $\mu\in\DB_{2q}$.
    Let $(\CUpU)^{\langle\beta\rangle}\subset\Sigma^\lambda(\CUpU)$
    and $\Sigma^\alpha\CU^*\subset\Sigma^{\lambda^T}\CU^*\otimes\Sigma^{\mu}\CU^*$
    be irreducible summands such that
    $H^{k-3}(X, (\CUpU)^{\langle\beta\rangle}\otimes \Sigma^{\alpha}\CU^*) = V^{\langle 2\omega_{k+2} \rangle} = V^{\langle 2^{k+2}\rangle}$.
    A simple consequence of the Littlewood--Richardson rule is that
    $\alpha_1\leq \mu_1+\left(\lambda^T\right)_1$.
    From the Borel--Bott--Weil theorem we know that
    $\alpha_1\leq 2$, so $\dht(\lambda)=\dwd(\lambda^T)=\left(\lambda^T\right)_1\leq 2$,
    Again, from Borel--Bott--Weil $|\beta|+|\alpha| = |2^{k+2}| = 2(k+2)$.
    Since $|\alpha|=|\mu|+|\lambda^T| = 2q+(k+2-q)=k+q+2$ and
    $|\beta|\leq |\lambda| = k+2-q$ (see Lemma~\ref{lm:res}), we conclude that $|\beta|=|\lambda|$.
    By Lemma~\ref{lm:res}, the latter implies that $\beta=\lambda$.
    Recall that, by the same lemma, $(\CUpU)^{\langle\lambda\rangle}$ appears in $\Sigma^\lambda(\CUpU)$
    with multiplicity $1$.
    
    Assume that $\lambda=(a, b)$. By Lemma~\ref{lm:ogr-k-2},
    if $H^{k-3}(X, (\CUpU)^{\langle\lambda\rangle}\otimes \Sigma^{\alpha}\CU^*) = V^{\langle 2\omega_{k+2} \rangle}$,
    then $k\geq a\geq b\geq 2$, $k-3=a+b-4$, and $\alpha^T=(k+2-b, k+2-a)$. In particular, $a+b=|\lambda| = k+1$. Thus, $q=1$,
    and $\Sigma^\alpha\CU^*\subset \Sigma^{(a, b)^T}\CU^*\otimes \Lambda^2\CU^*$.
    Since $a+b=k+1$, $\alpha^T = (a+1, b+1)$. By Pieri's rule
    $\Sigma^{(a+1, b+1)^T}\CU^*$ appears with multiplicity $1$ in $\Sigma^{(a, b)^T}\CU^*\otimes \Lambda^2\CU^*$
    if $a > b$, and does not appear at all if $a=b$.
    Thus, $V^{\langle 2\omega_{k+2} \rangle}$ appears in $E^{k-3,1}_2$
    with multiplicity equal to the number of pairs $a > b \geq 2$ such that
    $a+b=k+1$, which is $\lfloor \frac{k-2}{2} \rfloor$, and does
    not appear in $E^{k-3,q}_2$ for $q\neq 0$.
    Since $\lfloor \frac{k-2}{2} \rfloor < \lfloor \frac{k}{2} \rfloor$,
    the statement follows.
\end{proof}

\printbibliography

\end{document}